\renewcommand{\Re}{\operatorname{Re}}
\newcommand{\C}{\mathbb{C}}
\newcommand{\R}{\mathbb{R}}
\DeclareMathOperator{\diff}{d\!}
\DeclareMathOperator{\supp}{supp}
\newcommand{\Tr}{\operatorname{Tr}}
\newtheorem{lemma}{Lemma}[section]
\newtheorem{thm}{Theorem}[section]
\newtheorem{prop}{Proposition}[section]
\newtheorem{cor}{Corollary}[section]
\theoremstyle{remark}
\newtheorem*{rmk}{Remark}
\begin{document}
\author{María Ángeles García-Ferrero}
\address{Instituto de Ciencias Matem\'aticas CSIC-UAM-UC3M-UCM, c/ Nicol\'as Cabrera 13--15,
28049 Madrid, Spain}
\email{\href{mailto:garciaferrero@icmat.es}{\texttt{garciaferrero@icmat.es}}}
\author{Joaquim Ortega-Cerdà}
\address{Dept.\ Matem\`atica i Inform\`atica,
 Universitat  de Barcelona,
Gran Via 585, 08007 Bar\-ce\-lo\-na, Spain and
CRM, Centre de Recerca Matemàtica, Campus de Bellaterra Edifici C, 08193 
Bellaterra, Barcelona, Spain}
\email{\href{mailto:jortega@ub.edu}{\texttt{jortega@ub.edu}}}

\title{Stability of the concentration inequality on polynomials}

\thanks{MAGF has been partially supported by the grants CEX2023-001347-S, PID2021-125021NAI00, PID2021-124195NB-C32, PID2021-122154NB-I00 and PID2021-122156NB-I00, funded by MCIN/AEI/10.13039/501100011033.
JOC has been partially supported by grants  
PID2021-123405NB-I00, MDM-2014-0445 by the Spanish Research Agency (AEI) 
and by the Departament de Recerca i Universitats, grant 2021 SGR 00087.}

\begin{abstract}
 In this paper, we study the stability of the concentration inequality for one-dimensional complex polynomials.  We provide the stability of the local concentration inequality and a global version using a Wehrl-type entropy. 
\end{abstract}

\maketitle

\section{Introduction}
\label{sec:intro}
The Paley--Wiener space consists of square integrable functions $f\in L^2(\R)$ that are 
band-limited, i.e. $\supp \hat f \subset [-\pi/2,\pi/2]$.
The well-known Donoho--Stark conjecture \cite{DonSta89} states that, among all 
functions $f$ in the Paley--Wiener space and all measurable subsets 
$\Omega$ of the real line with fixed Lebesgue measure $|\Omega|= \ell$, the concentration 
operator 
\[C_\Omega(f) := \frac{\int_{\Omega} |f(x)|^2\, dx}{\int_{\R} |f(x)|^2\, dx  }\]
achieves a maximum when $\Omega$ is an interval, i.e. $\Omega=[a-\ell/2,a+\ell/2]$.
The conjecture has been proved in \cite{DonSta93}  provided $\ell<\frac{0.8}{\pi}$, but the general case remains an open conjecture. 

A natural finite-dimensional analogous 
problem is to replace the band-limited functions with polynomials of  bounded 
degree endowed with a suitable $L^2$ norm.
Let us consider the space $\mathcal P_N$  of polynomials 
of degree less than or equal to $N$. 
If $z= x+i y\in \C$,  $dm(z) = \frac{dx\wedge dy}{\pi (1+|z|^2)^2}$ 
defines a probability measure on $\C$,  which is the push-forward of the normalized Lebesgue measure in the sphere of radius $\frac{1}{2\sqrt{\pi}}$ in $\R^3$  by the stereographic projection.
We endow $\mathcal P_N$ with the Hermitian product given by
\[
 \langle P, Q \rangle_{N}  \coloneqq (N+1)\int_{\C} 
\frac{P(z)\overline{Q(z)}}{(1+|z|^2)^{N}} 
dm(z), \quad P, Q\in\mathcal P_N.
\]
The corresponding norm for $P\in \mathcal P_N$ is  defined as
\[
 \|P\|_{N}^2\coloneqq (N+1)\int_{\C} \frac{|P(z)|^2}{(1+|z|^2)^{N}} 
dm(z).
\]
The normalizing factor $(N+1)$ is chosen so that $\|1\|_N = 1$.

We define the concentration operator in $\mathcal P_N$ for any measurable set $\Omega\subset \C$ and any $P\in \mathcal P_N$  as
\[
C_{N,\Omega}(P) := \frac{(N+1)\int_{\Omega} \frac{|P(z)|^2}{(1+|z|^2)^{N}} 
dm(z)}{\|P\|^2_{N}}.
\]
The problem analogous to the one in the Donoho--Stark conjecture is to find the maximum of 
$\sup_{P\in \mathcal P_N}C_{N,\Omega}(P)$ among all measurable sets $\Omega\subset \C$ such 
that $m(\Omega) = \ell$. This was accomplished in \cite{KNOCT} and \cite{Frank23}. The 
maximum is achieved when $\Omega$ is the disc centered at the origin of measure $\ell$ 
and $P$ is a constant, i.e.
\begin{align}\label{eq:qualitative}
    C_{N,\Omega}(P)\leq  C_{N,\Omega^*}(1),
\end{align}
where $\Omega^*$ is the disc centered at the origin with $m(\Omega^*)=m(\Omega)=\ell$. 
All the other extremal sets $\Omega$ are discs (with respect to the chordal distance,  which are also Euclidean discs on the plane) of measure $\ell$, for which $C_{N,\Omega}(P)$ achieves the maximum when $P$ is a multiple of the reproducing kernel, which we describe below, at the center 
of the disc (the chordal disc center, not the Euclidean center).

Let $e_n(z)\coloneqq\sqrt{\binom{N}{n}}z^n$ for $n=0,1,\dots, N$. With the normalization of the Hermitian product that we have picked, $ 
\{e_n(z)\}_{n=0}^N$ is an orthonormal 
basis of $\mathcal P_N$.
Therefore, $\mathcal P_N$ endowed with
the Hermitian product is a reproducing kernel Hilbert space with kernel 
$$k_N(z,\zeta) = \sum_{n= 0}^N e_n(z)\overline{e_n(\zeta)} = (1+z\bar \zeta)^N.$$  
Thus, for all $P\in \mathcal P_N$ and all
$z\in \C$
$$P(z) = \langle P, k_N(\cdot,z)\rangle_{N}=(N+1)\int_{\mathbb C} \frac{(1+z\bar\zeta)^N P(\zeta)}{(1+|\zeta|^2)^N}\, 
dm(\zeta).$$
By the extremal property of the reproducing kernel we have that
$$\sup_{z\in \mathbb C} \frac{|P(z)|^2}{(1+|z|^2)^N} \le \|P\|_{N}^2.$$
We finally denote by $\kappa_{N,\zeta}$ the normalized reproducing kernels, given by 
\begin{align*}
    \kappa_{N,\zeta}(z) =\frac{k_N(z,\zeta)}{\|k_N(\cdot,\zeta)\|_{N}}= \frac{(1+ z\bar \zeta)^N}{(1+|\zeta|^2)^{N/2}}.
\end{align*}

Our aim is to study the stability of the concentration inequality \eqref{eq:qualitative}. That is, we 
want to prove that whenever we have a measurable set 
$\Omega\subset \mathbb C$ and a polynomial $P\in \mathcal P_N$ with norm one, if the concentration $C_{N,\Omega}(P)$  is close to the 
maximal among all sets of  measure $m(\Omega)$, then $\Omega$ and $P$ must be close to a 
disc and to  a normalized reproducing kernel, respectively. 

As we can see from the form of the reproducing kernel $k_N(z,\zeta)$, the space of rescaled polynomials 
$\mathcal P_N$ resembles as $N\to\infty$ the Fock space $\mathcal F^2$ of 
entire functions such that $\int_{\C} |f(z)|^2 e^{-\pi |z|^2}dz <+\infty$. The 
reproducing kernel for such space is $k(z,\zeta) = e^{z\bar\zeta}$. 

The analog to \eqref{eq:qualitative} in the Fock space is proved in \cite{NT22}, while its stability is well studied  in \cite{GGRT}. These results may  be seen in terms of the energy concentration for the short-time Fourier transform (STFT) with the Gaussian window. They can also be interpreted as a quantitative Faber-Krahn inequality for the localization operator defined in terms of the STFT. 

Our stability estimates are modeled after the ones in \cite{GGRT} for the Fock space, but there are some points where 
necessarily our results are technically more delicate. 
Formally, the results in 
the Fock space can be obtained from the results in the space of polynomials. 
This is carried out in detail in Section~\ref{sec:Fock}.

In order to state our results, we define the distance of  any $P\in\mathcal P_N$ with 
$\|P\|_{N}=1$ to the normalized reproducing 
kernels in $\mathcal P_N$ as 
\begin{align}\label{eq:defDN}
    D_N(P)=\min\left\{\left\| P-e^{i\theta}\kappa_{N,a}\right\|_{N}: a\in\C, \theta\in [0,2\pi] \right\}.
\end{align}
Notice that $D_1(P)=0$ for all $P\in\mathcal P_1$.  The following statements also hold for $N=1$ but the proofs are immediate, so from now on we will assume $N\geq 2$.

Our first result is the stability of \eqref{eq:qualitative}, which can be also read as how close a polynomial of unit norm is to the normalized reproducing kernels if its concentration is close to the maximal one:
\begin{thm}\label{thm:concentration}
    There exists a constant $C>0$ (independent of $N$) such that for any measurable set $\Omega\subset\C$ with positive measure and any $P\in\mathcal{P}_N$ with $\|P\|_{N}=1$, there holds
    \begin{align*}
        C_{N,\Omega}(P)\leq \left(1-C\left(1-m(\Omega)\right)^{N+1}D_N(P)^2\right) C_{N,\Omega^*}(1),
    \end{align*}
    where $\Omega^*$ is the disc centered at $z=0$ with $m(\Omega^*)=m(\Omega)$.
    Equivalently, 
    \begin{align*}
        D_N(P)
        \leq \Big(C^{-1} \left(1-m(\Omega)\right)^{-(N+1)}\delta_N(P, \Omega)\Big)^{1/2},
    \end{align*}
    where
    \begin{align}\label{eq:defdeltaN}
    \begin{split}
        \delta_N(P,\Omega)
        &=
        1-\frac{C_{N,\Omega}(P)}{C_{N,\Omega^*}(1)}=1-\frac{N+1}{1-\left(1-m(\Omega)\right)^{N+1}}\int_\Omega \frac{| P(z)|^2}{(1+|z|^2)^N}\, dm(z).
        \end{split}
    \end{align}
\end{thm}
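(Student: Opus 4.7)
My plan combines three ingredients: Hardy--Littlewood rearrangement of the Husimi function $u(z) := |P(z)|^2/(1+|z|^2)^N$, the $SU(2)$ Möbius invariance of the setup, and an explicit computation in the orthonormal basis $\{e_n\}$ that reduces the problem to one-variable estimates on incomplete Beta integrals. The essential new content is an analysis of the defect in an integral inequality for the one-dimensional decreasing rearrangement $u^\ast$.

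\textbf{The centered case $\Omega=\Omega^*$.} Writing $P=\sum_{n=0}^N d_n e_n$ with $\sum|d_n|^2=1$, integration in polar coordinates on $\Omega^*=D(0,R)$ kills all cross-terms by angular orthogonality and, after the substitution $t=(1+r^2)^{-1}$, yields the closed form
\[
C_{N,\Omega^*}(P)=\sum_{n=0}^{N}|d_n|^2 J_\tau(n),\qquad J_\tau(n):=(N+1)\binom{N}{n}\int_\tau^1(1-t)^n t^{N-n}\,dt,
\]
with $\tau:=1-m(\Omega^*)$. The numbers $J_\tau(n)$ are the tail probabilities of $\mathrm{Beta}(N-n+1,n+1)$, a family that is stochastically decreasing in $n$, so $J_\tau(n)\le J_\tau(1)$ for every $n\ge1$, and an elementary computation gives the identity $J_\tau(0)-J_\tau(1)=(N+1)\tau^N(1-\tau)$. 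Combined with the reproducing-kernel estimate $|d_0|=|\langle P,\kappa_{N,0}\rangle|\le M:=\sup_a|P(a)|/(1+|a|^2)^{N/2}$ and the identity $D_N(P)^2=2(1-M)$, these facts yield
\[
J_\tau(0)-C_{N,\Omega^*}(P)\;\ge\;(N+1)\tau^N(1-\tau)\bigl(1-|d_0|^2\bigr)\;\ge\;\tfrac12(N+1)\tau^N(1-\tau)D_N(P)^2.
\]
Dividing by $J_\tau(0)=1-\tau^{N+1}\le(N+1)(1-\tau)$ gives $\delta_N(P,\Omega^*)\ge \tfrac12\tau^N D_N(P)^2$, which is stronger than the conclusion of the theorem in this special case.

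\textbf{General $\Omega$ and the main obstacle.} For arbitrary $\Omega$, Hardy--Littlewood only provides $C_{N,\Omega}(P)\le(N+1)\int_0^\ell u^\ast(s)\,ds$, where $u^\ast$ is the decreasing rearrangement of $u$ with respect to $m$-mass and $\ell=m(\Omega)$. This bound is in general strictly larger than $C_{N,\Omega^*}(P)$ and no longer decomposes along $\{e_n\}$. To control it I would first use $SU(2)$ Möbius invariance of $\delta_N$, $D_N$ and $m$ to move the maximizer of $u$ to the origin, so that $u^\ast(0)=M^2$, and then exploit the three structural constraints now available on $u^\ast$: the mass constraint $\int_0^1 u^\ast(s)\,ds=1/(N+1)$, the pointwise bound $u^\ast\le M^2$, and the Hardy--Littlewood--Pólya majorization $\int_0^{s'}u^\ast(s)\,ds\le (1-(1-s')^{N+1})/(N+1)$ for every $s'\in[0,1]$, obtained by applying the qualitative sharp inequality~\eqref{eq:qualitative} to the centered disc of $m$-mass $s'$. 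The stability then reduces to a one-dimensional extremal problem: maximize $(N+1)\int_0^\ell v(s)\,ds$ over decreasing $v$ satisfying these three constraints and quantify the deficit below $J_\tau(0)$. I expect the main technical difficulty to lie precisely in this one-dimensional problem, whose sharp resolution must deliver exactly the factor $(1-m(\Omega))^{N+1}$ required by the theorem; this is presumably where, as the introduction warns, the polynomial case is technically more delicate than the Fock-space model of~\cite{GGRT}.
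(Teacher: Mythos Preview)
Your argument for the centred disc $\Omega=\Omega^*$ is correct and is in fact a genuinely different route from the paper's: the orthogonal decomposition $P=\sum d_n e_n$, the monotonicity of the Beta tails $J_\tau(n)$, and the identity $J_\tau(0)-J_\tau(1)=(N+1)\tau^N(1-\tau)$ give a clean proof in that special case, with the slightly sharper factor $\tau^N$ in place of $\tau^{N+1}$. The paper never isolates this case; its method treats all $\Omega$ at once via the distribution function $\mu(t)=m(\{u>t\})$.

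The genuine gap is in the general case. The one-dimensional extremal problem you pose---maximise $(N+1)\int_0^\ell v$ over decreasing $v$ satisfying the mass constraint, the pointwise bound $v\le M^2$, and the Hardy--Littlewood--P\'olya majorisation $\int_0^{s'}v\le G(s')$---does \emph{not} deliver a deficit of order $(1-M^2)$. Consider $v(s)=\min\bigl(M^2,\,k(1-s)^N\bigr)$ with $k>1$ chosen so that $\int_0^1 v=1/(N+1)$; one finds the breakpoint $\alpha=(1-M^2)/(NM^2)$ and $k-1\sim\frac{N+1}{2N}(1-M^2)^2$ as $M^2\to 1$. This $v$ satisfies all three of your constraints (one checks $G(s')-\int_0^{s'}v=(k-1)(1-s')^{N+1}/(N+1)>0$ on $[\alpha,1]$ and $G(\alpha)/\alpha>M^2$ on $[0,\alpha]$), and even the stronger pointwise constraint that $v(s)/(1-s)^N$ be non-decreasing. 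Yet for any fixed $\ell>\alpha$,
\[
G(\ell)-\int_0^\ell v=\frac{(k-1)\tau^{N+1}}{N+1}\asymp\frac{(1-M^2)^2\,\tau^{N+1}}{N},
\]
which is quadratic rather than linear in $1-M^2$. Hence your constraints are too weak: they encode only the qualitative inequality \eqref{eq:qualitative} applied to all discs, not the finer analytic structure of $u$.

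What the paper uses instead is precisely this finer structure. After centring the maximum of $u$ at the origin, it writes $P=\sqrt T+\varepsilon Q$ with $\varepsilon^2=1-T$, $Q(0)=Q'(0)=0$, $\|Q\|_N=1$, and derives from pointwise estimates on $Q$, $Q'$, $Q''$ a sharp super-level bound (Lemma~\ref{lemma:super-level}):
\[
\mu(t)\le\bigl(1+C_0(1-T)\bigr)\Bigl(1-(t/T)^{1/N}\Bigr)\qquad\text{for }t\in[t_0,T].
\]
This, combined with a differential inequality for $\mu$ obtained from subharmonicity via \cite{KNOCT} (Lemma~\ref{lem:on_mu}), yields the crucial lower bound $\int_{t^*}^1(\mu_0-\mu)\,dt\ge\frac{C}{N}(1-T)$ of Lemma~\ref{lem:lower}. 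It is exactly this linear-in-$(1-T)$ estimate that your rearrangement constraints cannot reproduce; the competitor $v$ above violates it. The proof of the theorem is then the concatenation of Lemmas~\ref{lem:upper}, \ref{lem:lower} and~\ref{lem:valuemin} together with the bathtub inequality $\int_{\{u>t_0\}}u\,dm\ge\int_\Omega u\,dm$.
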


Observe that $\delta_N(P,\Omega)$ is the \emph{combined deficit} in the parlance of \cite{GGRT}, which measures how close $P$ is to be an optimal polynomial for the concentration in the domain $\Omega$. 

We now turn our attention to quantify the closeness of $\Omega$ to the extremal sets of the concentration.
In order to do that, we introduce a measure of such closeness:
Let $\Omega_1, \Omega_2\subset \C$ be two measurable sets such that $m(\Omega_1)=m(\Omega_2)$. We  define the following distance between them:
\begin{align*}
    \mathcal A_m(\Omega_1,\Omega_2)  =\frac{m(\Omega_1\backslash\Omega_2)+m(\Omega_2\backslash\Omega_1)}{m(\Omega_1)}.
\end{align*}
The Fraenkel asymmetry of a set $\Omega\subset\C$ measures its $\mathcal A_m$-distance to the closest disc of the same measure, i.e., 
\begin{align*}
    \mathcal A_m(\Omega)=\inf\big\{\mathcal A_m\big(\Omega,\mathcal D_r(z)\big): \ m\big(\mathcal D_r(z)\big)=m(\Omega), z\in\C\big\},
\end{align*}
where 
\begin{align*}
    \mathcal D_r(z)=\left\{w\in\C: d(z,w)=\frac{|z-w|}{\sqrt \pi\sqrt{(1+|z|^2)(1+|w|^2)}}\leq r\right\}.
\end{align*}
Here $d(\cdot,\cdot)$ is the chordal distance. Notice that $\mathcal D_r(z)=\mathbb D_ {\rho}(\zeta)$ for some $\rho$ and $\zeta$, where   $\mathbb D_\rho(\zeta)$ denotes the usual disc in the Euclidean metric centered at $\zeta$. If $z=0$, then $\zeta=0$ and $\rho=\frac{\sqrt{\pi}r}{\pi\sqrt{1-\pi r^2}}$. 

\begin{prop}\label{prop:domains}
Under the same assumptions of Theorem~\ref{thm:concentration}, for all $N\in\mathbb N$ we have
    \begin{align*}
        \mathcal A_m(\Omega)\leq  C\frac{\big(1-m(\Omega)\big)^{-3 (N+1)/2}}{m(\Omega)}\delta_N(P,\Omega)^{1/2}.
    \end{align*}
\end{prop}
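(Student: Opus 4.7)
The plan is to combine Theorem~\ref{thm:concentration} with a quantitative bathtub principle on the centered disc, using the Möbius (sphere rotation) symmetry to reduce to the case $a=0$. Theorem~\ref{thm:concentration} provides a center $a\in\C$ and phase $\theta\in[0,2\pi]$ with
\[
    \|P-e^{i\theta}\kappa_{N,a}\|_N=D_N(P)\leq C^{-1/2}(1-m(\Omega))^{-(N+1)/2}\delta_N(P,\Omega)^{1/2}.
\]
The sphere rotations act on $\C$ by Möbius transformations preserving the chordal distance (hence $m$), and induce a unitary action on $\mathcal P_N$ sending $\kappa_{N,a}$ to $\kappa_{N,0}=1$ up to a phase. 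Applying the rotation that sends $a$ to $0$ and absorbing the phase, I may assume henceforth $a=0$, $\|P-1\|_N=D_N(P)$, and (by optimality of $\theta$) that $P(0)=\langle P,1\rangle_N$ is real positive, so $P(0)=1-D_N(P)^2/2$.

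Since $F_1(z)=(1+|z|^2)^{-N}$ is radial and strictly decreasing in $|z|$, the centered disc $\Omega^*=\{|z|\leq\rho\}$ with $(1+\rho^2)^{-1}=t_0:=1-m(\Omega)$ is its $m$-superlevel set of the prescribed measure. Writing $s=m(\Omega\setminus\Omega^*)=m(\Omega^*\setminus\Omega)$ and placing both symmetric-difference pieces in annuli adjacent to $\partial\Omega^*$ yields the sharp bathtub lower bound
\[
    I_1(\Omega^*)-I_1(\Omega)\geq (t_0+s)^{N+1}+(t_0-s)^{N+1}-2t_0^{N+1}\geq N(N+1)\,t_0^{N-1}s^2,
\]
where $I_1(\Omega):=(N+1)\int_{\Omega}(1+|z|^2)^{-N}\,dm$. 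The remaining task is to bound $I_1(\Omega^*)-I_1(\Omega)$ from above by a quantity of order $\delta_N$, not merely $\delta_N^{1/2}$, so that $s^2$ can be controlled by $\delta_N$.

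Writing $I_1(\Omega^*)-I_1(\Omega)=\delta_N I_1(\Omega^*)+(I_P(\Omega)-I_1(\Omega))$ and expanding $|P|^2-1=|P-1|^2+2\Re(P-1)$, the quadratic piece integrates to at most $\|P-1\|_N^2=D_N(P)^2$. For the linear piece I split $\int_{\Omega}=\int_{\Omega^*}+\int_{\Omega\setminus\Omega^*}-\int_{\Omega^*\setminus\Omega}$. The crucial observation is that over the centered disc $\Omega^*$, the mean value property for the holomorphic function $P-1$ yields the exact identity
\[
    (N+1)\int_{\Omega^*}\frac{P(z)-1}{(1+|z|^2)^N}\,dm(z)=(P(0)-1)\,I_1(\Omega^*),
\]
whose real part equals $-\tfrac{1}{2}D_N(P)^2\,I_1(\Omega^*)$ by the phase normalization, hence is of order $\delta_N$. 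Over $\Omega\triangle\Omega^*$, the reproducing-kernel pointwise estimate $|P(z)-1|\leq D_N(P)(1+|z|^2)^{N/2}$, combined with $(1+|z|^2)^{-N/2}\leq 1$ inside $\Omega^*$ and $(1+|z|^2)^{-N/2}\leq t_0^{N/2}$ outside, bounds the contribution of those two pieces by $O((N+1)D_N(P)\,s)$.

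These estimates together produce a quadratic inequality
\[
N(N+1)\,t_0^{N-1}\,s^2\leq C\bigl[\delta_N+D_N(P)^2+(N+1)\,D_N(P)\,s\bigr].
\]
Substituting $D_N(P)^2\leq C^{-1}t_0^{-(N+1)}\delta_N$ and solving the quadratic in $s$ yields $s\leq C\,t_0^{-(3N-1)/2}\delta_N^{1/2}$; then $\mathcal A_m(\Omega)\leq 2s/m(\Omega)$ satisfies the statement (since $t_0^{-(3N-1)/2}\leq t_0^{-3(N+1)/2}$ for $t_0\leq 1$). The main obstacle, and what makes the Möbius reduction essential, is precisely the mean-value cancellation on the centered disc: without it, a naive Cauchy--Schwarz bound on the linear term would give only $O(D_N(P))=O(\delta_N^{1/2})$, yielding the weaker $\mathcal A_m(\Omega)=O(\delta_N^{1/4})$.
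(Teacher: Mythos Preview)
Your argument is correct and genuinely different from the paper's. The paper compares $\Omega$ first to the superlevel set $A_\Omega=\{u>t_\Omega\}$ of $u(z)=|P(z)|^2/(1+|z|^2)^N$ via a transport map $\mathcal T:A_\Omega\setminus\Omega\to\Omega\setminus A_\Omega$, using the pointwise bound $|Tu_0-u|\le 2\varepsilon$ (with $\varepsilon^2=1-T$) to sandwich $A_\Omega$ between two nearby discs, and then compares $A_\Omega$ to $\Omega^*$. Your route bypasses the superlevel sets of $u$ entirely: after the M\"obius reduction you compare $\Omega$ directly to the centered disc via a quantitative bathtub inequality for the \emph{extremizer} $(1+|z|^2)^{-N}$, and the mean-value identity on $\Omega^*$ is exactly what upgrades the linear term from $O(D_N)$ to $O(D_N^2)$. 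This is more elementary (no transport map, and it uses Section~2 only through Theorem~\ref{thm:concentration}), and in fact yields the slightly sharper exponent $t_0^{-(3N-1)/2}$. The paper's approach, on the other hand, adapts more directly to the operator setting of Section~\ref{sec:generalop}: there one has $u(z)=\sum_j w_j|P_j(z)|^2/(1+|z|^2)^N$ with no single polynomial available, so the decomposition $|P|^2-1=|P-1|^2+2\Re(P-1)$ and the mean-value cancellation have no obvious analogue, whereas the pointwise estimate $|Tu_0-u|\le 2\varepsilon$ and the transport argument go through with only minor changes.
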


We finally study similar stability results for the measure of the concentration of the reproducing kernels in terms of the  Wehrl entropy.  
Namely, for any $P\in \mathcal P_N$, its  Wehrl entropy is defined as
\begin{align*}
    S_N(P)=-(N+1)\int_{\C} \frac{|\hat P(z)|^2}{(1+|z|^2)^N}\log\left(\frac{|\hat P(z)|^2}{(1+|z|^2)^N}\right)dm(z),
\end{align*}
where $\hat P(z) = P(z)/\|P\|_{N}$.
In addition, given  a convex, non-linear, continuous function $\Phi:[0,1]\to\mathbb{R}$ with $\Phi(0)=0$,  we can define a generalized Wehrl entropy as follows: 
\begin{align*}
    S_{N,\Phi}(P)=-(N+1)\int_{\C} \Phi\left(\frac{|\hat P(z)|^2}{(1+|z|^2)^N}\right)dm(z).
\end{align*}

In \cite{Lieb78} it was conjectured that $S_N(P)$  is minimized when $P$ is a reproducing kernel. 
This was proved in \cite{LiebSo14} (see also \cite{LiebSo16}), after some partial results in \cites{Sch99, Bod05}. Furthermore,  the reproducing kernels are the unique minimizers. that was  shown independently in \cite{KNOCT} and \cite{Frank23}.

The next result quantifies the distance of $P$ to the reproducing kernels in terms of the difference of its generalized Wehrl entropy to its minimum value.

\begin{thm}\label{thm:Wehrl}
    Let $\Phi:[0,1]\to\mathbb{R}$ be a non-linear, convex, continuous function with $\Phi(0)=0$. Then there exists a constant $C>0$ (depending only on $\Phi$ and not on $N$) such that for any $P\in\mathcal P_N$ with $\|P\|_{N}=1$, it holds
    \begin{align*}
        D_N(P)^2 \le C\left(S_{N,\Phi}(P)-S_{N,\Phi}(1)\right).
    \end{align*}
\end{thm}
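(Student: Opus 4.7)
The strategy is to upgrade Theorem~\ref{thm:concentration} from a single estimate to one holding at every level of $u_P(z) := |\hat P(z)|^2/(1+|z|^2)^N$, and then to recognize the integrated deficit as the Wehrl deficit via a layer-cake identity. Since $0 \le u_P \le 1$ (by the reproducing-kernel bound) and $(N+1)\int u_P\, dm = 1$, and the same holds for $u_1 := (1+|z|^2)^{-N}$ with distribution function $\mu_1(s) = 1 - s^{1/N}$, Fubini applied to $\Phi(u) = \int_0^u d\Phi'(r)$ gives $(N+1)\int_\C \Phi(u)\, dm = (N+1)\int_0^1 \Phi'(s)\,\mu_u(s)\, ds$ for $u \in \{u_P,u_1\}$. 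Since $\int_0^1 [\mu_1 - \mu_P]\, ds = 0$, integration by parts (Stieltjes) yields
\[
S_{N,\Phi}(P) - S_{N,\Phi}(1) = (N+1)\int_0^1 M(s)\, d\Phi'(s), \qquad M(s) := \int_\C (u_1-s)^+\, dm - \int_\C (u_P-s)^+\, dm.
\]

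To lower bound $M$, fix $s\in(0,1)$ and apply Theorem~\ref{thm:concentration} to $\Omega_s := \{u_P > s\}$, of measure $\mu_P(s)$; the layer-cake identity $(N+1)\int_{\Omega_s} u_P\, dm = (N+1)s\mu_P(s) + (N+1)\int(u_P-s)^+\, dm$ converts the concentration stability into
\[
(N+1)\int(u_P-s)^+\, dm \le h_s(\mu_P(s)) - C(1-\mu_P(s))^{N+1}\bigl[1-(1-\mu_P(s))^{N+1}\bigr] D_N(P)^2,
\]
where $h_s(m):=1-(1-m)^{N+1}-(N+1)sm$. The key observation is that $h_s$ is strictly concave in $m$ with maximum at $m=\mu_1(s)=1-s^{1/N}$; consequently $h_s(\mu_P(s)) \le h_s(\mu_1(s)) = (N+1)\int(u_1-s)^+\, dm$. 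Combining the two inequalities gives the pointwise stability
\[
(N+1)M(s) \ge C\, g_P(s)\, D_N(P)^2, \qquad g_P(s) := (1-\mu_P(s))^{N+1}\bigl[1-(1-\mu_P(s))^{N+1}\bigr].
\]

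Inserting this into the layer-cake identity yields
\[
S_{N,\Phi}(P) - S_{N,\Phi}(1) \ge C D_N(P)^2 \int_0^1 g_P(s)\, d\Phi'(s),
\]
so everything reduces to the \emph{uniform} lower bound $\int_0^1 g_P(s)\, d\Phi'(s) \ge c(\Phi) > 0$, independent of $N\ge 2$ and of $P$ with $\|P\|_N=1$. This is where I expect the main difficulty. The function $\psi(s):=(1-\mu_P(s))^{N+1}$ is nondecreasing, runs from $\psi(0^+)=0$ to $\psi(1^-)=1$, and satisfies the constraint $\int_0^1 \psi(s)^{1/(N+1)}\, ds = N/(N+1)$ coming from $(N+1)\int u_P\, dm = 1$. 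Since $\psi$ traverses the whole interval $[0,1]$, it spends a subinterval in $[1/4,3/4]$ on which $g_P=\psi(1-\psi)\ge 3/16$, and the integral constraint quantifies the length and location of that subinterval uniformly in $N$; the non-linearity of $\Phi$ is then used to ensure that $d\Phi'$ charges this subinterval with mass bounded below by some $c(\Phi)>0$. The subtlety is to make the matching between the ``transition region'' of $\psi$ and the support of $d\Phi'$ independent of $N$, which in the troublesome extremal regime where $\|u_P\|_\infty$ is very close to $1$ is handled separately by a perturbative argument around the reproducing kernels, since in that regime $D_N(P)$ is already small and only a crude bound on the entropy deficit is needed.
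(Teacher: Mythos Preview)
Your idea of applying Theorem~\ref{thm:concentration} levelwise and integrating against $d\Phi'$ is natural, and the derivation up to $(N+1)M(s)\ge \bigl[h_s(\mu_1(s))-h_s(\mu_P(s))\bigr]+C\,g_P(s)\,D_N(P)^2$ is correct. The gap is in the final step: the uniform lower bound $\int_0^1 g_P(s)\,d\Phi'(s)\ge c(\Phi)>0$ is \emph{false}. Take $\Phi$ piecewise linear with a single kink at $s=1/2$, so that $d\Phi'$ is a point mass there; you then need $g_P(1/2)=\psi(1/2)\bigl(1-\psi(1/2)\bigr)\ge c>0$ uniformly. But for $P=e_{\lfloor N/2\rfloor}$ one computes $T=\sup u_P=\binom{N}{\lfloor N/2\rfloor}2^{-N}\sim\sqrt{2/(\pi N)}\to 0$, hence $\mu_P(1/2)=0$, $\psi(1/2)=1$, and $g_P(1/2)=0$. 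More generally the ``transition interval'' where $\psi\in[1/4,3/4]$ lies inside $(0,T)$ and can sit arbitrarily close to $0$; the constraint $\int_0^1\psi^{1/(N+1)}\,ds=N/(N+1)$ does not pin it down. You have also misidentified the hard regime: when $T$ is close to $1$, $D_N(P)^2=2(1-\sqrt{T})$ is small and crude bounds suffice; the trouble is $T$ small, where $D_N(P)^2\approx 2$ and the entropy deficit must be bounded below by a \emph{constant}.

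The loss occurs precisely where you discard the term $h_s(\mu_1(s))-h_s(\mu_P(s))\ge 0$ and keep only $C g_P(s)D_N(P)^2$. When $\mu_P(s)=0$ (i.e.\ $s\ge T$) the stability term vanishes, yet $(N+1)M(s)=h_s(\mu_1(s))=(N+1)\int(u_1-s)^+\,dm\ge (1-s)^2/2$, which together with $D_N(P)^2\le 2$ is exactly what is needed there. The paper does not pass through Theorem~\ref{thm:concentration} at all; it works directly with $\int_0^1\Phi'(t)\bigl(\mu_0(t)-\mu(t)\bigr)\,dt$, exploits the single-crossing structure $\mu\gtrless\mu_0$ at $t^*$ (Lemma~\ref{lem:unique_t*}) together with the monotonicity of $\Phi'$, picks fixed $t_1<t_2$ with $\Phi'(t_1)<\Phi'(t_2)$, and then bounds either $\int_{t_2}^1(\mu_0-\mu)$ or $\int_0^{t_1}(\mu-\mu_0)$ below by $c(1-T)/N$ via Lemmas~\ref{lem:on_mu} and~\ref{lem:lower}, treating the regime $T\le t_0$ by the direct computation $\int_{t_0}^1(1-t^{1/N})\,dt\ge (1-t_0)^2/(2N)$.
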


The analog result in the Fock space is studied in \cite{FNT23}, which quantifies the inequality in \cites{Lieb78, Car91, LiebSo14} (see also \cite{Luo00}, and \cites{KNOCT, Frank23} for the uniqueness of the minimimizers), which in turn answered positively to the Wehrl's conjecture in \cite{Wehrl}. 

As in the case of the previous estimates,
these results can be formally inferred from ours in the space of polynomials (see Section~\ref{sec:Fock}).
Theorem~\ref{thm:Wehrl} follows the scheme in \cite{FNT23}, but again, our proof faces  some additional difficulties. 

We notice that all the previous results are  optimal regarding the power of the distance to the reproducing kernels $D_N(P)$  or of the deficit $\delta(P,\Omega)$. 
See Section~\ref{sec:optimality} for more details. 

We finally remark that the preceding statements are also valid for operators $\mathcal P_N\to\mathcal P_N$ that are positive-semidefinite and have unit trace. In the same spirit the polynomials  define  pure states (and in particular, coherent states in the case of normalizing reproducing kernels), these operators turn out to express mixed states. For more precise details, see Section~\ref{sec:generalop}.

The rest of the article is organized as follows:
In Section~\ref{sec:superlevelsets} we state and prove some technical lemmas in which we estimate the measure of the superlevel sets of the weighted polynomials.
Section~\ref{sec:proofs} is devoted to the proofs of the main results. 
In Section~\ref{sec:Fock} we recover the results for the Fock space by taking the limit as $N\to \infty$ of our main results. 
Section~\ref{sec:optimality} focuses on the sharpness of the stability estimates, and in Section~\ref{sec:generalop} we collect the results and the proofs for general operators. 
In Section~\ref{sec:Schatten} we conclude with some remarks on the concentration operator in Schatten $p$-spaces.

\section{On the measure of super-level sets}
\label{sec:superlevelsets}

Given $P\in \mathcal{P}_N$, we introduce the following notation:
\begin{align*}
    u(z)& = \frac{|P(z)|^2}{(1+|z|^2)^N},
    \\
    T&=\sup_{z\in\mathbb C} u(z),\\
    \mu(t)& = m(\{u(z) > t\})
\end{align*}
We notice that if $P=1$, then $T=1$ and
\begin{align}\label{eq:mu0}
    \mu(t)=(1-t^{1/N})=:\mu_0(t).
\end{align}
Moreover 
\[|P(z)|^2 = |\langle P, k_N(\cdot, z)\rangle_{N}|^2\le \|P\|_N^2 (1+|z|^2)^N.\]
Therefore, if $\|P\|_N = 1$, then $T \le 1$ and the equality is attained only when $P$ is a unimodular constant times a normalized reproducing kernel.

This section is devoted to the study of the functions $\mu(t)$ and its relationship with $\mu_0(T)$. The results and their proofs mimic those in Section~2 of \cite{GGRT} in the Fock space, which can be recovered from ours if $N\to\infty$.
Nevertheless, our proofs are more intricate, since we do not only need to carry the dependence on $N$ of all the estimates, but to work with the  measure $dm(z)$.
Moreover, while in \cite{GGRT} they can inherit some estimates on $\mu(t)$ from \cite{NicRic}, we need to introduce here Lemma~\ref{lem:on_mu}, where related estimates are provided.

\begin{lemma}\label{lemma:super-level} 
For every $t_0\in (0,1)$, there exists a threshold $T_0\in (t_0,1)$ and a 
constant
$C_0=C_0(t_0)>0$ with the following property:
If $P\in \mathcal{P}_N$ is such that $\|P\|_{N}=1$ with
$T\geq T_0$,
then
\begin{equation}
    \label{newestmu}
     \mu(t)\leq  \left( 1+{C_0
(1-T)}\right)\left (1-\left(\frac{t}T\right)^{1/N}\right)\quad\forall t\in
[t_0,T].
\end{equation}
\end{lemma}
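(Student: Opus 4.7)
The plan is to combine the symmetry of the spherical setting with a reproducing-kernel bound on the orthogonal complement of the constants, and then use the preliminary Lemma~\ref{lem:on_mu} to upgrade a crude $\sqrt{1-T}$ estimate to the required $(1-T)$ one.

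First, using the invariance of $\|\cdot\|_{N}$ and $dm$ under the Möbius transformations of $\C$ corresponding to rotations of the Riemann sphere, I may assume that the supremum is attained at the origin, so $T=u(0)$ and $|P(0)|^2=T$. Writing $P=c_0+R$ with $c_0=P(0)=\langle P,1\rangle_{N}$ gives $R\perp 1$ and $\|R\|_{N}^2=1-T$. Applying Cauchy--Schwarz to $R(z)=\langle R,k_N(\cdot,z)-1\rangle_{N}$, together with $\|k_N(\cdot,z)-1\|_{N}^2=(1+|z|^2)^N-1$, produces the pointwise bound
\begin{equation*}
|R(z)|^2 \le (1-T)\bigl((1+|z|^2)^N-1\bigr).
\end{equation*}
Combined with $|P(z)|\le |c_0|+|R(z)|$, a short algebraic manipulation shows that $u(z)>t$ forces $(1+|z|^2)^N\le \bigl(\sqrt{tT}-\sqrt{(1-t)(1-T)}\bigr)^{-2}$, which translates into $\mu(t)\le 1-\bigl(\sqrt{tT}-\sqrt{(1-t)(1-T)}\bigr)^{2/N}$.

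Taylor-expanding this last expression in $1-T$ only yields a correction to the extremal profile $\mu_0(t/T)=1-(t/T)^{1/N}$ of order $\sqrt{1-T}$, rather than the $1-T$ demanded by \eqref{newestmu}. The deficit comes from the Cauchy--Schwarz and triangle inequalities being tight only at isolated points rather than globally. To recover the sharp $1-T$ correction I would invoke Lemma~\ref{lem:on_mu}, which quantifies the deviation of $\mu$ from the extremal profile in an averaged sense, and split $\{u>t\}$ into a \emph{typical} part where the pointwise inequalities above are far from saturated, and a \emph{bad} part of small measure that can be absorbed into the $C_0(1-T)$ factor. The threshold $T_0$ is chosen so that, for $T\ge T_0$, the bad set has measure comparable to $(1-T)\,\mu_0(t/T)$.

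The main obstacle is exactly this last bootstrap. Transferring the Fock-space averaging argument of \cite{GGRT} to the spherical measure $dm$ requires quantitative control on the failure of saturation in the pointwise bound for $|R(z)|^2$, and all constants must be tracked uniformly in $N$: a constant $C_0$ that blew up as $N\to\infty$ would be inconsistent both with Theorem~\ref{thm:concentration} and with the Fock limit carried out in Section~\ref{sec:Fock}. This is precisely the role of Lemma~\ref{lem:on_mu}, which plays here the part that the results of \cite{NicRic} play in the Fock setting of \cite{GGRT}.
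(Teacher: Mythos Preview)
Your proposal correctly identifies the starting normalization and the crude pointwise bound, and you are right that it only yields a correction of order $\sqrt{1-T}$ rather than $1-T$. But the last paragraph is where the proof actually lives, and it is not carried out: you only sketch a ``split into typical and bad parts'' and appeal to Lemma~\ref{lem:on_mu}. There is no indication of how the differential inequality $\mu'(t)\le -\tfrac{1}{Nt}(1-\mu(t))$ or the monotonicity of $(\mu-\mu_0)/t^{1/N}$ would convert an $\varepsilon=\sqrt{1-T}$ error into an $\varepsilon^2$ one. Those statements compare $\mu$ to $\mu_0(t)=1-t^{1/N}$, whereas \eqref{newestmu} is a comparison with $\mu_0(t/T)$; the two comparisons differ by exactly the order you are trying to recover, so the proposed bootstrap is circular at the level of orders. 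Moreover, in the paper Lemma~\ref{lem:on_mu} is used downstream (in Lemmas~\ref{lem:unique_t*}--\ref{lem:lower}), not to prove Lemma~\ref{lemma:super-level}.

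The paper's argument gets the $1-T$ factor by a genuinely different mechanism that your approach discards. Since the maximum is at $0$, one has not only $P(0)=\sqrt{T}$ but also $P'(0)=0$, so $P=\sqrt{T}+\varepsilon Q$ with $Q\perp\{e_0,e_1\}$; this extra orthogonality is never used in your decomposition $P=c_0+R$. The paper then keeps the cross term $2\sqrt{T}\varepsilon\,\Re Q$ rather than absorbing it via the triangle inequality, introduces a one-parameter family of star-shaped sets $E_s$ interpolating between the radial superlevel set ($s=0$) and the true one ($s=1$), and computes $M(s)=m(E_s)$. The key point is that $M'(0)=0$: it reduces to $\int_0^{2\pi}\Re Q(r_0 e^{i\theta})\,d\theta$, which vanishes by the mean value property since $\Re Q$ is harmonic with $Q(0)=0$. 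Taylor's theorem then gives $M(1)\le (1+C\varepsilon^2)M(0)$ directly. Your Cauchy--Schwarz/triangle-inequality step destroys precisely this first-order cancellation, and no averaging lemma downstream can resurrect it.
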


\begin{proof}
The proof is split into five steps.

\noindent\textit{Step 1: Decomposition of $P$.} 
We can assume without loss of generality  that $u(z)$ attains its supremum at $z=0$, and that, in particular,  $P(0)=\sqrt{T}$. This in addition implies that $P'(0)=0$, i.e. $\langle P, e_1\rangle_N=0$.
We then write 
\begin{align*}
    P(z)=\sqrt{T}+\varepsilon Q(z), 
\end{align*}
where   
\begin{align}\label{eq:defQ}
    Q(z)=\sum_{n=2}^N q_n e_n(z), \quad \|Q\|_{N}^2=\sum_{n=2}^N|q_n|^2=1.
\end{align}
The assumption on the norm of $Q$ implies
\begin{align}\label{eq:epsilonT}
    \varepsilon^2
    &=\|P-\sqrt{T}\|_{N}
    =1+T-2\sqrt{T}\Re \langle P,1\rangle_{N}=1-T.
\end{align}

With the previous decomposition, we obtain
\begin{equation}
  \label{eq:estimateP^2}
  |P(z)|^2\leq T+ \varepsilon^2 |Q(z)|^2+ 2\sqrt{T}\varepsilon \Re Q(z)
\end{equation}
By \eqref{eq:defQ} and the Cauchy-Schwarz inequality, we estimate $Q(z)$ as  follows:
\begin{equation}
\label{eq:estimateQ^2}
|Q(z)|^2\leq \left(\sum_{n=2}^N |q_n|^2\right)
\left(\sum_{n=2}^N |e_n(z)|^2\right)
=\sum_{n=2}^N\binom{N}{n}|z|^{2n}
= (1+|z|^2)^N-1-N |z|^2.
\end{equation}
In particular, 
\begin{equation}
\label{eq:estimateQ^2_s}
|Q(z)|^2\leq (1+|z|^2)^N-1.
\end{equation}

\noindent\textit{Step 2:  Estimates for $\Re Q$.}
Throughout this step, we are going to use  the inequalities
\begin{align}\label{eq:ineqbinomial}
  \binom{N}n  \leq \frac{N^2}{n(n-1)}\binom{N}{n-2},  \quad {\binom{N}n  \leq \frac{N^2}{{n(n-1)}}\binom{N-2}{n-2},}\quad n\geq 2  
\end{align}
First of all, arguing as in \eqref{eq:estimateQ^2} and using the first inequality in \eqref{eq:ineqbinomial}
\begin{align*}
    |Q(z)|^2\leq 
    \sum_{n=2}^N\binom{N}{n}|z|^{2n}
    \leq \frac{N^2}{2} \sum_{n=2}^{N}\binom{N}{n-2}|z|^{2n}\leq \frac{N^2}{2} |z|^4(1+|z|^2)^N.
\end{align*}
Differentiating $Q$ and using similar arguments, we infer
\begin{align*}
    |Q'(z)|^2
    &\leq 
    \sum_{n=2}^N n^2 \binom{N}{n}|z|^{2(n-1)}
    \leq N^2 \sum_{n=2}^{N}\binom{N}{n-2}|z|^{2(n-1)}\leq N^2 |z|^2(1+|z|^2)^N.
\end{align*} 
{
We differentiate $Q$ again and we make use of the second inequality in \eqref{eq:ineqbinomial}  to obtain
\begin{align*}
    |Q''(z)|^2
    &\leq 
    \sum_{n=2}^N n^2(n-1)^2 \binom{N}{n}|z|^{2(n-2)}
    \leq N^2 \sum_{n=2}^N {n^2} \binom{N-2}{n-2}|z|^{2(n-2)}.
\end{align*} 

If $N\geq 4$ we can use again the first inequality  in \eqref{eq:ineqbinomial}  to bound the last term as follows:
\begin{align*}
    \sum_{n=2}^N n^2\binom{N-2}{n-2}|z|^{2(n-2)}
    &\leq  C (1+N|z|^2) + N^2 \sum_{n=4}^N \binom{N-2}{n-4}|z|^{2(n-2)}
    \\&\leq C (1+N^2 |z|^4) + N^2|z|^4(1+|z|^2)^{N-2}.
\end{align*}
Notice that if $N\leq 4$, the same estimate holds. 
Hence, 
\begin{align*}
    |Q''(z)|^2
    &\leq C N^2(1+N|z|^2)^2(1+|z|^2)^{N-2}.
\end{align*}
}

Let $h(z):=\Re Q(z)$. Since $|h(z)|\leq |Q(z)|$, we have
\begin{align}\label{eq:estimateh}
    |h(z)|\leq \frac{N}{\sqrt{2}} |z|^2(1+|z|^2)^{N/2}.
\end{align}
Furthermore, the Cauchy-Riemann equations imply that  $|\nabla h(z)|= |Q'(z)|$
and  
\[|D^2 h(z)|= \sqrt 2 |Q''(z)|.\]
Hence, one gets the following estimates  for
the first and second
radial derivatives of $h(r e^{i\theta})$, which are independent of the angular variable: 
\begin{align}\label{eq:estimatehr}
\left\vert  {\partial_r h(r e^{i\theta})} \right\vert\leq
 Nr(1+r^2)^{N/2},
\\
\label{eq:estimatehrr}
\left\vert  {\partial_{rr} h(r e^{i\theta})}\right\vert
\leq 
{C N (1+Nr^2) (1+r^2)^{N/2-1}}.
\end{align}

\noindent\textit{Step 3: Star-shaped domains in $\{u(z)>t\}$.} 
From \eqref{eq:estimateP^2} and \eqref{eq:estimateQ^2_s}, we have 
\begin{align*}
    u(re^{i\theta})=\frac{|P(re^{i\theta})|^2}{(1+|z|^2)^N}\leq \frac{T -\varepsilon^2+2\sqrt{T}\varepsilon h(re^{i\theta})}{(1+r^2)^N}+\varepsilon^2.
\end{align*}
Then
\begin{align*}
    \mu(t)=m\big(\{u(re^{i\theta})>t\}\big)\leq m\big(\{(t-\varepsilon^2)(1+r^2)^N-2\sqrt{T}\varepsilon h(re^{i\theta})< T-\varepsilon^2\}\big).
\end{align*}

Let $s\in[0,1]$ be an extra variable and define for any $\theta\in[0, 2\pi)$ the function
\begin{align*}
    g_\theta(r,s)=\frac{t-\varepsilon^2}{T-\varepsilon^2}(1+r^2)^N-\frac{2\sqrt{T}\varepsilon}{T-\varepsilon^2}sh(re^{i\theta}).
\end{align*}
We also introduce the sets
\begin{align*}
    E_s=\{re^{i\theta}\in\C: g_\theta(r,s)<1\}.
\end{align*}
Therefore, as far as $T-\varepsilon^2=2T-1>0$, i.e. $T>\frac 12$, we have
\begin{align}
    \mu(t)\leq m(E_1).
\end{align}
In order to estimate $m(E_1)$ 
we will see that the sets $E_s$ are star-shaped domains with  $E_s\subset E_{s'}$ for $s<s'$.
Finally, using that $h$ is a real-valued harmonic polynomial and the results in Step 2, we will be able to estimate $m(E_s)$ in terms of $m(E_0)$.

The fact that $E_s$ is star-shaped with respect to the origin follows from $g_\theta(0,s)<1$ and $\partial_r g_\theta(r,s)>0$ for any $s\in(0,1)$ and $\theta\in[0, 2\pi)$. 
Indeed, $g_\theta(0,s)=\frac{t-\varepsilon^2}{T-\varepsilon^2}< 1$.
Moreover, using \eqref{eq:estimatehr} and recalling that $N\geq 2$, 
\begin{align}
    \label{eq:gr}
    \partial_r g_\theta(r,s)
    &=2N\frac{t-\varepsilon^2}{T-\varepsilon^2}r(1+r^2)^{N-1}-\frac{2\sqrt{T}\varepsilon}{T-\varepsilon^2}s\partial_r h(re^{i\theta})\\\nonumber
    &\geq 2Nr(1+r^2)^{N-1}\left(\frac{t-\varepsilon^2}{T-\varepsilon^2}-\frac{\sqrt{T}\varepsilon}{T-\varepsilon^2}s(1+r^2)^{1-N/2}\right)\\\nonumber
    &\geq 2Nr(1+r^2)^{N-1}\frac{t-\varepsilon^2-\sqrt{T}\varepsilon}{T-\varepsilon^2}.
\end{align}
Given $t_0\in(0,1)$, let $T>\max\big\{t_0, \frac12\big\}$ such that 
\begin{align}\label{eq:T0}
    \varepsilon^2+\sqrt{T}\varepsilon=1-T+\sqrt{T(1-T)}\leq\frac{t_0}{2}.
\end{align}
Then, for any $t\in(t_0, T)$,
\begin{align}\label{eq:estimategr}
    \partial_r g_\theta(r,s)
    &\geq \frac{t_0}{T-\varepsilon^2}Nr(1+r^2)^{N-1}>0.
\end{align}

\noindent\textit{Step 4: Estimates for the radial distance of $E_s$.}
Given $s\in[0,1]$ and $\theta\in[0,2\pi)$, let $r_s(\theta)>0$ be the unique solution of $g_\theta(r,s)=1$.
Clearly, $r_0(\theta)=r_0$ given by $(1+r_0^2)^N=\frac{T-\varepsilon^2}{t-\varepsilon^2}$. In addition, using  \eqref{eq:estimategr} and since $T-\varepsilon^2<1$, one obtains
\begin{align*}
    g_\theta(r,s)
    &\geq g_\theta(0,s)+\int_0^rt_0N\rho(1+\rho^2)^{N-1}d\rho
    \geq \frac{t_0}{2}(1+r^2)^N, 
\end{align*}
so $(1+r_s(\theta)^2)^N\leq \frac{2}{t_0}$.

Applying the implicit function theorem, we have
\begin{align}
\label{eq:rs}
    \partial_s r_s=-\frac{\partial_s g_\theta(r_s,s)}{\partial_r g_\theta(r_s, s)}=\frac{2\sqrt{T}\varepsilon}{T-\varepsilon^2}\frac{h(r_se^{i\theta})}{\partial_r g_\theta(r_s, s)}.
\end{align}
Here and in the remaining of the proof,  the explicit dependence on $\theta$ of $r_s$ is omitted.
Differentiating again with respect to $s$ and taking into account that $g_\theta$ depends linearly on $s$, it follows that
\begin{align}
\label{eq:rss}
\begin{split}
    \partial_{ss} r_s
    &=-\frac{\partial_{rs} g_\theta(r_s,s)\partial_s r_s}{\partial_r g_\theta(r_s, s)}-\frac{\partial_{s} r_s}{\partial_r g_\theta(r_s, s)}\left(\partial_{rr}g_\theta(r_s,s) \partial_s r_s+\partial_{rs}g_\theta(r_s,s)\right)
   \\& =-\frac{2\partial_{rs} g_\theta(r_s,s)\partial_s r_s+\partial_{rr} g_\theta(r_s,s)(\partial_s r_s)^2}{\partial_r g_\theta(r_s, s)}.
   \end{split}
\end{align}

Using \eqref{eq:estimateh} and \eqref{eq:estimategr} in \eqref{eq:rs}, we finally obtain
\begin{align}
\label{eq:estimaters}
    |\partial_s r_s|\leq \frac{\sqrt{2T}\varepsilon}{t_0}r_s(1+r_s^2)^{1-N/2}.
\end{align}
Let $\zeta(s)=\frac{r_s^2}{1+r_s^2}$ and notice that
\begin{align*}
    \frac{|\partial_s\zeta(s)|}{\zeta(s)}
    &=\frac{2|\partial_s r_s|}{r_s(1+r_s^2)}\leq 2\frac{|\partial_sr_s|}{r_s}
    \leq  2\frac{\sqrt{2T}\varepsilon}{t_0}(1+r_s^2)^{1-N/2}\leq \sqrt{2}\leq \log 4
\end{align*}
where in the last-but-one step we have used  \eqref{eq:T0}.
Hence, for any $s\in[0,1]$,
\begin{align*}
    \log\frac{\zeta(s)}{\zeta(0)}=\int_0^s \frac{\partial_s\zeta(\sigma)}{\zeta\sigma)}d\sigma\leq  \log 4
\end{align*}
and therefore,
\begin{align}\label{eq:rsvsr0}
    \frac{r_s^2}{1+r_s^2}\leq 4 \frac{r_0^2}{1+r_0^2}.
\end{align}

By \eqref{eq:estimatehr} and \eqref{eq:estimatehrr}, we observe
\begin{align*}
|\partial_{rs}g_\theta(r,s)|
    &=\frac{2\sqrt{T}\varepsilon}{T-\varepsilon^2}|\partial_r h(re^{i\theta})|
    \leq \frac{2\sqrt{T}\varepsilon}{T-\varepsilon^2} Nr(1+r^2)^{N/2},\\
    |\partial_{rr}g_\theta(r,s)|
    &\leq 
    2N\frac{t-\varepsilon^2}{T-\varepsilon^2}\Big((1+r^2)^{N-1}+2(N-1)r^2(1+r^2)^{N-2}\Big)+\frac{2\sqrt{T}\varepsilon}{T-\varepsilon^2}s\left|\partial_{rr} h(r e^{i\theta})\right|\\
    &\leq {
    \frac{C}{T-\varepsilon^2}N(1+Nr^2)(1+r^2)^{N-2}}.\end{align*}
Using   these estimates, \eqref{eq:estimategr} and \eqref{eq:estimaters} in \eqref{eq:rss}, we   get
\begin{align*}
    |\partial_{ss} r_s|&\leq C\frac{T{\varepsilon^2}}{t_0^2}\left({(1+r_s^2)^{2-N}+\frac{1}{t_0}(1+r_s^2)^{1-N}(1+Nr^2)}\right)r_s.
\end{align*}
{Taking into account that the function  $(1+x)^{1-N}(1+Nx)$ attains its maximum at $x=(N(N-2))^{-1}$ for $N\geq 2$ and therefore can be bounded by $2$, we finally obtain  }
\begin{align}\label{eq:estimaterss}
    |\partial_{ss} r_s|& \leq C\frac{T\varepsilon^2}{t_0^3}Nr_s.
\end{align}

\noindent\textit{Step 5: Estimate of $m(E_1)$.}
Since $E_s$ is a star-shaped domain,  we can write its measure in terms of $r_s(\theta)$ as follows
\begin{align*}
    M(s):=m(E_s)=\frac{1}{2\pi}\int_0^{2\pi}\frac{r_s(\theta)^2}{1+r_s(\theta)^2}d\theta.
\end{align*}
Recalling that $r_s(\theta)$ is uniformly bounded and so are  $|\partial_sr_s(\theta)|$ and $|\partial_{ss}r_s(\theta)|$ according to \eqref{eq:estimaters} and \eqref{eq:estimaterss}, respectively, we can differentiate $M(s)$ under the integral obtaining 
\begin{align*}
    M'(s)
    &=\frac {1}\pi \int_0^{2\pi} \frac{r_s\partial_s r_s}{(1+r_s^2)^2}d \theta,\\
    M''(s)
    &=\frac {1}\pi \int_0^{2\pi}\left( \frac{(1-3r_s^2)(\partial_s r_s)^2}{(1+r_s^2)^3}+\frac{r_s\partial_{ss} r_s}{(1+r_s^2)^2}\right)d \theta.
\end{align*}

On the one hand, $M'(0)=0$. Indeed, by \eqref{eq:rs}, \eqref{eq:gr} and recalling that $r_0(\theta)=r_0$, we have
\begin{align*}
    M'(0)=\frac{C_{T, N}}{(1+r_0^2)^{N+2}} \int_0^{2\pi} {h(r_0e^{i\theta})}d \theta,
\end{align*}
where $C_{T,N}$ is a constant depending on  $T$ and $N$.
By the mean value theorem for the (harmonic) function $h$, we know $\int_0^{2\pi} {h(r_0e^{i\theta})}d \theta=2\pi r_0 h(0)=0$, which concludes the proof of the claim.

On the other hand, using \eqref{eq:estimaters} and \eqref{eq:estimaterss} together with \eqref{eq:rsvsr0}, we can bound $M''(s)$ as follows
\begin{align*}
    |M''(s)|
    \leq C\int_0^{2\pi} \frac{|\partial_sr_s|^2+r_s|\partial_{ss}r_s|}{1+r_s^2}d\theta
    \leq  C\frac{T\varepsilon^2}{t_0^3} \int_0^{2\pi} \frac{r_s^2}{1+r_s^2} \diff \theta
    \leq C\frac{T\varepsilon^2}{t_0^3} M(0).
\end{align*}

Combining the previous observations with the Taylor's formula for $M(s)$,
we conclude that 
\begin{align}\label{eq:estimateMs}
    M(s)=M(0)+\frac{M''(\sigma_s)}{2}s^2\leq \left(1+C\frac{T\varepsilon^2}{t_0^3}\right)M(0),
\end{align}
where  $\sigma_s\in(0,s)$. 
Finally, we recall that 
\begin{align*}
    M(0)=\frac{r_0^2}{1+r_0^2}, \;\mbox{ where } (1+r_0^2)^N=\frac{T-\varepsilon^2}{t-\varepsilon^2}.
\end{align*}
Then,
\begin{align*}
    M(0)=1-\left(\frac{t-\varepsilon^2}{T-\varepsilon^2}\right)^{1/N},
\end{align*}
with $\varepsilon^2<t_0/2$ by \eqref{eq:T0}.
Let $f(x)={1-}\left(\frac{t-x}{T-x}\right)^{1/N}$ for $x<t/2$. By the mean value theorem, for any $x<t/2$ there exists some $\tilde x\in(0,x)$ such that 
\begin{align*}
    \frac{f(x)-f(0)}x=f'(\tilde x)=\frac{1}{N}\frac{T-t}{(t-\tilde x){(T-\tilde x) }}\left(1-f(\tilde x)\right) .
\end{align*}
Since $f$ is a non-decreasing function in $(0, t/2)$ and $t-\tilde x>t/2$, the following estimate holds:
\begin{align*}
    \frac{f(x)-f(0)}x
    &\leq \frac{4}{N{T}}\left(\frac{T}{t}-1\right)\left(1-f(0)\right)
    =\frac{4}{N{T}}\left(\frac{T}{t}-1\right)\left(\frac{t}{T}\right)^{1/N}\\
    & \leq 4  \left(\left(\frac{T}{t}\right)^{1/N}-1\right)\frac{{1}}{t}\left(\frac{t}{T}\right)^{1/N}
    =2\frac{1}{t} f(0).
\end{align*}
Taking into account that  $t\geq t_0$, we finally infer
\begin{align*}
    f(x)\leq\left(1+\frac{4}{t_0}x\right)f(0).
\end{align*}
Since $M(0)=f(\varepsilon^2)$, we conclude
\begin{align*}
    M(0)
    &\leq \left(1+2\frac{\varepsilon^2}{t_0}\right)\left(1-\left(\frac{t}{T}\right)^{1/N}\right)
\end{align*}
Combining this with \eqref{eq:estimateMs} for $s=1$ and recalling \eqref{eq:epsilonT}, the result follows with $C_0=\frac{C}{t_0^3}$ and $T_0=\max\{\frac12, t_0, T_0'\}$, where $T_0'$ satisfies the equality in \eqref{eq:T0}.
\end{proof}

As we have previously announced, the following lemma moves away from the series of lemmas in \cite{GGRT}*{Section 2}, but contains  results that resemble some in \cite{NicRic} for the Fock space.

\begin{lemma}\label{lem:on_mu}
    Let $P\in\mathcal P_N$ with $\|P\|_{N}=1$ and $T<1$.
    Then, 
    \begin{align}\label{desigualtatdiferencial}
       \mu'(t)\leq -\frac{1}{Nt}\big(1-\mu(t)\big), \quad t\in(0,T).
    \end{align}
    In addition, the functions
    \begin{align*}
        &\frac{\mu(t)-\mu_0(t)}{t^{1/N}} \quad\mbox{ and }\quad\frac{\int_0^t \big(\mu(\tau)-\mu_0(\tau)\big)d\tau}{t^{1+1/N}}
    \end{align*}
    are non-increasing in $(0, T)$.
\end{lemma}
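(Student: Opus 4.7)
The plan is to establish the differential inequality \eqref{desigualtatdiferencial} first; both monotonicity assertions then follow from it by elementary manipulations. Indeed,
\[
\frac{\mu(t)-\mu_0(t)}{t^{1/N}} \;=\; 1-\frac{1-\mu(t)}{t^{1/N}},
\]
so the first monotonicity is equivalent to $(1-\mu(t))/t^{1/N}$ being non-decreasing, which by logarithmic differentiation is exactly \eqref{desigualtatdiferencial}. Granted this, write $\mu(t)-\mu_0(t)=t^{1/N}\psi(t)$ with $\psi$ non-increasing; then $\int_0^t \tau^{1/N}\psi(\tau)\,d\tau \geq \psi(t)\int_0^t \tau^{1/N}\,d\tau = \tfrac{N}{N+1}\,t^{1+1/N}\psi(t)$, and a direct computation of the derivative of $t\mapsto\int_0^t(\mu-\mu_0)/t^{1+1/N}$ gives the second monotonicity.

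For \eqref{desigualtatdiferencial} the strategy is to combine the coarea formula, the isoperimetric inequality on the round two-sphere (whose stereographic image is $(\C,dm)$, of total area one), Cauchy--Schwarz, and a Green-type identity for $v\coloneqq \log u=2\log|P|-N\log(1+|z|^2)$. Fix a regular value $t\in(0,T)$, which is generic by Sard's theorem. The function $v$ is smooth on a neighborhood of $\{u=t\}$ since $P$ does not vanish there, and a direct computation using $\Delta\log(1+|z|^2)=4/(1+|z|^2)^2$ shows that, as distributions on $\C$,
\[
\Delta v \;=\; 4\pi\sum_j n_j\,\delta_{z_j}-\frac{4N}{(1+|z|^2)^2},
\]
where $\{z_j\}$ are the zeros of $P$ counted with multiplicities $n_j$. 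These zeros lie in $\{u=0\}\subset\{u<t\}$, so the divergence theorem (with outward normal derivative $\partial_\nu v=-|\nabla v|$ on $\partial\{u>t\}$) yields
\[
\int_{\{u=t\}}|\nabla v|\,d\sigma \;=\; -\int_{\{u>t\}}\Delta v\,dx\,dy \;=\; 4\pi N\mu(t).
\]

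The coarea formula, together with $dm=dx\,dy/(\pi(1+|z|^2)^2)$, reads $-\mu'(t)=\int_{\{u=t\}}d\sigma/(\pi(1+|z|^2)^2|\nabla u|)$, while the spherical isoperimetric inequality on the sphere of area one gives
\[
\Bigl(\int_{\{u=t\}}\frac{d\sigma}{\sqrt\pi(1+|z|^2)}\Bigr)^{\!2} \;\geq\; 4\pi\,\mu(t)\bigl(1-\mu(t)\bigr).
\]
Applying Cauchy--Schwarz to the integral on the left with factors $1/(\sqrt\pi(1+|z|^2)\sqrt{|\nabla u|})$ and $\sqrt{|\nabla u|}$, and using $|\nabla u|=t|\nabla v|$ on $\{u=t\}$ together with the Green identity above, one obtains
\[
4\pi\mu(t)\bigl(1-\mu(t)\bigr) \;\leq\; (-\mu'(t))\int_{\{u=t\}}|\nabla u|\,d\sigma \;=\; 4\pi Nt\,\mu(t)\,(-\mu'(t)),
\]
which rearranges to \eqref{desigualtatdiferencial}. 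The main delicate point is the passage from regular values to all $t\in(0,T)$, but since $u$ is real-analytic the distribution function $\mu$ is absolutely continuous, so the a.e.\ inequality suffices to deduce the two monotonicity statements by integration.
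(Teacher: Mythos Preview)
Your proof is correct and follows essentially the same route as the paper: the differential inequality is obtained via the coarea formula, the spherical isoperimetric inequality, Cauchy--Schwarz, and the Green identity for $v=\log u$, which is precisely the content of \cite{KNOCT}*{Theorem~1.1} that the paper invokes as a black box; the two monotonicity statements are then deduced from \eqref{desigualtatdiferencial} by the same elementary computations. The only difference is that you unwrap the cited result and give a self-contained argument in the plane.
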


\begin{proof}
     Let $v=\frac{1}{2} \log u$ and $\nu(t)=m(\{v(z)>t\})$.
    We seek to apply Theorem 1.1 in \cite{KNOCT}. 
    Firstly, we notice that $\Delta_M=\pi(1+|z|^2)^2\Delta$, where $\Delta$ is the ordinary Euclidean Laplacian.
    Then, 
    \begin{align*}
        \Delta_M v={\pi}(1+|z|^2)^2\left(
        \Delta \left(\log |P(z)|\right)-\frac{N}{2}\Delta\left(\log\left(1+|z|^2\right)\right)\right).
    \end{align*}
    Since $\Delta =4\partial_{z}\partial_{\bar z} $, we achieve 
    \begin{align*}
        \Delta_M v 
        =-2\pi N.
    \end{align*}

    Secondly, $H(x)=4\pi x\left(1-x\right)$ by the isoperimetric inequality for the 2-dimensional sphere of radius $\frac{1}{2\sqrt{\pi}}$ (see e.g., \cite{Oss78}).
     Therefore, by \cite{KNOCT}*{Theorem 1.1}, 
    \begin{align}\label{eq:nu'}
        \nu'(t)\leq -2\frac{1}{N} \left(1-\nu(t)\right), \quad t\in\left(-\infty, \frac{1}{2}\log T\right).
    \end{align}
    Since $\nu(t)=\mu(e^{2t})$,  estimate \eqref{desigualtatdiferencial} follows. 

    Let $F(t)=\frac{\mu(t)-\mu_0(t)}{t^{1/N}}$. Then
    \begin{align*}
        F'(t)=\frac{1}{t^{1/N}}\left(\mu'(t)-\frac{1}{Nt}\mu(t)-\left(\mu_0'(t)-\frac{1}{Nt}\mu_0(t)\right)\right).
    \end{align*}
    Applying \eqref{desigualtatdiferencial}, which becomes an equality  for $\mu_0(t)$, we conclude that $F'(t)\leq 0$.  

    Finally, we have that 
    \begin{align*}
    \left(\frac{\int_0^t \big(\mu(\tau)-\mu_0(\tau)\big)d\tau}{t^{1+1/N}}\right)'
    =\frac{1}{t} G(t),
    \end{align*}
    where 
    \begin{align*}
        G(t)
        &=\frac{\mu(t)-\mu_0(t)}{t^{1/N}}-\left(1+\frac{1}{N}\right)\frac{\int_0^t \big(\mu(\tau)-\mu_0(\tau)\big)d\tau}{t^{1+1/N}}\\
        &=F(t)-\left(1+\frac{1}{N}\right)\frac{\int_0^t F(\tau)\tau^{1/N}d\tau}{t^{1+1/N}}.
    \end{align*}
    Taking into account that $F(t)$ is non-increasing, we infer 
    \begin{align*}
        G(t)
        &\leq F(t)-\left(1+\frac{1}{N}\right)\frac{F(t)\int_0^t \tau^{1/N}d\tau}{t^{1+1/N}}=0.
    \end{align*}
\end{proof}

\begin{lemma}\label{lem:unique_t*}
    Let $P\in\mathcal P_N$ with $\|P\|_{N}=1$ and $T<1$.
    Then there exists a unique $t^*\in(0, T)$ such that 
    \begin{align}\label{eq:t*}
    \begin{split}
        \mu(t)\geq \mu_0(t) & \mbox{ if } t\in(0,t^*],\\
        \mu(t)\leq \mu_0(t) & \mbox{ if } t\in[t^*, T].
        \end{split}
    \end{align}
    In addition there exists a universal  constant $T^*\in(0,1)$ (independent of $N$) such that $t^*\leq T^*$.
\end{lemma}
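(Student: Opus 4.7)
The plan is to produce $t^*$ as the sign-change point of the non-increasing function $F(t) := (\mu(t) - \mu_0(t))/t^{1/N}$ (whose monotonicity is given by Lemma~\ref{lem:on_mu}), and then to control $t^*$ by a universal constant $T^* < 1$ by combining this with the super-level set estimate of Lemma~\ref{lemma:super-level}.

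First I would establish existence and uniqueness of $t^*$. Since $\|P\|_{N} = \|1\|_{N} = 1$, the layer-cake formula yields $\int_0^T \mu(t)\,dt = \int_0^1 \mu_0(t)\,dt = 1/(N+1)$, hence
\begin{equation*}
\int_0^T \bigl(\mu(t) - \mu_0(t)\bigr)\,dt \;=\; \int_T^1 \mu_0(t)\,dt \;>\; 0
\end{equation*}
because $T < 1$. Thus $F(t) > 0$ for some $t \in (0, T)$. On the other hand, $\mu(T) = 0$ (since $T = \sup u$) while $\mu_0(T) > 0$, so $F(T) < 0$. Because $\mu$ is continuous (the level set where $u = t$ is a real-algebraic curve of zero planar measure for every $t$) and $F$ is non-increasing, the set where $F \geq 0$ is a closed initial segment whose right endpoint is the unique $t^* \in (0, T)$ satisfying \eqref{eq:t*}; by continuity one has $\mu(t^*) = \mu_0(t^*)$.

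For the universal bound, I would fix once and for all some $t_0 \in (0, 1)$ (say $t_0 = 1/2$) and denote by $T_0 = T_0(t_0) \in (t_0, 1)$ and $C_0 = C_0(t_0) > 0$ the constants supplied by Lemma~\ref{lemma:super-level}. If $T \leq T_0$ or $t^* \leq t_0$, then trivially $t^* \leq T_0$. Otherwise $T > T_0$ and $t^* > t_0$, so Lemma~\ref{lemma:super-level} applies at $t = t^*$; combined with $\mu(t^*) \geq \mu_0(t^*) = 1 - (t^*)^{1/N}$ it gives
\begin{equation*}
\bigl(1 + C_0(1-T)\bigr)\bigl(1 - (t^*/T)^{1/N}\bigr) \;\geq\; 1 - (t^*)^{1/N}.
\end{equation*}
Writing $\alpha = (t^*)^{1/N}$, $\beta = T^{1/N}$ and using $1 - \beta^N = (1 - \beta)\sum_{k=0}^{N-1}\beta^k \leq N(1 - \beta)$, this rearranges to $\alpha/\beta \leq C_0 N/(1 + C_0 N)$, hence
\begin{equation*}
\frac{t^*}{T} \;=\; \Bigl(\frac{\alpha}{\beta}\Bigr)^N \;\leq\; \Bigl(1 - \frac{1}{1 + C_0 N}\Bigr)^N \;\leq\; \exp\!\Bigl(-\frac{N}{1 + C_0 N}\Bigr) \;\leq\; \exp\!\Bigl(-\frac{1}{1 + C_0}\Bigr),
\end{equation*}
the last inequality using that $N \mapsto N/(1 + C_0 N)$ is increasing for $N \geq 1$. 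One then takes $T^* := \max\{T_0,\, e^{-1/(1+C_0)}\} < 1$, which depends only on the choice of $t_0$ and is therefore universal.

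The main obstacle is precisely this uniform-in-$N$ step. A direct use of Lemma~\ref{lemma:super-level} yields only $\alpha/\beta \leq C_0 N/(1 + C_0 N)$, a bound that tends to $1$ as $N \to \infty$; the saving is that $\alpha/\beta$ is the $N$-th root of $t^*/T$, so raising to the $N$-th power converts this near-trivial inequality into genuine exponential decay $e^{-1/(1+C_0)}$ that is independent of $N$. Everything else is bookkeeping around Lemma~\ref{lemma:super-level} and the differential inequality for $\mu$ provided by Lemma~\ref{lem:on_mu}.
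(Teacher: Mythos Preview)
Your existence argument (the layer-cake identity $\int_0^T\mu=\int_0^1\mu_0=1/(N+1)$ forcing $F>0$ somewhere, together with $F(T)<0$ and the monotonicity of $F$ from Lemma~\ref{lem:on_mu}) and your derivation of the universal bound are correct and follow the same route as the paper. The paper likewise plugs Lemma~\ref{lemma:super-level} into the identity $\mu(t^*)=\mu_0(t^*)$ to obtain $(t^*/T)^{1/N}\le C_0N/(1+C_0N)$ and then notes that $\bigl(C_0N/(1+C_0N)\bigr)^N$ is bounded away from $1$ uniformly in $N$; your bound $e^{-1/(1+C_0)}$ is a harmless variant of the paper's $C_0/(1+C_0)$.

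The gap is in \emph{uniqueness}. Monotonicity and continuity of $F$ only tell you that $\{F\ge0\}$ and $\{F\le0\}$ are overlapping subintervals of $(0,T]$; they do not exclude $F\equiv0$ on a non-degenerate interval $[a,b]$, in which case every point of $[a,b]$ satisfies \eqref{eq:t*} and $t^*$ is not unique. The paper closes this by a rigidity argument: if $\mu=\mu_0$ on an open interval then equality holds in the differential inequality \eqref{desigualtatdiferencial}, and tracing the equality case in the proof of \cite{KNOCT}*{Theorem~1.1} shows that the super-level sets $\{v>t\}$ must be concentric discs with $|\nabla v|$ constant on each boundary circle and $\Delta_M v=-2\pi N$ exactly inside; this forces $u(z)=(1+|z|^2)^{-N}$ up to a M\"obius rotation, contradicting $T<1$. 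That equality-case analysis is a genuine extra ingredient that your sketch does not provide. (For the downstream applications in Lemmas~\ref{lem:upper}--\ref{lem:lower} and Theorem~\ref{thm:Wehrl} any $t^*$ with the crossing property would do, so the omission does not derail the overall program, but the lemma as stated is not yet proved.)
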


Notice that not only $\mu(t^*)=\mu_0(t^*)$, but  also $\mu(0)=\mu_0(0)=1$. In addition, in the proof it can be seen that a sharper upper bound for $t^*$ could be given if we allow it to depend on $N$. 

\begin{proof}[Proof of Lemma~\ref{lem:unique_t*}]
    By the second part of Lemma~\ref{lem:on_mu}, we know that  $\{t\in(0,1): \mu(t)=\mu_0(t)\}$ is a connected, non-empty interval. 
    We now prove that it has an empty interior, arguing by contradiction. 
    
    Let us assume that $\mu(t)=\mu_0(t)$ for $t\in(t_1, t_2)\subset(0, T)$ and $T<1$ and let $\nu(t)=\mu(e^{2t})$, which satisfies \eqref{eq:nu'}. 
    By hypothesis, $\nu(t)=(1-e^{2t/N})$ for $t\in I=(e^{2t_1}, e^{2t_2})$, so the equality is attained in  \eqref{eq:nu'} for $t\in I$. 
    Let $A_t=\{v(z)>t\}$. From the proof of \cite{KNOCT}*{Theorem~1.1}, we conclude that the equality is achieved provided $A_t$ is a disc, $|\nabla v|$ is constant on $\partial A_t$ and $\Delta_M v(z) = -2\pi N$ if $z\in A_t$, for $t\in I$.

    We firstly see that  $A_t$ are concentric discs for $t\in I$. Assume $A_t=\mathbb D_{r_t}(z_t)$.
    Since $v(z_t+\omega r_t)=t$ and $\nabla v(z_t+\omega r_t) = c_t \omega$ for all $\omega\in\mathbb S^1$, we have
    \[1=\nabla v(z_t+\omega r_t) \cdot \partial_t(z_t+\omega  r_t)= c_t(\partial_t z_t \cdot \omega +\partial r_t).\]
    Since this must hold for any $\omega$,  we conclude that $\partial_t z_t=0$, i.e., $z_t$ does not depend on $t$. 
    We may assume without loss of generality that $z_t = 0$.

    Now, we note that  $\Delta_M v = -2\pi N$ in $A_t$ implies that $v + \frac{N}{2}\log(1+|z|^2)$ is harmonic and radial. Since it is constant  on $\partial A_t$, it is constant in $A_t$. This means that $u$ must be of the form $u(z) = \frac 1{(1+|z|^2)^N}$, and this contradicts that $T<1$.

    In order to see the existence of an universal upper bound for $t^*$, let us apply Lemma~\ref{lemma:super-level} with $t_0=\frac 12$.
    If $t^*\geq t_0=\frac 12$ and $T\geq T_0$, where $T_0$ is the threshold for $t_0=\frac12$,  we can apply  \eqref{newestmu} at $t=t^*$ with $C_0=C_0(\frac12)$. Since $\mu(t^*)=\mu_0(t^*)$ we have
    \begin{align*}
        1-(t^*)^{1/N}\leq \big(1+C_0(1-T)\big)\left(1-\left(\frac{t^*}{T}\right)^{1/N}\right).
    \end{align*}
    This implies
    \begin{align*}
        t^*\leq \left(\frac{C_0T^{1/N}(1-T)}{1-T^{1/N}+C_0(1-T)}\right)^N.
    \end{align*}
    Taking into account that the right-hand side is an increasing function of $T$ in $(0,1)$, we  infer
     \begin{align*}
        t^*\leq \left(\frac{C_0N}{1+C_0N}\right)^N.
    \end{align*}
    Finally, we notice that the right-hand side decreases with $N$, so
    \begin{align*}
        t^*\leq \frac{C_0}{1+C_0}.
    \end{align*}

    If $t^*<t_0=\frac 12$ or $T<T_0$, then it also holds that 
    \begin{align*}
        t^*\leq T^*:=\max\left\{\frac 12, T_0, \frac{C_0}{1+C_0}\right\}<1.
    \end{align*}

\end{proof}

\begin{lemma}\label{lem:upper}
    Let $P\in\mathcal P_N$ with $\|P\|_{N}=1$. For every $t_0\in(0,1)$ there holds
    \begin{align}\label{eq:upper}
        \int_{t^*}^1\left(\mu_0(t)-\mu(t)\right)dt
        \leq 
      \left(1-\mu(t_0)\right)^{-(N+1)}\left(\frac{1}{N+1}-\frac{\int_{\{u(z)>t_0\}}u(z)dm(z)}{1-\left(1-\mu(t_0)\right)^{N+1}}\right),
    \end{align}
    where $t^*$ is the unique value in $(0,T)$ satisfying \eqref{eq:t*}.
\end{lemma}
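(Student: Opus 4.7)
The plan is to reparameterize both sides of \eqref{eq:upper} via the decreasing rearrangement $v^*:[0,1]\to[0,T]$ of $u$ on $(\C,dm)$. By the layer-cake formula, $\int_{\{u>t\}}u\,dm=\int_0^{\mu(t)}v^*(y)\,dy$ and $\int_0^1 v^*(y)\,dy=\int u\,dm=\tfrac{1}{N+1}$. Introduce
\[
F(t):=\frac{1-(1-\mu(t))^{N+1}}{N+1}-\int_{\{u>t\}}u\,dm,\qquad \Psi(y):=\int_0^y\!\bigl[(1-x)^N-v^*(x)\bigr]\,dx.
\]
Since $\int_0^y(1-x)^N\,dx=(1-(1-y)^{N+1})/(N+1)$, we have $F(t)=\Psi(\mu(t))$. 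A direct calculation using $\mu(t^*)=1-(t^*)^{1/N}$ together with $\int_0^T\mu=\int_0^1\mu_0=\tfrac{1}{N+1}$ identifies the left-hand side of \eqref{eq:upper} as $F(t^*)=\Psi(y^*)$, where $y^*:=\mu(t^*)$. A short algebraic manipulation writes the right-hand side as $\Psi(y_0)/\bigl[(1-y_0)^{N+1}(1-(1-y_0)^{N+1})\bigr]$ with $y_0:=\mu(t_0)$. Thus the lemma reduces to
\[
\Psi(y^*)\;\le\;\frac{\Psi(y_0)}{(1-y_0)^{N+1}\bigl(1-(1-y_0)^{N+1}\bigr)}.
\]

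The central analytic input is the monotonicity of $w(y):=v^*(y)/(1-y)^N$: it is non-decreasing on $[0,1)$. Indeed, Lemma~\ref{lem:on_mu} asserts that $(1-\mu(t))/t^{1/N}$ is non-decreasing in $t$; transcribing through $y=\mu(t)$, $t=v^*(y)$ (noting $\mu$ is non-increasing) yields the monotonicity. Moreover $(1-y^*)^N=t^*=v^*(y^*)$, hence $w(y^*)=1$, so $w\le 1$ on $[0,y^*]$ and $w\ge 1$ on $[y^*,1]$; correspondingly $\Psi(0)=\Psi(1)=0$, with $\Psi$ non-decreasing on $[0,y^*]$ and non-increasing on $[y^*,1]$.

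The proof then splits into the symmetric cases $y_0\le y^*$ and $y_0\ge y^*$. For $y_0\le y^*$, the monotonicity of $w$ yields the lower bound $\Psi(y_0)\ge(1-w(y_0))\,(1-(1-y_0)^{N+1})/(N+1)$ (from $1-w(x)\ge 1-w(y_0)\ge 0$ on $[0,y_0]$) and the upper bound $\Psi(y^*)-\Psi(y_0)\le(1-w(y_0))\,((1-y_0)^{N+1}-(1-y^*)^{N+1})/(N+1)$ (from $0\le 1-w(x)\le 1-w(y_0)$ on $[y_0,y^*]$); eliminating $1-w(y_0)$ gives $\Psi(y^*)\le\Psi(y_0)\cdot(1-(1-y^*)^{N+1})/(1-(1-y_0)^{N+1})$. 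For $y_0\ge y^*$, using $\Psi(1)=0$ to write $\Psi(y_0)=\int_{y_0}^1(1-x)^N(w(x)-1)\,dx$ and applying the analogous two-sided estimate on $[y^*,y_0]$ and $[y_0,1]$ produces $\Psi(y^*)\le\Psi(y_0)\cdot(1-y^*)^{N+1}/(1-y_0)^{N+1}$. In both cases the prefactor is dominated by $1/[(1-y_0)^{N+1}(1-(1-y_0)^{N+1})]$ via the trivial bounds $1-(1-y^*)^{N+1}\le 1\le (1-y_0)^{-(N+1)}$ and $(1-y^*)^{N+1}\le 1\le (1-(1-y_0)^{N+1})^{-1}$, completing the argument.

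The main obstacle I anticipate is the initial bookkeeping to cast both sides uniformly as values of $\Psi$; once this is in place, the argument reduces to elementary monotonicity tracking of $w$ against integrals of $(1-x)^N$, and the two cases are formally symmetric around $y^*$.
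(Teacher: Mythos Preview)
Your proof is correct and follows essentially the same route as the paper: both pass to the variable $y=\mu(t)$ (the paper writes $s$), recast the two sides as values of $\Psi(y)=\int_0^y\bigl(\mu_0^{-1}-\mu^{-1}\bigr)$ (the paper's $I(0,s)$), exploit the monotonicity of $w(y)=v^*(y)/(1-y)^N$ (the paper's $\rho(s)=\mu^{-1}(s)/\mu_0^{-1}(s)$) drawn from Lemma~\ref{lem:on_mu}, and handle the two cases $y_0\lessgtr y^*$ by the same pair of upper/lower bounds. The only cosmetic difference is that in the case $y_0\ge y^*$ the paper factors $\mu_0^{-1}-\mu^{-1}=\mu^{-1}(1/\rho-1)$ whereas you factor it as $(1-x)^N(1-w)$; both lead to the same estimate.
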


\begin{proof}
     Let $s^*=\mu(t^*)=\mu_0(t^*)$. Then 
    \begin{align*}
        I&= \int_{t^*}^1\left(\mu_0(t)-\mu(t)\right)dt
        =\int_{t^*}^1\left(\mu'(t)-\mu_0'(t)\right)tdt
         \\&=\int_{0}^{s^*}\left(\mu_0^{-1}(s)-\mu^{-1}(s)\right)ds,
    \end{align*}
    where for the last identity we have applied suitable changes of variables. Recall that 
    $$\mu_0^{-1}(s)=\left(1-s\right)^N.$$

    Firstly, we observe that  the function 
    \begin{align*}
        \rho(s)=\frac{\mu^{-1}(s)}{\mu_0^{-1}(s)}, \;\; s\in[0,1).
    \end{align*}
    is non-decreasing. 
    Indeed, the sign of $\rho'(s)$ is determined by the sign of 
    \begin{align*}
        (\mu^{-1})'(s)-\frac{(\mu_0^{-1})'(s)}{\mu_0^{-1}(s)}\mu^{-1}(s)
        =\frac{1}{\mu'\big(\mu^{-1}(s)\big)}+\frac{N}{1-s}\mu^{-1}(s).
    \end{align*}
    Taking $t=\mu^{-1}(s)$, $\rho'(s)\geq0$ for $s\in[0,1)$ if and only if
    \begin{align*}
       \frac{1}{\mu'(t)}+\frac{Nt}{1-\mu(t)}\geq0 \;\mbox{ for } t\in(0,T),
    \end{align*}
    which holds by Lemma~\ref{lem:on_mu}.

    For any $0\leq s_1 <s_2\leq 1$, let $I(s_1, s_2)=\int_{s_1}^{s_2}\left(\mu_0^{-1}(s)-\mu^{-1}(s)\right)ds$.
    With this notation, \eqref{eq:upper} is equivalent to
    \begin{align*}
        I\leq \frac{\left(1-\mu(t_0)\right)^{-(N+1)}}{N+1}\eta(\mu, t_0)
    \end{align*}
    where 
    \begin{align*}
        I&=I(0, s^*)=-I(s^*, 1),\\
        \eta(\mu, t_0)&=\frac{I(0, s_0)}{\int_0^{s_0} \mu_0^{-1}(s)ds}, \quad s_0=\mu(t_0).
    \end{align*}

    \noindent\emph{Case 1: $t_0\geq t^*$, i.e. $s_0\leq s^*$.}
    Taking into account the monotonicity of $\rho$, we obtain
    \begin{align*}
        I(0, s_0)&\geq \big(1-\rho(s_0)\big) \int_{0}^{s_0} \mu_0^{-1}(s)ds,
        \\
        I(s_0, s^*)&\leq \big(1-\rho(s_0)\big)\int_{s_0}^{s^*}\mu_0^{-1}(s)dt.
    \end{align*}
    Combining the previous inequalities, we infer
    \begin{align*}
        I&=I(0, s^*)=I(0, s_0)+I(s_0, s^*)
        \leq \left(1+\frac{\int_{s_0}^{s^*}\mu_0^{-1}(s)ds}{\int_0^{s_0}\mu_0^{-1}(s)ds}\right) I(0, s_0)\\
        &\leq \frac{\int_{0}^{s^*}\mu_0^{-1}(s)ds}{\int_0^{s_0}\mu_0^{-1}(s)ds} I(0, s_0)
        \leq \frac{\frac{1}{N+1}}{\int_0^{s_0}\mu_0^{-1}(s)ds} I(0, s_0)\leq\frac{1}{N+1}\eta(\mu, t_0).
    \end{align*}

    \emph{Case 2: $t_0\leq t^*$, i.e. $s_0\geq s^*$.} 
    Arguing as in the previous case, we have 
    \begin{align*}
        I(s^*, s_0)&\geq \Big(\frac{1}{\rho(s_0)}-1\Big) \int_{s^*}^{s_0} \mu^{-1}(s)ds,
        \\
        I(s_0,1)&\leq \Big(\frac{1}{\rho(s_0)}-1\Big) \int_{s_0}^{1} \mu^{-1}(s)ds.
    \end{align*}
    and hence, since $\rho(s^*)=1$,
    \begin{align*}
        I&=-I(s^*, 1)=-I(s^*, s_0)-I(s_0, 1)
        \leq - \left(1+\frac{\int_{s^*}^{s_0}\mu^{-1}(s)ds}{\int_{s_0}^{1}\mu^{-1}(s)ds}\right) I(s_0, 1)\\
        &\leq \frac{\int_{s^*}^{1}\mu^{-1}(s)ds}{\int_{s_0}^{1}\mu^{-1}(s)ds}I(0, s_0)
        \leq  \frac{\frac{1}{N+1}}{\int_{s_0}^{1}\mu_0^{-1}(s)ds}I(0, s_0)= 
        \frac{C(s_0)}{N+1}\eta(\mu, t_0),
    \end{align*}
    where 
    \begin{align*}
         C(s_0)&=\frac{\int_0^{s_0}\mu_0^{-1}(s)ds}{\int_{s_0}^{1}\mu_0^{-1}(s)ds}
        =\left(1-s_0\right)^{-(N+1)}-1\leq \left(1-s_0\right)^{-(N+1)}.
    \end{align*}
    
\end{proof}

\begin{lemma}\label{lem:lower}
    There exists a constant $C\in(0,1)$   such that for any  $P\in\mathcal P_N$ with $\|P\|_{N}=1$ it holds
    \begin{align}
        \int_{t^*}^1\left(\mu_0(t)-\mu(t)\right)dt
        \geq \frac{C}{N}(1-T),
    \end{align}
    where $t^*$ is the unique value in $(0,T)$ satisfying \eqref{eq:t*}.
\end{lemma}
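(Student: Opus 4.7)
The plan is to split the integral by observing that $u\le T$, so $\mu\equiv 0$ on $[T,1]$:
\[
\int_{t^*}^1 \bigl(\mu_0(t)-\mu(t)\bigr)\,dt \;=\; \int_{t^*}^T \bigl(\mu_0(t)-\mu(t)\bigr)\,dt \;+\; \int_T^1 \bigl(1-t^{1/N}\bigr)\,dt,
\]
both pieces being nonnegative. For the tail I would use convexity of $\mu_0$ and the tangent at $t=1$ to get $1-t^{1/N}\ge (1-t)/N$, hence $\int_T^1(1-t^{1/N})\,dt\ge (1-T)^2/(2N)$. This already proves the lemma in an ``easy'' regime $T\le T_0$ for any fixed $T_0<1$, because then $(1-T)^2\ge (1-T_0)(1-T)$.

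\textbf{The delicate regime $T>T_0$.} The tail bound is weaker than the target by a factor $1-T$, so one must extract a quantitative gap from $[t^*,T]$ itself. The tool will be Lemma~\ref{lemma:super-level}: fix once and for all $t_0\in(T^*,1)$ (with $T^*$ the universal constant of Lemma~\ref{lem:unique_t*}), and let $T_0=T_0(t_0)\in(t_0,1)$ and $C_0=C_0(t_0)>0$ be the resulting threshold and constant. Then, for $T\ge T_0$ and $t\in[t_0,T]$,
\[
\mu_0(t)-\mu(t)\;\ge\;\Phi(t)\;:=\;(1-t^{1/N})-\bigl(1+C_0(1-T)\bigr)\bigl(1-(t/T)^{1/N}\bigr).
\]
Writing $\Phi(t)=B\,t^{1/N}-C_0(1-T)$ with $B=(1+C_0(1-T)-T^{1/N})/T^{1/N}>0$ shows that $\Phi$ is strictly increasing and concave on $(0,T]$, with $\Phi(T)=1-T^{1/N}\ge (1-T)/N$.

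\textbf{Uniform width of the gap and conclusion.} Solving $\Phi(t_1)=0$ would give $t_1=Ty^N$ with $y=C_0(1-T)/\bigl((1-T^{1/N})+C_0(1-T)\bigr)$; using $1-T^{1/N}\ge (1-T)/N$ one bounds $y\le C_0N/(1+C_0N)$, hence $y^N\le e^{-N/(1+C_0N)}\le e^{-1/(1+C_0)}$, \emph{independently of $N$}. In particular $T-t_1$ is bounded below by a universal positive constant. Setting $t_\ast:=\max(t_0,t_1)\ge t_0>T^*\ge t^*$, one has $\Phi(t_\ast)\ge 0$, and the chord lower bound for the concave $\Phi$ on $[t_\ast,T]$ yields
\[
\int_{t_\ast}^T\Phi(t)\,dt\;\ge\;\tfrac{1}{2}(T-t_\ast)\Phi(T)\;\ge\;\frac{\eta(1-T)}{2N}
\]
for a universal $\eta>0$. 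Since $\mu_0-\mu\ge 0$ on $[t^*,T]\supset[t_\ast,T]$, this closes the delicate regime, and together with the easy regime yields the lemma with $C=\min(\eta/2,\,(1-T_0)/2)$.

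\textbf{Main obstacle.} The crux is the missing factor $1-T$: the bare tail estimate yields only $(1-T)^2/N$. Lemma~\ref{lemma:super-level} is precisely what is needed, since it provides a pointwise gap $\mu_0-\mu\gtrsim (1-T)/N$ on an interval of length bounded below \emph{uniformly in $N$} (rather than shrinking like $1-T$ as $T\to 1$). Concavity of $\Phi$ is the final ingredient: it converts the pointwise value $\Phi(T)\asymp (1-T)/N$ at the right endpoint into an integrated lower bound of the correct order.
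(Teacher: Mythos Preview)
Your proof is correct and follows essentially the same approach as the paper: split into an easy regime $T\le T_0$ handled by the tail bound $\int_T^1\mu_0\ge(1-T)^2/(2N)$, and a hard regime $T>T_0$ where Lemma~\ref{lemma:super-level} yields a pointwise gap $\mu_0-\mu\gtrsim(1-T)/N$ on an interval of $N$-independent length. The only cosmetic difference is that the paper rewrites the gap as $\frac{1-T}{N+1}\gamma(t)$ and integrates over a fixed subinterval $[T_1,T_2]$ where $\gamma>0$, while you locate the zero $t_1$ of $\Phi$ explicitly and use concavity to get the same conclusion.
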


\begin{proof}
    Let $T^*$ be the universal bound for $t^*$ in Lemma~\ref{lem:unique_t*}.
    Then, provided $T>T^*$,
    \begin{align*}
        I=\int_{t^*}^1\left(\mu_0(t)-\mu(t)\right)dt\geq \int_{T^*}^T\left(\mu_0(t)-\mu(t)\right)dt.
    \end{align*}
    Now we apply Lemma~\ref{lemma:super-level} with $t_0=T^*$. 
    Then there exists $T_0\geq T^*$ such that if $T\geq T_0$, for $t\in[T^*, T]$ it holds
    \begin{align*}
        I\geq \int_{T^*}^T g(t)dt,
    \end{align*}
    where 
    \begin{align*}
        g(t)
        &=1-t^{1/N}-\big(1+C_0(1-T)\big)\left(1-\left(\frac{t}{T}\right)^{1/N}\right)\\
        &=\left(1-T^{1/N}\right)\left(\frac{t}{T}\right)^{1/N}-C_0(1-T)\left(1-\left(\frac{t}{T}\right)^{1/N}\right).
    \end{align*}
    Using that $(N+1)(1-T^{1/N})\geq {1-T}$ and $T<1$,
    \begin{align*}
        g(t)
        &\geq \frac{1}{N+1}(1-T)\left(t^{1/N}-C_0(N+1)\left(1-t^{1/N}\right)\right)=\frac{1-T}{N+1}\gamma(t).
    \end{align*}
    Notice that $\gamma(t)$ is an increasing function with $\gamma(1)=1$. Let  $T_1\in (T_0, 1)$ be sufficiently close to $1$ so $\gamma(T_1)=c_1>0$.
    Finally, let us fix  $T_2\in(T_0,T_1)$ large.
    If $T>T_2$,
    \begin{align*}
        I\geq \int_{T_1}^{T_2} g(t)dt\geq \frac{1-T}{N+1} \gamma(T_1)(T_2-T_1)
        =\frac{c_1(T_2-T_1)}{N+1}(1-T). 
    \end{align*}

    If $T\leq T_2$ (which in particular includes the case $T<T^*$), we  exploit that $t^*<T$ to conclude 
    \begin{align*}
    \begin{split}
    I&\geq \int_{T}^1 \big(\mu_0(t)-\mu(t)\big)dt
        =\int_T^1\left(1-t^{1/N}\right)dt
        \\&=(1-T)-\frac{N}{N+1}(1-T^{1+1/N})\geq\frac{(1-T)^2}{2N}\geq \frac{1-T_2}{2N}(1-T).
        \end{split}
    \end{align*}
    Therefore, taking $C=\min\left\{c_1(T_2-T_1), \frac{1-T_2}{2} \right\}$, the desired estimate holds.

\end{proof}

\section{Proof of the main results}
\label{sec:proofs}

In this section, we present the proofs of the results in Section~\ref{sec:intro}. In addition to the lemmas in Section~\ref{sec:superlevelsets} we need the following technicality:

\begin{lemma}\label{lem:valuemin}
    Let $P\in\mathcal{P}_N$ with $\|P\|_{N}=1$.
    Then 
    \begin{align*}
        D_N(P)^2=2(1-\sqrt{T}).
    \end{align*}
\end{lemma}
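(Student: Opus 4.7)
The plan is to expand the squared norm $\|P-e^{i\theta}\kappa_{N,a}\|_{N}^2$, eliminate $\theta$ by choosing the optimal phase, and then rewrite the inner product $\langle P,\kappa_{N,a}\rangle_N$ using the reproducing kernel property, so that the problem of minimizing over $a$ becomes the problem of maximizing $u(a)$.

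First I would use that $\|P\|_N = 1$ and that the normalized reproducing kernel satisfies $\|\kappa_{N,a}\|_N = 1$ (this follows from $\|k_N(\cdot,a)\|_N^2 = k_N(a,a) = (1+|a|^2)^N$). Expanding,
\begin{align*}
\|P - e^{i\theta}\kappa_{N,a}\|_N^2 = 2 - 2\Re\bigl(e^{-i\theta}\langle P, \kappa_{N,a}\rangle_N\bigr).
\end{align*}
For fixed $a$, the minimum over $\theta \in [0,2\pi]$ is attained by aligning the phase, giving $2 - 2|\langle P, \kappa_{N,a}\rangle_N|$.

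Next, the reproducing kernel property yields $\langle P, k_N(\cdot,a)\rangle_N = P(a)$, and dividing by $\|k_N(\cdot,a)\|_N = (1+|a|^2)^{N/2}$ gives
\begin{align*}
|\langle P,\kappa_{N,a}\rangle_N| = \frac{|P(a)|}{(1+|a|^2)^{N/2}} = \sqrt{u(a)}.
\end{align*}
Hence
\begin{align*}
D_N(P)^2 = \inf_{a\in\C}\bigl(2 - 2\sqrt{u(a)}\bigr) = 2 - 2\sup_{a\in\C}\sqrt{u(a)} = 2\bigl(1-\sqrt{T}\bigr).
\end{align*}

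The one subtlety to check is that the infimum in the definition of $D_N$ is indeed attained, so that one may write \emph{min} rather than \emph{inf}. Since $u$ is continuous on $\C$ and extends continuously to the Riemann sphere (with $u(\infty)$ equal to the squared modulus of the leading coefficient of $P$), the supremum $T$ is attained either at some finite $a_0 \in \C$, in which case we are done, or only at infinity. In the latter case one checks that $\kappa_{N,a} \to e^{-iN\arg a}\, z^N$ as $|a| \to \infty$ in $\mathcal P_N$, so the monomial $z^N = e_N$ (up to a unimodular factor) should be regarded as a limiting normalized reproducing kernel. This is the only mildly delicate point; granting this convention, the identity $D_N(P)^2 = 2(1-\sqrt{T})$ holds with the minimum attained.
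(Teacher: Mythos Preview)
Your proof is correct and follows essentially the same route as the paper: expand the squared norm, optimize over $\theta$ to replace the real part by the modulus, use the reproducing property to identify $|\langle P,\kappa_{N,a}\rangle_N|=\sqrt{u(a)}$, and then take the supremum over $a$. The only difference is that you add a paragraph addressing the attainment of the minimum (the case where $\sup u$ is reached only at infinity, e.g.\ $P=e_N$), a point the paper's proof passes over in silence; your remark there is accurate and, if anything, makes the argument more complete.
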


\begin{proof}
For any $a\in \C$ and $\theta\in\mathbb{S}^1$
    $$\|P-e^{i\theta}\kappa_a\|_{N}^2 = 2 (1-\Re \langle \kappa_a,
e^{-i\theta}P\rangle_{N})= 2 \left(1-\frac{\Re P(a) e^{-i\theta}}{(1+|a|^2)^{N/2}}\right).$$

Optimizing in $\theta$ one gets
\begin{align*}
    \min\{\|P-e^{i\theta}\kappa_a\|_{N}^2, \theta\in \mathbb{S}^1\}=2 \left(1-\frac{|P(a)|}{(1+|a|^2)^{N/2}}\right)
    =2\left(1-\sqrt{u(a)}\right).
\end{align*}
Since $\sup_{z\in\C}u(z)=T$, the result follows. 

\end{proof}

\begin{proof}[Proof of Theorem~\ref{thm:concentration}]
    Let $t_0\in(0,1)$ be such that  $\mu(t_0)=m(\Omega)$. Combining Lemmas~\ref{lem:valuemin},~\ref{lem:lower} and~\ref{lem:upper} we obtain 
    \begin{align*}
        D_N(P)^2\leq C\left(1-m(\Omega)\right)^{-(N+1)}\left(1-\frac{N+1}{1-\left(1-m(\Omega)\right)^{N+1}}\int_{\{u(z)>t_0\}} u(z) dm(z)\right).
    \end{align*}
    Finally, since  $m(\Omega)=m\big(\{u(z)>t_0\}\big)$, 
    \begin{align*}
        \int_{\{u(z)>t_0\}} u(z) dz\geq \int_{\Omega} u(z) dz
    \end{align*}
    and the claim follows.
\end{proof}

\begin{proof}[Proof of Proposition~\ref{prop:domains}]
    Throughout this proof, we skip the subindex $N$ in $\delta_N$ and $m$ in $\mathcal A_m$ and we define $K_\Omega=\left(1-m(\Omega)\right)^{-(N+1)}>1$.
    We seek then to prove that 
    \begin{align}\label{eq:goal}
        \mathcal A(\Omega)\leq C\frac{K_\Omega^{3/2}}{m(\Omega)} \delta(P, \Omega)^{1/2}.
    \end{align}
    
    Arguing as in Step 1 of the proof of Lemma~\ref{lemma:super-level},we assume
    \begin{align*}
        P(z)=\sqrt{T}+\varepsilon Q(z),
    \end{align*}
    with $\|Q\|_{N}=1$ and $\varepsilon^2=1-T$.
    By Lemmas~\ref{lem:upper} and~\ref{lem:lower}, 
    \begin{align}\label{eq:espilon}
        \varepsilon
        &\leq C_0 \big(K_\Omega\delta(P,\Omega)\big)^{1/2}.
    \end{align}
   
    If $\varepsilon\geq \frac{1}{50}\left(1-m(\Omega)\right)^{N+1}=\frac{1}{50 K_\Omega}$, applying  \eqref{eq:espilon} we directly achieve \eqref{eq:goal} since
    \begin{align*}
        \mathcal A(\Omega)
        &\leq \frac{1}{m(\Omega)}  
        \leq \frac{C}{m(\Omega)} K_\Omega\ \varepsilon \leq C K_\Omega^{3/2}\delta(P, \Omega)^{1/2}.
    \end{align*}

    From now on, we focus on the case  $\varepsilon< \frac{1}{50}\left(1-m(\Omega)\right)^{N+1}$,
    which in particular satisfies that 
    \begin{align}\label{eq:epsilonsmall}
       (1-\varepsilon^2) \left(1-m(\Omega)\right)^N-4\varepsilon>\frac{1}{2}\left(1-m(\Omega)\right)^N.
    \end{align}
    We divide the proof into different steps.

    \noindent\emph{Step 1: General considerations.}
    Let $t_\Omega\in(0,T)$ be such that $\mu(t_\Omega)=m(\Omega)$.
    We denote by $A_\Omega=\{z\in\C: u(z)\geq t_\Omega\}$ and  define
    \begin{align*}
        d(\Omega)=\int_{A_\Omega}u(z)dm(z)-\int_\Omega u(z)dm(z).
    \end{align*}
    By the qualitative inequality in Theorem~\ref{thm:concentration}
    \begin{align*}
        d(\Omega)
        \leq \int_{\Omega^*} u_0(z)dm(z)-\int_\Omega u(z)dm(z)
        =\frac{1-K_\Omega^{-1}}{N+1} \delta (P,\Omega).
    \end{align*}

    Moreover, for any $z\in\C$ it holds
    \begin{align}\label{eq:u-Tu0}
        Tu_0(z)-u(z)
        &=\frac{T-|P(z)|^2}{(1+|z|^2)^N}
        \leq \frac{|\sqrt{T}-P(z)|}{(1+|z|^2)^{N/2}}\frac{\sqrt{T}+|P(z)|}{(1+|z|^2)^{N/2}}
        \leq 2\sqrt{T}\varepsilon\leq 2 \varepsilon.
    \end{align}
    Hence,
    \begin{align*}
        \left\{z\in\C: u_0(z)\geq \frac{t_\Omega+2\varepsilon}{T}\right\}\subset A_\Omega\subset \left\{z\in\C: u_0(z)\geq \frac{t_\Omega-2\varepsilon}{T}\right\}.
    \end{align*}
    This implies
    \begin{align*}
    \mu_0\left(\frac{t_\Omega+2\varepsilon}{T}\right)\leq m(\Omega)\leq \mu_0\left(\frac{t_\Omega-2\varepsilon}{T}\right)
    \end{align*}
    and therefore,
    \begin{align*}
        T\left(1-m(\Omega) \right)^N-2\varepsilon\leq t_\Omega\leq T\left(1-m(\Omega)\right)^N+2\varepsilon.
    \end{align*}
    In particular, it follows that 
    \begin{align}\label{eq:AOmegadiscs}
        \left\{z\in\C: u_0(z)\geq  t_0 +\frac{4\varepsilon}{T}\right\}\subset A_\Omega\subset \left\{z\in\C: u_0(z)\geq t_0-\frac{4\varepsilon}{T}\right\}.
    \end{align}
    where $t_0=\left(1-m(\Omega) \right)^N$.

    \noindent\emph{Step 2: The set $B$.}
    Let $\mathcal T: A_\Omega\backslash\Omega\to\Omega\backslash A_\Omega$ be a transport map and let 
    \begin{align*}
        B=\{z\in A_\Omega\backslash\Omega: |\mathcal T(z)|^2>|z|^2+C_\Omega\gamma\},
    \end{align*}
    where $C_\Omega$ and $\gamma$ will be chosen later to ensure that
    \begin{align}\label{eq:uinB}
        u(z)-u(\mathcal T(z))\geq \gamma \quad \forall z\in B.
    \end{align}
    Combining this with the direct estimate
    \begin{align*}
        \int_B\big(u(z)-u(\mathcal T(z))\big)dm(z)\leq d(\Omega),
    \end{align*}
    one has that 
    \begin{align}\label{eq:m(B)}
        m(B)\leq \frac{d(\Omega)}{\gamma}\leq  \frac{\left(1-K_\Omega^{-1}\right)}{(N+1)\gamma} \delta (P,\Omega).
    \end{align}

    In order to obtain \eqref{eq:uinB}, we start observing that  \eqref{eq:u-Tu0} implies
    \begin{align}\label{eq:uinBpre}
    \begin{split}
        u(z)-u(\mathcal T(z))
        &\geq T|u_0(z)-u_0(\mathcal T(z))|-4\varepsilon
        \\
        &\geq Tu_0(z)\left(1-\left(\frac{1+|z|^2}{1+|\mathcal T(z)|^2}\right)^N\right)-4\varepsilon.
        \end{split}
    \end{align}
    
    On the one hand, by \eqref{eq:AOmegadiscs}, for any $z\in A_\Omega$, 
    \begin{align*}
        u_0(z)\geq t_0-\frac{4\varepsilon}{T}.
    \end{align*}
    Recalling that $T=1-\varepsilon^2$ and $t_0=\left(1-m(\Omega) \right)^N$, we use \eqref{eq:epsilonsmall} to finally infer
    \begin{align}\label{eq:Tu0bis}
        Tu_0(z)\geq \frac{1}{2}t_0.
    \end{align}

    On the other hand,  if  $z\in B$, it holds that 
    \begin{align*}
        1-\left(\frac{1+|z|^2}{1+|\mathcal T(z)|^2}\right)^N\geq 1-\frac{1+|z|^2}{1+|\mathcal T(z)|^2}=\frac{|\mathcal T(z)|^2-|z|^2}{1+|z|^2+\big(\mathcal T(z)|^2-|z|^2\big)}.
    \end{align*}
    If $|\mathcal T(z)|^2-|z|^2>1$, noticing that $\lambda\mapsto\frac{\lambda}{1+|z|^2+\lambda}$ is  increasing for $\lambda>0$, we have 
    \begin{align*}
        1-\left(\frac{1+|z|^2}{1+|\mathcal T(z)|^2}\right)^N
        \geq \frac{1}{2+|z|^2}\geq \frac{1}{2}\left(u_0(z)\right)^{1/N}.
    \end{align*}
    If on the contrary $|\mathcal T(z)|^2-|z|^2\leq 1$ (and  $|\mathcal T(z)|^2-|z|^2\geq C_\Omega \gamma$),
    \begin{align*}
        1-\left(\frac{1+|z|^2}{1+|\mathcal T(z)|^2}\right)^N
        \geq \frac{C_\Omega \gamma}{2+|z|^2}\geq \frac{C_\Omega \gamma}{2}\left(u_0(z)\right)^{1/N}.
    \end{align*}

    Combining \eqref{eq:uinBpre} with \eqref{eq:Tu0bis} and the last two estimates one gets
    \begin{align*}
        u(z)-u(\mathcal T(z))
        &\geq \frac{1}{8} \min\{1,C_\Omega\gamma\}t_0^{1+1/N}-4\varepsilon.
    \end{align*}
    Choosing $C_\Omega=40 t_0^{-(1+1/N)}$ and $\varepsilon\leq\gamma\leq C_0\big(K_\Omega \delta(P,\Omega)\big)^{1/2}$, \eqref{eq:uinB} holds provided $$C_\Omega\gamma\leq 40C_0K_\Omega^{3/2}\delta(P,\Omega)^{1/2}<1.$$
    Otherwise, the estimate \eqref{eq:goal} is immediate.

   \noindent \emph{Step 3: Estimate for $\mathcal A(\Omega, A_\Omega)$.}
   Notice that 
    \begin{align}\label{eq:AOmAOm}
        \mathcal A(\Omega, A_\Omega)=\frac{2 m(A_\Omega\backslash\Omega)}{m(\Omega)}
        =\frac{2}{m(\Omega)}\Big(m(B)+m\big((\Omega\backslash\mathcal T(B))\backslash A_\Omega\big)\Big).
    \end{align}
    In view of \eqref{eq:m(B)}, it remains estimating $m\big((\Omega\backslash\mathcal T(B))\backslash A_\Omega\big)$.
    
    The inclusions in  \eqref{eq:AOmegadiscs} together with 
    \begin{align*}
        \Omega\backslash \mathcal T(B)
        \subset\left\{ z\in\Omega\backslash A_\Omega: |z|^2\leq C_\Omega \gamma +|w|^2 \mbox { for some } w\in A_\Omega\right\}
    \end{align*}
    yield to 
    \begin{align*}
        (\Omega\backslash\mathcal T(B))\backslash A_\Omega
        &\subset \mathbb D_{R_1}\backslash \mathbb D_{R_2},
    \end{align*}
    where 
    \begin{align*}
        R_1^2 &= C_\Omega\gamma+\left(t_0-\frac{4\varepsilon}{T}\right)^{-1/N}-1,\\
        R_2^2 &=\left(t_0+\frac{4\varepsilon}{T}\right)^{-1/N}-1
    \end{align*}
    with $t_0=\left(1-m(\Omega) \right)^N$.
    Then, 
    \begin{align*}
        m\big((\Omega\backslash\mathcal T(B))\backslash A_\Omega\big)
        &\leq m(\mathbb D_{R_1})-m(\mathbb D_{R_2})=\frac{1}{1+R_2^2}-\frac{1}{1+R_1^2}\leq R_1^2-R_2^2.
    \end{align*}
    Taking into account that $\varepsilon<\frac{t_0}{50}<\frac{1}{50}$, so $T>\frac{1}{2}$, we have
    \begin{align*}
        m\big(\Omega\backslash\mathcal T(B))\backslash A_\Omega\big)
        &\leq C_\Omega\gamma +\frac{C}{N}t_0^{-(1+1/N)}\frac{\varepsilon}{T}
        \\
        &\leq C \left(C_\Omega\gamma + K_\Omega \varepsilon\right).
    \end{align*}
    Recalling that   $C_\Omega=CK_\Omega$ and $\varepsilon\leq\gamma$, it suffices to take $\gamma=C_0\big(K_\Omega \delta(P,\Omega)\big)^{1/2}$ to conclude from \eqref{eq:m(B)} and \eqref{eq:AOmAOm} that
    \begin{align}\label{eq:AOmAOmfinal}
    \begin{split}
        \mathcal A (\Omega, A_\Omega)
        &\leq \frac{C}{m(\Omega)} \left(
        \frac{K_\Omega-1}{K_\Omega^{3/2}}+K_\Omega^{3/2}\right)\delta(P,\Omega)^{1/2}
        \\&\leq C\frac{K_\Omega^{3/2}}{m(\Omega)}\delta(P,\Omega)^{1/2}.
    \end{split}
    \end{align}

    \noindent\emph{Step 4: Estimate for $\mathcal A(\Omega)$.}
    Let  $\Omega^*$ be the disc centered at the origin with $m(\Omega^*)=m(\Omega)$.
    Actually, $\Omega^*=\{z\in\C: u_0\geq t_0\}=\mathbb D_R$ with $R^2=t_0^{-1/N}-1$.
    Taking into account that
    \begin{align*}
        \Omega\backslash\Omega^*
        &\subset (\Omega\backslash A_\Omega)\cup(A_\Omega\backslash\Omega^*)
    \end{align*}
    we easily observe that
    \begin{align*}
        \mathcal A(\Omega)\leq \mathcal A(\Omega, \Omega^*)\leq \mathcal A (\Omega, A_\Omega)+\mathcal A (A_\Omega,\Omega^*).
    \end{align*}
    Recalling \eqref{eq:AOmegadiscs} and arguing as in Step 3, it follows that
    \begin{align*}
        \mathcal A (A_\Omega,\Omega^*)
        \leq \frac{1}{m(\Omega)}\left(\left(t_0-\frac{4\varepsilon}{T}\right)^{-1/N}-\left(t_0+\frac{4\varepsilon}{T}\right)^{-1/N}\right)\leq C \frac{K_\Omega^{3/2}}{m(\Omega)}\delta(P,\Omega)^{1/2}.
    \end{align*}
    This together with \eqref{eq:AOmAOmfinal} yields to the desired result \eqref{eq:goal}. 
    
\end{proof} 

\begin{proof}[Proof of Theorem~\ref{thm:Wehrl}]
    We follow the first proof of  \cite{FNT23}*{Theorem 3}. 
    Since $\Phi$ is convex, i.e. $\Phi'$ is non-decreasing, but non-linear, there exists $t_1, t_2$ such that $0<t_1<t_2<1$ and $\Phi'(t_1)<\Phi'(t_2)$. 
    
    Given $P\in\mathcal P_N$ with $\|P\|_{N}=1$ and let $t^*\in(0,T)$ be as in Lemma~\ref{lem:unique_t*}. Then 
    \begin{align*}
        \mathcal{S}&=\frac{S_\Phi(P)-S_\Phi(1)}{N+1}
        =\int_0^1 \Phi'(t)\big(\mu_0(t)-\mu(t)\big) dt\\
        &=\int_{0}^1\big(\Phi'(t)-\Phi'(t^*)\big)\big({\mu_0(t)-\mu(t)}\big) dt.
    \end{align*}
    If $t_1<t_2\leq t^*$, using the monotonicity of $\Phi'$, we have
    \begin{align*}
        \mathcal{S}&
        \geq \int_{0}^{t_1}\big(\Phi'(t)-\Phi'(t^*)\big)\big(\mu_0(t)-\mu(t)\big) dt\\
        &\geq \big(\Phi'(t_2)-\Phi'(t_1)\big)\int_{0}^{t_1}\big(\mu(t)-\mu_0(t)\big) dt.
    \end{align*}
    Similarly, if $t^*\leq t_1<t_2$, 
    \begin{align*}
        \mathcal{S}
        &\geq \big(\Phi'(t_2)-\Phi'(t_1)\big)\int_{t_2}^1\big(\mu_0(t)-\mu(t)\big) dt.
    \end{align*}
    Finally, if $t_1<t^*<t_2$, 
    \begin{align*}
        \mathcal{S}
        \geq &\big(\Phi'(t_2)-\Phi'(t^*)\big)\int_{t_2}^1\big(\mu_0(t)-\mu(t)\big) dt \\
        &\; +\big(\Phi'(t^*)-\Phi'(t_1)\big)\int_0^{t_1}\big(\mu(t)-\mu_0(t)\big) dt.
    \end{align*}

    Therefore, it remains to prove that for some fixed $t_0$, there exists a constant $C$ such that 
    \begin{align*}
        \int_0^{t_0}\big(\mu(t)-\mu_0(t)\big) dt&\geq C  \frac{D_N(P)^2}{N+1}\quad \mbox{ if } t_0<t^*,\\
        \int_{t_0}^1\big(\mu_0(t)-\mu(t)\big) dt&\geq C  \frac{D_N(P)^2}{N+1} \quad\mbox{ if } t_0> t^*.
    \end{align*}

    If $t_0<t^*<1$, using the last monotonicity result in Lemma~\ref{lem:on_mu}, we have
    \begin{align*}
          \int_0^{t_0}\big(\mu(t)-\mu_0(t)\big) dt
         \geq \left(\frac{t_0}{t^*}\right)^{1+1/N}\int_0^{t^*}\big(\mu(t)-\mu_0(t)\big) dt
         \geq t_0^{1+1/N} \int_{t^*}^{1}\big(\mu_0(t)-\mu(t)\big)\, dt.
    \end{align*}
    Applying  Lemma~\ref{lem:lower}, one finally obtains 
    \begin{align*}
        \int_{0}^{t_0}\big(\mu(t)-\mu_0(t)\big) \, dt\geq \frac{C}{N} t_0^{2}   (1-T).
    \end{align*}

    If $t_0\geq t^*$ we distinguish three cases depending on the position of $T$ with respect to $t_0$:

    \noindent
    \emph{Case 1: $t_0\geq T$.} Since $\mu(t)=0$ for $t>T$, 
     \begin{align*}
        \int_{t_0}^1 \big(\mu_0(t)-\mu(t)\big) \, dt
        &= \int_{t_0}^1 (1-t^{1/N}) \, dt
        =1-t_0-\frac{N}{N+1}(1-t_0^{1+1/N})\\
        &\geq \frac{(1-t_0)^2}{2N}.
    \end{align*}

    \noindent
    \emph{Case 2: $2(1-T) \ge 1-t_0$ and $t_0\leq T$.} Arguing as above,
    \begin{align*}
        \int_{t_0}^1 \big(\mu_0(t)-\mu(t)\big) \, dt
        &\geq \int_{T}^1 \big(\mu_0(t)-\mu(t)\big) \, dt= \int_{T}^1 (1-t^{1/N}) \, dt\\
        &\geq \frac{(1-T)^2}{2N}\geq \frac{1-t_0}{4N}(1-T).
    \end{align*}

    \noindent
    \emph{Case 3: $2(1-T) \le 1-t_0$.}
    This estimate directly implies that $t_0>T$.
    We notice that the  function $H(s) =  \int_s^1 \bigl(\mu_0(t)-\mu(t)\bigr)\, dt$ is concave in $(t^*,T)$ because we know that $\mu_0' -\mu'>0$ in that interval. 
    Thus, $\frac{H(t^*)-H(t_2)}{t_2-t^*} \le \frac{H(t_2)-H(T)}{T-t_2}$. Therefore,
    \[
    \int_{t^*}^1\bigl(\mu_0(t)-\mu(t)\bigr)\, dt \le \frac{T-t^*}{T-t_0} \int_{t_0}^1\bigl(\mu_0(t)-\mu(t)\bigr)\, dt
    \le \frac{2}{1-t_0} \int_{t_0}^1\bigl(\mu_0(t)-\mu(t)\bigr)\, dt.
    \]
    Applying  Lemma~\ref{lem:lower}, we finally obtain 
    \begin{align*}
        \int_{t_0}^1\big(\mu_0(t)-\mu(t)\big) \, dt\geq C \frac{1-t_0}{2N} (1-T).
    \end{align*}

Combining all the previous estimates with  Lemma ~\ref{lem:valuemin}, which implies
\begin{align*}
    D_N(P)^2\leq 2(1-T)\leq 1,
\end{align*}
the result follows. 
\end{proof}

\section{Quantitative estimates in the Bargmann-Fock space}
\label{sec:Fock}

Let $\mathcal F^2$ be the Bargmann-Fock space of entire functions $f(z)$  with
\begin{align*}
    \|f\|_{\mathcal F^2}^2=\int_\C|f(z)|^2 e^{-\pi |z|^2}dz<\infty
\end{align*}
and define the  Hermitian product 
\begin{align*}
    \langle f,g\rangle_{\mathcal F^2}=\int_\C f(z)\overline{g(z)}e^{-\pi z^2} dz,
\end{align*}
where $dz=dx dy$ for $z=x+iy\in\C$. The Bargmann-Fock space
$\mathcal F^2$ endowed with the previous product is a reproducing kernel Hilbert space with kernel $k(z,\zeta)=e^{z\bar\zeta}$.

In this section, we obtain  quantitative estimates for the concentration inequality and for a generalized Wehrl entropy bound as the limit when $N$
goes to infinity of Theorems~\ref{thm:concentration} and~\ref{thm:Wehrl}. 
In this way, we recover the results in \cite{GGRT} and \cite{FNT23}, respectively.

We start studying the limit as $N\to \infty$ of the quantities involve in the main results of Section~\ref{sec:intro}.
Given any polynomial $P$, we define the following rescaling
\begin{align*}
    P^N(z)=P \left(\sqrt{\frac{N}{\pi}}z\right).
\end{align*}

\begin{lemma}\label{lem:limFock1}
Let $P, Q\in\mathcal P_M$. 
Then 
\begin{align*}
    \lim_{N\to \infty} \langle P^N,Q^N\rangle_{N}=\langle P, Q\rangle_{\mathcal F^2},
\end{align*}
so in particular $\lim_{N\to\infty}\|P^N\|_{N}=\|P\|_{\mathcal F^2}$.
\end{lemma}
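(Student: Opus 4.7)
The plan is to reduce the statement to a finite linear-algebra calculation. Both $\langle\cdot,\cdot\rangle_N$ (for $N\ge M$) and $\langle\cdot,\cdot\rangle_{\mathcal F^2}$ are sesquilinear, and the rescaling $P\mapsto P^N$ is linear in $P$, so by expanding $P,Q\in\mathcal P_M$ as finite linear combinations of the monomials $z^n$, $0\le n\le M$, it suffices to prove convergence of the matrix entries
\[
\lim_{N\to\infty}\bigl\langle (z^n)^N,(z^k)^N\bigr\rangle_N \;=\; \langle z^n,z^k\rangle_{\mathcal F^2}, \qquad 0\le n,k\le M.
\]
Because the resulting sum over $n,k$ is finite, no exchange-of-limit-and-integral issue will arise afterwards.

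By rotational invariance of both weights (and of the measures $dm$ and $e^{-\pi|z|^2}\,dz$), both inner products vanish when $n\ne k$, so only the diagonal case $n=k$ has to be handled. The scaling gives $(z^n)^N(z)=(N/\pi)^{n/2} z^n$, so I just need to compute $\|z^n\|_N^2$. Passing to polar coordinates and recognizing a Beta integral yields, for $n\le N$,
\[
\|z^n\|_N^2 \;=\; (N+1)\int_0^\infty \frac{s^n}{(1+s)^{N+2}}\,ds \;=\; (N+1)\,B(n+1,N+1-n) \;=\; \binom{N}{n}^{-1},
\]
which is consistent with $\{e_n\}$ being orthonormal in $\mathcal P_N$. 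An analogous polar computation yields $\|z^n\|_{\mathcal F^2}^2 = n!/\pi^n$.

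Combining these, one gets
\[
\bigl\|(z^n)^N\bigr\|_N^2 \;=\; \frac{(N/\pi)^n}{\binom{N}{n}} \;=\; \frac{n!}{\pi^n}\cdot\frac{N^n}{N(N-1)\cdots(N-n+1)} \;\xrightarrow[N\to\infty]{}\; \frac{n!}{\pi^n} \;=\; \|z^n\|_{\mathcal F^2}^2,
\]
since for each fixed $n$ the ratio $N^n/[N(N-1)\cdots(N-n+1)]$ tends to $1$. There is no serious obstacle here: the punchline is that the finite-dimensionality of $\mathcal P_M$ converts an a priori analytic limit into the elementary combinatorial identity $\lim_{N\to\infty} N^n(N-n)!/N! = 1$, bypassing any dominated-convergence argument that a direct change of variables $w=\sqrt{N/\pi}\,z$ (paired with $(1+\pi|w|^2/N)^{N+2}\to e^{\pi|w|^2}$) would otherwise require.
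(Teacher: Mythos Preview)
Your proof is correct, but it follows a different route from the paper's. The paper performs the direct change of variables $w=\sqrt{N/\pi}\,z$ that you allude to at the end, obtaining
\[
\langle P^N,Q^N\rangle_N \;=\; \frac{N+1}{N}\int_{\C}\frac{P(w)\overline{Q(w)}}{\bigl(1+\tfrac{\pi|w|^2}{N}\bigr)^{N+2}}\,dw,
\]
and then passes to the limit by dominated convergence, using $(1+\pi|w|^2/N)^{N+2}\to e^{\pi|w|^2}$. Your argument instead exploits the finite dimensionality of $\mathcal P_M$ to reduce to the explicit orthonormal-basis identities $\|z^n\|_N^2=\binom{N}{n}^{-1}$ and $\|z^n\|_{\mathcal F^2}^2=n!/\pi^n$, together with rotational invariance to kill the off-diagonal entries; the limit then becomes the elementary fact $N^n(N-n)!/N!\to 1$. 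Your route is self-contained and avoids justifying the exchange of limit and integral, while the paper's approach is shorter and extends more readily to integrands that are not polynomials (which is relevant later in Section~\ref{sec:Fock}, e.g.\ for $S_{N,\Phi}$).
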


\begin{proof}
    For any $N\geq M$ and performing a suitable change of variables we have
    \begin{align*}
        \langle P^N,Q^N\rangle_{N}
        &=(N+1)\int_\C\frac{P^N(z)\overline{Q^N(z)}}{\pi(1+|z|^2)^{N+2}}dz\\
        &=\frac{N+1}{N}\int_\C \frac{P(z)\overline{Q(z)}}{\left(1+\frac{\pi|z|^2}{N}\right)^{N+2}}dz. 
    \end{align*}
    Therefore, by the dominated convergence theorem,
    \begin{align*}
        \lim_{N\to\infty}\langle P^N,Q^N\rangle_{N}
        &=\int_\C \lim_{N\to\infty}\frac{N+1}{N} \frac{P(z)\overline{Q(z)}}{\left(1+\frac{\pi|z|^2}{N}\right)^{N+2}}dz
        \\&=\int_\C P(z)\overline{Q(z)}e^{-\pi|z|^2}dz=\langle P, Q\rangle_{\mathcal F^2}.
    \end{align*}
\end{proof}

For any set $\Omega\subset\C$, we define the following rescaled sets 
\begin{align*}
    \Omega^N=\sqrt{\frac{\pi}{N}}\Omega.
\end{align*}
Moreover, $\mathcal A(\Omega)$ denotes the Fraenkel asymmetry associated with the Lebesgue measure given by
\begin{align*}
    \mathcal A(\Omega)=\inf\left\{\frac{2|\Omega\backslash\mathbb D_\rho(z)|}{|\Omega|}: |\mathbb D_\rho(z)|=|\Omega|, z\in\C\right\}.
\end{align*}

\begin{lemma}\label{lem:limFock2}
    Let $\Omega\subset\C$ be a measurable set of finite Lebesgue measure. Then 
    \begin{align*}
         \lim_{N\to\infty}\big(1-m(\Omega^N)\big)^{N+1}&= e^{-|\Omega|},\\
         \lim_{N\to\infty} \mathcal A_m (\Omega^N)=\mathcal A(\Omega).
    \end{align*}
\end{lemma}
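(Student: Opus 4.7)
The whole argument rests on one elementary fact: by the change of variables $w=\sqrt{\pi/N}\,z$,
\begin{equation*}
N\,m(E^N) \;=\; \int_E \frac{dz}{(1+\pi|z|^2/N)^2} \;\xrightarrow[N\to\infty]{}\; |E|
\end{equation*}
for every measurable $E\subset\C$ with $|E|<\infty$, by dominated convergence (the integrand is bounded by $1$ and tends pointwise to $1$). Applied to $E=\Omega$ this gives $m(\Omega^N)=|\Omega|/N+o(1/N)$, whence
\begin{equation*}
(N+1)\log\bigl(1-m(\Omega^N)\bigr) \;=\; -(N+1)\,m(\Omega^N)+O\bigl((N+1)\,m(\Omega^N)^2\bigr) \;\longrightarrow\; -|\Omega|,
\end{equation*}
giving the first limit.

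For the second limit, every Euclidean disc $\mathbb{D}_\rho(\zeta)$ can be written as $D^N$ with $D=\mathbb{D}_{R}(c)$, $R=\rho\sqrt{N/\pi}$, $c=\zeta\sqrt{N/\pi}$. Applying the displayed convergence to $\Omega\setminus D_N$, $D_N\setminus\Omega$ and $\Omega$, we obtain that whenever $R_N\to R_\infty$ and $c_N\to c_\infty$ with $D_\infty=\mathbb{D}_{R_\infty}(c_\infty)$,
\begin{equation*}
\mathcal A_m(\Omega^N,D_N^N) \;=\; \frac{N\,m((\Omega\setminus D_N)^N)+N\,m((D_N\setminus\Omega)^N)}{N\,m(\Omega^N)} \;\longrightarrow\; \frac{|\Omega\setminus D_\infty|+|D_\infty\setminus\Omega|}{|\Omega|},
\end{equation*}
and the constraint $m(D_N^N)=m(\Omega^N)$ passes in the limit to $|D_\infty|=|\Omega|$. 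To prove $\limsup \mathcal A_m(\Omega^N)\le \mathcal A(\Omega)$, fix a Euclidean disc $\mathbb{D}_{R^*}(c^*)$ realising the Fraenkel asymmetry of $\Omega$ and, for each $N$, let $D_N=\mathbb{D}_{R_N}(c^*)$ be the concentric disc satisfying $m(D_N^N)=m(\Omega^N)$; monotonicity and continuity of the integral in $R$ force $R_N\to R^*$, so the display equals $\mathcal A(\Omega)$.

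For the reverse inequality we pick an $m$-optimal chordal (hence Euclidean) disc $D_N^N$ for $\Omega^N$, whose existence follows from the compactness of the family of chordal discs of fixed $m$-measure. The main obstacle is to establish a priori bounds on the rescaled parameters $(R_N,c_N)$. Since $\mathcal A(\Omega)<2$, the upper bound just obtained forces $m(D_N^N\cap\Omega^N)\ge c\,m(\Omega^N)$ for some $c>0$, and rescaling yields $|D_N\cap\Omega|\gtrsim|\Omega|$; in particular, $c_N$ cannot escape the bulk of $\Omega$. If $R_N\to\infty$ along a subsequence, an explicit computation of $\int_{D_N}(1+\pi|z|^2/N)^{-2}\,dz$---which must tend to $|\Omega|<\infty$---produces a divergent lower bound, a contradiction. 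Thus every subsequence of $(R_N,c_N)$ has a convergent sub-subsequence, and the displayed limit together with $|D_\infty|=|\Omega|$ yields $\liminf_N \mathcal A_m(\Omega^N)\ge \mathcal A(\Omega)$.
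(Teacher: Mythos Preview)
Your treatment of the first limit and of the $\limsup$ inequality for the second limit is correct and follows the same route as the paper (the paper uses a truncation $\Omega_R=\Omega\cap\mathbb D_R$ and sandwich bounds rather than invoking dominated convergence directly, but this is cosmetic). You actually go further than the paper by addressing the $\liminf$ direction explicitly; the paper, after establishing the pointwise limit $\mathcal A_m(\Omega^N,\mathcal B_N)\to 2|\Omega\setminus B|/|\Omega|$, simply writes ``taking the infimum in $z\in\C$ the second claim is shown'', which strictly speaking only yields $\limsup_N\mathcal A_m(\Omega^N)\le\mathcal A(\Omega)$.

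That said, your compactness argument for $\liminf$ has a genuine gap in the a priori bounds on $(R_N,c_N)$. The step ``$|D_N\cap\Omega|\gtrsim|\Omega|$, hence $c_N$ cannot escape the bulk of $\Omega$'' does not give a bound on $c_N$ unless $R_N$ is already controlled: a very large disc with centre far away can still cover a fixed portion of $\Omega$. Conversely, the claim that $R_N\to\infty$ forces $\int_{D_N}(1+\pi|z|^2/N)^{-2}\,dz$ to diverge fails without first controlling $c_N$, since the integrand decays like $|z|^{-4}$ and a large disc sitting far from the origin can carry arbitrarily small mass. The two bounds must therefore be obtained jointly. One clean way: from $|D_N\cap\Omega|\ge c|\Omega|$ deduce $|c_N|\le R_N+R_0$ for a fixed $R_0$, and then split into the regimes $R_N=o(\sqrt N)$, $R_N\asymp\sqrt N$, and $R_N/\sqrt N\to\infty$. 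In the first two the rescaled disc $D_N^N$ stays near the origin with nondegenerate Euclidean radius, so $m(D_N^N)\to 0$ forces $R_N$ to remain bounded; in the third, $D_N$ contains roughly half of $\mathbb D_{\epsilon\sqrt N}(0)$, on which the integrand is bounded below by a constant, forcing the integral to grow like $N$. Once $R_N$ is bounded, the bound on $c_N$ follows from $|D_N\cap\Omega|>0$ exactly as you indicate, and the remainder of your subsequence argument is correct. (A minor additional point: you assume the Euclidean infimum defining $\mathcal A(\Omega)$ is attained; if not, work with a near-minimiser instead.)
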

\begin{proof}
    Let $R>0$ and denote by $\Omega_R = \Omega\cap \mathbb D_R$. 
   Since 
   \begin{align*}
       m(\Omega^N_R)=\frac{1}{N}\int_{\Omega_R} \left(1+\frac{\pi|z|^2}{N}\right)^{-2} dz,
   \end{align*}
   it holds
   \begin{align*}
       \left(1+\frac{\pi R^2}{N}\right)^{-2}\frac{|\Omega_R|}{N}\leq m(\Omega^N)\leq \frac{|\Omega|}{N}.
   \end{align*}
   Applying these estimates to bound $\big(1-m(\Omega^N)\big)^{N+1}$ from above and below and taking limits as $N\to\infty$ in both sides, we get
   \[e^{-|\Omega|} \le \liminf_{N\to\infty}  \big(1-m(\Omega^N)\big)^{N+1} \le \limsup_{N\to\infty} 
   \big(1-m(\Omega^N)\big)^{N+1} \le e^{-|\Omega_R|}\]
   Taking limits as $R\to\infty$, the first claim follows.

   For any $z\in\C$ let $B=\mathbb{D}_\rho(z)$ with $\pi \rho^2=|\Omega|$.
   Then $B^N=\mathbb D_{ \sqrt{\pi/N} \rho}(z_N)$, where $z_N=\sqrt{\frac{\pi}{N}}z$.
   In addition, let $\mathcal B_N=\mathbb D_{r_N}(z_N)$  such that $m(\mathcal B_N)=m(\Omega^N)$.
   
   Let
    \begin{align*}
        A_N&=\frac{m(\Omega^N\backslash B^N)+m(B^N\backslash \Omega^N)}{m(\Omega^N)},
    \end{align*}
    which looks like $\mathcal A_m(\Omega^N, B^N)$ except for the uncertainty about $m(\Omega^N)=m(B^N)$.
   Since $\mathcal B_N$ and $B^N$ are concentric discs,  one may observe that
    \begin{align*}
        \left|\mathcal A_m(\Omega^N, \mathcal B_N)-A_N\right|
        \leq\frac{|m(\mathcal B_N)-m(B^N)|}{m(\Omega^N)}=\left|1-\frac{m(B^N)}{m(\Omega^N)}\right|.
    \end{align*}
    Arguing as for the first claim, one gets $\frac{m(B^N)}{m(\Omega^N)}\to\frac{|B|}{|\Omega|}$ as $N\to\infty$ and hence,
   \begin{align*}
        \lim_{N\to\infty} &\left|\mathcal A_m(\Omega^N, \mathcal B_N)-A_N\right|=0,\\
       \lim_{N\to\infty} &A_N=\frac{2|\Omega\backslash B|}{|\Omega|}.
   \end{align*}

   Therefore
   \begin{align*}
        \lim_{N\to\infty} \mathcal A_m(\Omega^N, \mathcal B_N)=\frac{2|\Omega\backslash B|}{|\Omega|}.
   \end{align*}
   Taking the infimum in $z\in\C$ the second claim is shown. 
\end{proof}

In the Fock space, the concentration operator for any measurable set $\Omega$ is given by
\begin{align*}
    C_\Omega(f):=\frac{\int_\Omega |f(z)|^2 e^{-\pi|z|^2}dz}{\|f\|^2_{\mathcal F^2}}, \quad f\in\mathcal F^2.
\end{align*}

We also define the distance of any $f\in\mathcal F^2$ with ${\|f\|^2_{\mathcal F^2}}=1$ to the normalized reproducing kernels given by $\kappa_a(z)=e^{-\pi|a|^2/2} e^{\pi \bar a z}$ as
\begin{align*}
    D(f)&=\min\{\|f-e^{i\theta}f_a\|_{\mathcal F^2}: a\in\C, \theta \in [0,2\pi]\}.
\end{align*}

Now we obtain $C_\Omega(P)$ and $D(P)$ as the limit of $C_{N,\Omega}$ and $D_N$  for suitable rescaled polynomials and domains. 
\begin{lemma}\label{lem:limFock3}
Let $P\in\mathcal P_M$  with $\|P\|_{\mathcal F^2}=1$ and let $\Omega\subset\C$ be measurable
with finite Lebesgue measure.
Then 
\begin{align*}
    \lim_{N\to \infty} C_{N, \Omega^N} (P^N) &= C_\Omega(P),\\
    \lim_{N\to \infty} D_N(\hat P^N) &= D(P),\\
\end{align*}
where $\hat P^N(z)=\frac{P^N(z)}{\|P^N\|_{N}}$.
\end{lemma}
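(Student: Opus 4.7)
The plan is to reduce both claims to a single rescaling argument that turns the $N$-dependent weight $(1+|z|^2)^{-N}$ into the Gaussian $e^{-\pi|w|^2}$ in the limit.

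\emph{First limit.} Unfolding the definition of the concentration operator and performing the change of variable $w=\sqrt{N/\pi}\,z$ (exactly as in the proof of Lemma~\ref{lem:limFock1}), one obtains
\begin{align*}
C_{N,\Omega^N}(P^N)=\frac{1}{\|P^N\|_N^2}\cdot\frac{N+1}{N}\int_\Omega\frac{|P(w)|^2}{(1+\pi|w|^2/N)^{N+2}}\,dw.
\end{align*}
By Lemma~\ref{lem:limFock1}, the prefactor converges to $\|P\|_{\mathcal F^2}^{-2}=1$. The integrand tends pointwise to $|P(w)|^2 e^{-\pi|w|^2}$. A uniform-in-$N$ tail bound is obtained from a single binomial term: for $N>M=\deg P$, $(1+\pi|w|^2/N)^{N+2}\ge \binom{N+2}{M+1}(\pi|w|^2/N)^{M+1}\ge c|w|^{2M+2}$ with $c>0$ independent of $N$ (since $\binom{N+2}{M+1}/N^{M+1}\to 1/(M+1)!$). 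Hence $|P(w)|^2/(1+\pi|w|^2/N)^{N+2}\le C/|w|^2$ for $|w|$ large. Splitting $\Omega=(\Omega\cap\mathbb{D}_R)\cup(\Omega\setminus\mathbb{D}_R)$, using uniform convergence on the compact part and the bound $C|\Omega|/R^2$ on the tail, and then letting $R\to\infty$, we recover $\int_\Omega |P(w)|^2 e^{-\pi|w|^2}\,dw=C_\Omega(P)$.

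\emph{Second limit.} Lemma~\ref{lem:valuemin} gives $D_N(\hat P^N)^2=2(1-\sqrt{T_N})$ with $T_N=\sup_z|\hat P^N(z)|^2/(1+|z|^2)^N$. The same optimization in $(a,\theta)$ applied to $\|P-e^{i\theta}\kappa_a\|_{\mathcal F^2}^2=2\bigl(1-\Re(e^{-i\theta}P(a)e^{-\pi|a|^2/2})\bigr)$ yields $D(P)^2=2(1-\sqrt{T})$ with $T=\sup_w|P(w)|^2 e^{-\pi|w|^2}$. So it suffices to show $T_N\to T$. Rescaling as before,
\begin{align*}
T_N=\frac{1}{\|P^N\|_N^2}\,\sup_{w\in\C}\frac{|P(w)|^2}{(1+\pi|w|^2/N)^N}.
\end{align*}
From $(1+x/N)^N\le e^x$ the inner supremum is $\ge T$, which after dividing by $\|P^N\|_N^2\to 1$ yields $\liminf_N T_N\ge T$. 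For the matching upper bound, the same binomial lower bound $(1+\pi|w|^2/N)^N\ge c|w|^{2M+2}$ (valid for $N$ large depending only on $M$) forces $|P(w)|^2/(1+\pi|w|^2/N)^N\le C/|w|^2\to 0$ as $|w|\to\infty$, uniformly in large $N$. Thus the supremum is attained on a compact ball $\{|w|\le R\}$ independently of $N$; on that compact set $(1+\pi|w|^2/N)^N\to e^{\pi|w|^2}$ uniformly, giving $\limsup_N T_N\le T$. Combining both bounds, $T_N\to T$ and hence $D_N(\hat P^N)\to D(P)$.

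\emph{Main obstacle.} The technical heart of both parts is the uniform-in-$N$ control of the region $|w|\ge R$ despite the $N$-th power in the denominator. The key observation is that a single monomial from the binomial expansion already has degree $2(M+1)$, which beats $|P(w)|^2$ (degree $2M$) with a constant independent of $N$. Both the tail estimate underlying the dominated-convergence step and the uniform-in-$N$ compactness of the supremum follow from this one elementary inequality.
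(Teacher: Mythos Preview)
Your proof is correct and follows the same strategy as the paper: rescale via $w=\sqrt{N/\pi}\,z$, invoke Lemma~\ref{lem:limFock1} for the norm, and pass to the limit in the resulting integral and supremum. The only difference is in justifying the interchange of limit and supremum for $D_N$: the paper appeals tersely to the monotonicity of $N\mapsto(1+\pi|w|^2/N)^N$, whereas you use an explicit uniform-in-$N$ tail bound from a single binomial term to localize the supremum to a fixed compact set---a slightly more hands-on but equally valid route.
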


\begin{proof}
    By Lemma~\ref{lem:valuemin} and an analog result in the Fock space (see Lemma 2.5 in \cite{GGRT})
   \begin{align*}
       D_N(\hat P^N)^2&=2\left(1-\frac{1}{\|P^N\|_{N}}{\sup_{z\in\C}\frac{|P^N(z)|}{(1+|z|^2)^{N/2}}}\right)\\
       &=2\left(1-\frac{1}{\|P^N\|_{N}}{\sup_{z\in\C}\frac{|P(z)|}{\left(1+\frac{\pi|z|^2}{N}\right)^{N/2}}}\right),\\
       D(P)^2&=2\left(1-{\sup_{z\in\C}|P(z)|e^{-\pi|z|^2/2}}\right).
   \end{align*}
   
   Then, $\lim_{N\to\infty}D_N(\hat P^N)^2=D(P)^2$ by Lemma~\ref{lem:limFock1} and the monotonicity of the functions involved, which allows to interchange the order of the limit and the supremum.

   The limit for the concentration follows from Lemma~\ref{lem:limFock1} and
   \begin{align*}
      (N+1) \int_{\Omega^N} \frac{|P^N(z)|^2}{\pi(1+|z|^2)^{N+2}}dz=\frac{N+1}{N}\int_\Omega \frac{|P(z)|^2}{\left(1+\frac{\pi|z|^2}{N}\right)^{N+2}}dz
      \xrightarrow[N\to\infty]{} \int_\Omega |P(z)|^2 e^{-\pi|z|^2}dz.
   \end{align*}
\end{proof}

\begin{cor}\label{cor:concentration_Fock}
    There exists a constant $C>0$ such that for any measurable set $\Omega\subset\C$ with positive Lebesgue measure and any  $f\in\mathcal F^2$ with $\|f\|_{\mathcal F^2}=1$, there holds
    \begin{align*}
        C_\Omega(f)\leq \left(1-Ce^{-|\Omega|}D(f)^2\right)C_{\Omega^*}(1),
    \end{align*}
    where $\Omega^*$ is the disc centered at $z=0$ with $|\Omega|=|\Omega^*|$.
    Equivalently,
   \begin{align*}
       D(f)\leq \big(C^{-1} e^{|\Omega|}\delta(f,\Omega)\big)^{1/2},
    \end{align*}
    where
    \begin{align*}
    \delta (f,\Omega) &=1-\frac{C_\Omega(f)}{C_{\Omega^*}(1)}=1-\frac{\int_\Omega |f(z)|^2e^{-\pi|z|^2}dz}{1-e^{-|\Omega|}}.
    \end{align*}
 Moreover, 
    \begin{align*}
        \mathcal A(\Omega)\leq C\frac{e^{3|\Omega|/2}}{|\Omega|}\delta(P,\Omega)^{1/2},
    \end{align*}
\end{cor}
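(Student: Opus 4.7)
The plan is to obtain the corollary by applying Theorem~\ref{thm:concentration} and Proposition~\ref{prop:domains} to the rescaled data $\hat P^N = P^N/\|P^N\|_N \in \mathcal P_N$ and $\Omega^N = \sqrt{\pi/N}\,\Omega$, then passing to the limit $N\to\infty$ with the help of Lemmas~\ref{lem:limFock1}--\ref{lem:limFock3}, together with a density argument to pass from polynomials to general $f\in\mathcal F^2$.

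First I would fix a polynomial $P\in\mathcal P_M$ with $\|P\|_{\mathcal F^2}=1$ and a measurable $\Omega\subset\mathbb C$ of finite Lebesgue measure. For each $N\ge M$ apply Theorem~\ref{thm:concentration} to $\hat P^N$ and $\Omega^N$, giving
\begin{equation*}
C_{N,\Omega^N}(\hat P^N) \leq \bigl(1 - C(1-m(\Omega^N))^{N+1}\, D_N(\hat P^N)^2\bigr)\, C_{N,(\Omega^N)^*}(1).
\end{equation*}
A direct computation yields $C_{N,\Omega^*}(1)=1-(1-m(\Omega^*))^{N+1}$, so Lemma~\ref{lem:limFock2} gives $C_{N,(\Omega^N)^*}(1)\to 1-e^{-|\Omega|}=C_{\Omega^*}(1)$, while Lemma~\ref{lem:limFock3} gives $C_{N,\Omega^N}(\hat P^N)\to C_\Omega(P)$ and $D_N(\hat P^N)\to D(P)$. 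Passing to the limit yields the first displayed inequality for polynomials, and the equivalent form in terms of $\delta(f,\Omega)$ is algebra. To extend to general $f\in\mathcal F^2$ with $\|f\|_{\mathcal F^2}=1$, I would use density of polynomials in $\mathcal F^2$ and the fact that both $C_\Omega$ and $D$ are Lipschitz on the unit sphere of $\mathcal F^2$ (by Cauchy--Schwarz and the triangle inequality respectively), so the inequality passes along any sequence $P_k\to f$.

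For the Fraenkel asymmetry estimate the same scheme applies via Proposition~\ref{prop:domains}: by Lemma~\ref{lem:limFock2} one has $\mathcal A_m(\Omega^N)\to\mathcal A(\Omega)$ and $(1-m(\Omega^N))^{-3(N+1)/2}\to e^{3|\Omega|/2}$, while $\delta_N(\hat P^N,\Omega^N)\to\delta(f,\Omega)$ by Lemma~\ref{lem:limFock3}. The main obstacle is that $m(\Omega^N)\sim |\Omega|/N$, so the prefactor $1/m(\Omega^N)$ appearing in Proposition~\ref{prop:domains} diverges like $N/|\Omega|$ and the naive limit does not produce the claimed $1/|\Omega|$. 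To overcome this, I would revisit the proof of Proposition~\ref{prop:domains} and track where $1/m(\Omega)$ enters. In the initial dichotomy, the factor appears through the trivial bound $\mathcal A_m\le 1/m(\Omega)$, which in the Fock rescaling becomes $\mathcal A(\Omega)\le 2$ and is dominated by $Ce^{3|\Omega|/2}/|\Omega|$ in the regime where that matters. In Steps 3--4 the factor enters through Fraenkel-type ratios $m(\text{error})/m(\Omega)$, where both numerator and denominator scale as $1/N$ under the rescaling, so the ratio converges to the corresponding Lebesgue quotient with $1/|\Omega|$. An equivalent and cleaner route is simply to replay the proof of Proposition~\ref{prop:domains} in the Fock setting (with the Fock kernel and the Lebesgue Fraenkel asymmetry throughout), which reproduces the argument of \cite{GGRT} and delivers the stated constant $e^{3|\Omega|/2}/|\Omega|$.
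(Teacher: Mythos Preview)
For the concentration estimate your approach coincides with the paper's: apply Theorem~\ref{thm:concentration} to $\hat P^N$ on $\Omega^N$, pass to the limit via Lemmas~\ref{lem:limFock2}--\ref{lem:limFock3}, and extend by density.

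For the Fraenkel asymmetry you have actually been more careful than the paper. The paper's two-line proof simply invokes Proposition~\ref{prop:domains} together with the same limits, but as you correctly observe, this cannot work at the level of the final inequality: the prefactor $1/m(\Omega^N)\sim N/|\Omega|$ diverges, so the naive limit of the right-hand side is infinite. Your diagnosis is right and the paper glosses over this point. Of your two proposed fixes, the second---replaying the proof of Proposition~\ref{prop:domains} directly in $\mathcal F^2$---is exactly the argument of \cite{GGRT} and certainly works, though it abandons the ``recover Fock from polynomials'' philosophy of the section. Your first fix (tracking the $1/N$ scaling of both numerator and denominator in the Fraenkel ratios inside the proof of Proposition~\ref{prop:domains}) is the more honest one in this context, and it is plausible: the bounds on $m(B)$ and on $m\bigl((\Omega\setminus\mathcal T(B))\setminus A_\Omega\bigr)$ in Steps~2--3 are stated as absolute measure bounds, and under the rescaling they should indeed be $O(1/N)$ along with $m(\Omega^N)$. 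However, you have only sketched this; turning it into a proof requires checking that the quantities $C_\Omega\gamma$, $K_\Omega\varepsilon$, and the annulus widths appearing in those steps all scale correctly, which is routine but not written out.
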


\begin{proof}
    If $f$ is a polynomial, the estimates follow from the combination of Theorem~\ref{thm:concentration} and Proposition~\ref{prop:domains} with the limits in Lemmas~\ref{lem:limFock2} and~\ref{lem:limFock3}. By density, the general results hold. 
\end{proof}

Finally, we deal with the generalized Wehrl entropy in $\mathcal F^2$, given by
\begin{align*}
    S_\Phi(f)=-\int_\C\Phi\big(|f(z)|^2e^{-\pi|z|^2}\big)dz,
\end{align*}
where $\|f\|_{\mathcal F^2}=1$ and $\Phi:[0,1]\to\mathbb R$ is a convex, non-linear, continuous function with $\Phi(0)=0$.

\begin{lemma}\label{lem:limFock4}
Let $\Phi:[0,1]\to\mathbb R$ be a convex, non-linear, continuous function with $\Phi(0)=0$ and let $P\in\mathcal P_M$  with $\|P\|_{\mathcal F^2}=1$.
Then 
\begin{align*}
    \lim_{N\to \infty} S_{N,\Phi}(P^N) &= S_\Phi(P).
\end{align*}
\end{lemma}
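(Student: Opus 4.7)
The plan is to apply dominated convergence after the rescaling $z=\sqrt{\pi/N}\,w$. Arguing as at the end of the proof of Lemma~\ref{lem:limFock3}, this substitution turns the integral into
\begin{equation*}
    S_{N,\Phi}(P^N)=-\frac{N+1}{N}\int_\C \Phi\bigl(u_N(w)\bigr)\,\frac{dw}{(1+\pi|w|^2/N)^2},\qquad u_N(w):=\frac{|P(w)|^2}{\|P^N\|_N^2\,(1+\pi|w|^2/N)^N}.
\end{equation*}
Using $(1+\pi|w|^2/N)^N\to e^{\pi|w|^2}$ together with Lemma~\ref{lem:limFock1} (so that $\|P^N\|_N\to 1$), one has $u_N(w)\to u(w):=|P(w)|^2 e^{-\pi|w|^2}$ pointwise; continuity of $\Phi$ and $\frac{N+1}{N(1+\pi|w|^2/N)^2}\to 1$ then give pointwise convergence of the whole integrand to $\Phi(u(w))$, matching the Fock integrand.

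The substantive step is to produce a uniform integrable envelope. The key observation is that $N\mapsto(1+a/N)^N$ is non-decreasing for $a\ge 0$ (as one sees by differentiating $N\log(1+a/N)$), so that $(1+\pi|w|^2/N)^{-N}$ is non-increasing in $N$. Combined with the binomial lower bound $(1+a/N)^N\ge\binom{N}{k}(a/N)^k$ and $\|P^N\|_N\ge 1/2$ for $N$ sufficiently large, this yields the $N$-independent pointwise envelope
\begin{equation*}
    u_N(w)\le \tilde u(w):=\frac{4|P(w)|^2}{(1+\pi|w|^2/N_0)^{N_0}},\qquad N\ge N_0,
\end{equation*}
which decays like $|w|^{-2(N_0-\deg P)}$ at infinity. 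The convexity of $\Phi$ will then be used to convert this bound for $u_N$ into one for $\Phi(u_N)$: from $\Phi(0)=0$ and the chord inequality one has $\Phi(x)\le x\Phi(1)$ on $[0,1]$, while the non-decreasing character of $x\mapsto\Phi(x)/x$ gives $\Phi(x)\ge x\Phi'_+(0)$ when $\Phi'_+(0)$ is finite, whence $|\Phi(x)|\le C_\Phi\,x$ on $[0,1]$. Thus $|\Phi(u_N(w))|/(1+\pi|w|^2/N)^2\le C_\Phi\,\tilde u(w)$, which is integrable for $N_0>\deg P+1$, and dominated convergence concludes.

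The hard part is the degenerate case $\Phi'_+(0)=-\infty$ (for instance $\Phi(x)=x\log x$), in which no linear upper bound $|\Phi(x)|\le Cx$ is available on $[0,1]$. There I replace the linear envelope by the modulus-of-continuity estimate $|\Phi(u_N(w))|\le \omega_\Phi(\tilde u(w))$, where $\omega_\Phi$ is the modulus of continuity of $\Phi$ at the origin, and exploit the freedom in choosing the decay exponent of $\tilde u$: enlarging $N_0$ forces $\tilde u(w)$ to decay at any prescribed polynomial rate, which makes $\omega_\Phi\circ\tilde u$ integrable on $\C$ precisely in the regime in which $S_\Phi(P)$ is finite on the Fock side, and the rest of the argument is unchanged.
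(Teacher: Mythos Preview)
Your approach---rescale, then apply dominated convergence---is exactly the paper's, and you are in fact more explicit than the paper about the envelope: the paper just invokes ``continuity of $\Phi$ and the dominated convergence theorem'' without exhibiting any majorant. The case $\Phi'_+(0)>-\infty$, in which $|\Phi(x)|\le C_\Phi x$ on $[0,1]$, is handled correctly: the monotonicity of $N\mapsto(1+a/N)^N$ yields the $N$-free bound $u_N\le\tilde u$, and $C_\Phi\,\tilde u\in L^1(\C)$ once $N_0>\deg P+1$.

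The genuine gap is in the degenerate case $\Phi'_+(0)=-\infty$. Your assertion that enlarging $N_0$ makes $\omega_\Phi\circ\tilde u$ integrable ``precisely in the regime in which $S_\Phi(P)$ is finite'' is false. Take $\Phi(x)=-\bigl(\log\tfrac1x\bigr)^{-2}$ on a neighbourhood of $0$ (and extend affinely to the rest of $[0,1]$); this is convex, continuous, vanishes at $0$, and has $\Phi'_+(0)=-\infty$. Because $u(w)=|P(w)|^2e^{-\pi|w|^2}$ has Gaussian decay, $|\Phi(u(w))|\sim(\pi|w|^2)^{-2}$ at infinity and $S_\Phi(P)$ is finite. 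Yet $\tilde u(w)$ decays only polynomially, so $\omega_\Phi(\tilde u(w))\ge|\Phi(\tilde u(w))|\sim(\log|w|)^{-2}$, which is not in $L^1(\C)$ for \emph{any} choice of $N_0$. The obstruction is structural: every $N$-uniform pointwise majorant of $u_N$ has at best polynomial tails, whereas the finiteness of $S_\Phi(P)$ relies on the Gaussian tails of $u$, and a bound through $\omega_\Phi$ alone cannot exploit that. (To be fair, the paper's proof glosses over exactly the same point.)
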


\begin{proof}
    After a suitable change of variables, we obtain
    \begin{align*}
        S_{N,\Phi}(P_N)=-\int_\C \Phi\left(u_N(z)\right)\frac{1}{\left(1+\frac{\pi|z|^2}{N}\right)^2}dz,
    \end{align*}
    where 
    \begin{align*}
        u_N(z)=\frac{|P^N(z)|^2}{\|P^N\|_{N}^2\left(1+\frac{\pi|z|^2}{N}\right)^N}.
    \end{align*}
    Notice that $\lim_{N\to\infty} u_N(z)=u(z)=|P(z)|^2 e^{-\pi|z|^2}$. 
    Now we observe
    \begin{align*}
        \left|S_{N,\Phi}(P^N)-S_\Phi(P)\right|\leq \int_\C\frac{|\Phi(u_N)-\Phi(u)|}{\left(1+\frac{\pi|z|^2}{N}\right)^2}dz+\int_\C|\Phi(u)|\left(\frac{1}{\left(1+\frac{\pi|z|^2}{N}\right)^2}-e^{-\pi|z|^2}\right)dz.
    \end{align*}
    Using the continuity of $\Phi$ and the dominated convergence theorem, it follows that the right-hand side goes to $0$ as $N\to\infty$. 
\end{proof}

\begin{cor}\label{cor:entropy_Fock}
    Let $\Phi:[0,1]\to\mathbb R$ be a convex, non-linear, continuous function with $\Phi(0)=0$. Then there exists a constant $C>0$ such that for any $f\in\mathcal F^2$  with $\|f\|_{\mathcal F^2}=1$ it holds
    \begin{align*}
        D(f)^2\leq C\left(S_\Phi(f)-S_\Phi(1)\right).
    \end{align*}
\end{cor}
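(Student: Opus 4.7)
The plan is to mirror the proof of Corollary~\ref{cor:concentration_Fock}: first I would establish the bound for polynomials $P\in\mathcal P_M$ with $\|P\|_{\mathcal F^2}=1$ by taking the limit $N\to\infty$ of Theorem~\ref{thm:Wehrl} applied to suitable rescalings, and then extend to arbitrary $f\in\mathcal F^2$ by density of polynomials.

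For the polynomial case, given $P\in\mathcal P_M$ with $\|P\|_{\mathcal F^2}=1$, I would apply Theorem~\ref{thm:Wehrl} to $\hat P^N = P^N/\|P^N\|_N \in \mathcal P_N$ for every $N\ge M$, obtaining
\begin{align*}
    D_N(\hat P^N)^2 \le C\bigl(S_{N,\Phi}(\hat P^N)-S_{N,\Phi}(1)\bigr),
\end{align*}
with $C$ depending only on $\Phi$ and crucially \emph{independent} of $N$. The uniformity of $C$ is precisely what makes the passage to the limit possible. Lemma~\ref{lem:limFock3} gives $D_N(\hat P^N) \to D(P)$. For the entropy, Lemma~\ref{lem:limFock4} applied to the constant polynomial $1$ gives $S_{N,\Phi}(1)\to S_\Phi(1)$, and a slight variant of the argument in Lemma~\ref{lem:limFock4} (using $\|P^N\|_N\to 1$ from Lemma~\ref{lem:limFock1} and the continuity of $\Phi$) yields $S_{N,\Phi}(\hat P^N)\to S_\Phi(P)$. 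Passing to the limit then delivers the desired inequality for polynomials.

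For the density step I would approximate $f\in\mathcal F^2$, $\|f\|_{\mathcal F^2}=1$, by a sequence of normalized polynomials $P_k\to f$ in $\mathcal F^2$. The distance $D$ is continuous with respect to the $\mathcal F^2$ norm, so $D(P_k)\to D(f)$. The main subtlety is continuity of $S_\Phi$, and this is where I expect the main (minor) obstacle. Pointwise convergence of $|P_k(z)|^2e^{-\pi|z|^2}$ to $|f(z)|^2e^{-\pi|z|^2}$ follows from the reproducing kernel estimate, but applying dominated convergence requires an integrable dominant for $\Phi(|P_k(z)|^2 e^{-\pi |z|^2})$. This is resolved by exploiting that a convex function $\Phi$ on $[0,1]$ with $\Phi(0)=0$ satisfies $\Phi(t)/t$ non-decreasing, whence $\Phi(t)\le \Phi(1)\,t$ on $[0,1]$; this provides a linear majorant so that $|\Phi(|g|^2 e^{-\pi|z|^2})| \le \Phi(1)|g|^2 e^{-\pi|z|^2}$, and the standard Vitali/dominated convergence argument applied to the $\mathcal F^2$-convergent sequence $P_k$ produces $S_\Phi(P_k)\to S_\Phi(f)$. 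Combining the two steps yields the corollary.
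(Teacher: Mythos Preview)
Your approach is exactly the paper's: apply Theorem~\ref{thm:Wehrl} to $\hat P^N$, pass to the limit via Lemmas~\ref{lem:limFock3} and~\ref{lem:limFock4}, then invoke density. Note that no ``slight variant'' of Lemma~\ref{lem:limFock4} is needed: since $S_{N,\Phi}$ is defined with the built-in normalization $\hat P=P/\|P\|_N$, one has $S_{N,\Phi}(P^N)=S_{N,\Phi}(\hat P^N)$ automatically.

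There is one slip in your density justification. From $\Phi(t)/t$ non-decreasing you correctly deduce $\Phi(t)\le \Phi(1)\,t$, but this does \emph{not} yield $|\Phi(t)|\le \Phi(1)\,t$: take $\Phi(t)=t\log t$, which is convex and continuous on $[0,1]$ with $\Phi(0)=0$ and $\Phi(1)=0$, yet $\Phi<0$ on $(0,1)$. Thus your proposed majorant for $|\Phi(u_k)|$ fails and plain dominated convergence does not go through as written. The paper does not spell out this step either (its proof is a single sentence referring back to Corollary~\ref{cor:concentration_Fock}), so you are not behind the original; but be aware that the passage $S_\Phi(P_k)\to S_\Phi(f)$ is more delicate than you indicate, and in particular your linear bound only controls $\Phi$ from above, not $|\Phi|$.
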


\begin{proof}
    The result can be proven as Corollary~\ref{cor:concentration_Fock} using also Lemma~\ref{lem:limFock4}.
\end{proof}

\section{Sharpeness}
\label{sec:optimality}

In this section we study the sharpness of   Theorems~\ref{thm:concentration} and~\ref{thm:Wehrl} and Proposition~\ref{prop:domains}  regarding the powers of $\delta_N(P, \Omega)$ and the extra dependence on $m(\Omega)$. We can inherit this from the sharpness of the corresponding inequalities in the Fock space, which were proved in \cite{GGRT} and \cite{FNT23}.

Indeed, the factor $\delta_N(P,\Omega)^{1/2}$ cannot be replaced by 
$\delta_N(P,\Omega)^{\alpha}$ with $\alpha>\frac 12$ independent of $N$ in 
Theorem~\ref{thm:concentration} and Proposition~\ref{prop:domains}. Otherwise, 
arguing as in Corollary~\ref{cor:concentration_Fock}, we would obtain the same dependence 
in the case of Fock, contradicting  Corollary 6.2 in \cite{GGRT}. 
Similarly, in  Theorem~\ref{thm:concentration}, we could not substitute 
$1-m(\Omega)$ by $1-cm(\Omega)$ with $c<1$.

Furthermore, in \cite{FNT23} it is claimed that  in Corollary~\ref{cor:entropy_Fock} we cannot replace $D(f)^2$ by $D(f)^\alpha$ with $\alpha>2$. This can be seen taking $f$ as a small perturbation of $1$.
Since the computations are not present, we include here a direct proof of the optimality of Theorem~\ref{thm:Wehrl}.

\begin{prop}
There exists constants $C, C'>0$ such that for any $N\geq 2$ there  exist $p_\varepsilon\in\mathcal P_N$ with $\|p_\varepsilon\|_{N}=1$ such that for $\varepsilon$ small enough
\begin{align*}
    D_N(p_\varepsilon)\geq \frac{C}{N}\varepsilon^2,\\
    S_N(p_\varepsilon)-S_N(1)\leq \frac{C'}{N^2} \varepsilon^4.
\end{align*}
\end{prop}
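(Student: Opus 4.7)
The plan is to perturb $1=\kappa_{N,0}$ in the lowest direction orthogonal to the tangent space of the reproducing-kernel family at $1$. Since $\kappa_{N,a}=1+\sqrt{N}\,\bar a\,e_1+O(|a|^2)$, that tangent space equals $\C\cdot e_1$, so I would set
\[
p_\varepsilon \;=\; \frac{1+\eta\,e_2}{\sqrt{1+\eta^2}},\qquad \eta:=\frac{\varepsilon^2}{N}.
\]
Orthonormality of $\{e_n\}$ makes $\|p_\varepsilon\|_N=1$ automatic, and the scaling $\eta=\varepsilon^2/N$ is exactly the one that lets the two target rates saturate simultaneously.

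For the distance, Lemma~\ref{lem:valuemin} reduces the task to computing $T=\sup_z u(z)$ with $u(z)=|p_\varepsilon(z)|^2/(1+|z|^2)^N$. Writing $e_2(z)=c_N z^2$ with $c_N=\sqrt{\binom{N}{2}}$ and maximising over $\arg z$ at fixed $|z|=r$ reduces this to $T=\sup_{r\ge 0}(1+\eta c_N r^2)^2/[(1+\eta^2)(1+r^2)^N]$. A one-variable derivative check (in the spirit of Step~3 of Lemma~\ref{lemma:super-level}) shows this function is strictly decreasing in $r$ for $\eta$ small enough, so $T=1/(1+\eta^2)$. Then the algebraic identity
\[
2\left(1-\tfrac{1}{\sqrt{1+\eta^2}}\right)=\frac{2\eta^2}{\sqrt{1+\eta^2}\bigl(1+\sqrt{1+\eta^2}\bigr)}\ge\frac{\eta^2}{2}
\]
(valid for $\eta\le1$) gives $D_N(p_\varepsilon)\ge\eta/2=\varepsilon^2/(2N)$, as required.

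For the entropy the plan is a second-order perturbation computation. Set $u_0=(1+|z|^2)^{-N}$ and $w=u/u_0-1=|p_\varepsilon|^2-1$; decompose
$u\log u-u_0\log u_0=u_0(1+w)\log(1+w)+w\,u_0\log u_0$
and Taylor-expand $(1+w)\log(1+w)=w+w^2/2+O(w^3)$. Because $\|p_\varepsilon\|_N^2=\|1\|_N^2$, we have $\int u_0 w\,dm=0$, killing the linear piece; angular symmetry kills the $\Re(z^2)$ contribution to $\int w u_0\log u_0\,dm$. The surviving leading integrals are two explicit beta-function computations: direct polar integration gives $\int\Re(z^2)^2 u_0\,dm=1/[N(N-1)(N+1)]$, hence $\int u_0 w^2/2\,dm=\eta^2/(N+1)+O(\eta^3)$; and using $\log u_0=-N\log(1+|z|^2)$ together with $\int_1^\infty v^{-k}\log v\,dv=1/(k-1)^2$ gives $\int(c_N^2|z|^4-1)u_0\log u_0\,dm=-(2N-1)/(N^2-1)$, so $\int w u_0\log u_0\,dm=-(2N-1)\eta^2/(N^2-1)+O(\eta^3)$. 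Summing,
\[
\int\bigl(u\log u-u_0\log u_0\bigr)dm=\frac{\eta^2}{N+1}-\frac{(2N-1)\eta^2}{N^2-1}+O(\eta^3)=-\frac{N\eta^2}{N^2-1}+O(\eta^3),
\]
so $S_N(p_\varepsilon)-S_N(1)=N\eta^2/(N-1)+O(N\eta^3)\le 2\eta^2=2\varepsilon^4/N^2$ for $\varepsilon$ small.

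The main obstacle is the two beta-function evaluations and the bookkeeping of how the $N$-dependent constants combine: each integral individually is of size $\eta^2/N$, and the $(N+1)$-prefactor turns the (partially cancelling) sum into an $\Theta(\eta^2)$ entropy gap whose coefficient $N/(N-1)$ is uniformly bounded by $2$ for all $N\ge2$. The cubic remainders are uniformly negligible since $N\eta^3=\varepsilon^6/N^2$ is dominated by the leading $\eta^2=\varepsilon^4/N^2$ whenever $\varepsilon$ is small enough.
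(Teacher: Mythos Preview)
Your choice of perturbing along $e_2$ (orthogonal to the tangent direction $\C e_1$ of the kernel family) is different from the paper's, which perturbs along $e_1$ via $p_\varepsilon(z)=(1+\varepsilon z)/\sqrt{1+\varepsilon^2/N}$. Your choice buys a much cleaner distance computation: the supremum of $u$ is exactly at $z=0$, so $T=1/(1+\eta^2)$ with no expansion needed, whereas the paper has to locate a nontrivial maximiser and expand $T$ to fourth order. The leading entropy coefficient you obtain, $S_N(p_\eta)-S_N(1)=\frac{N}{N-1}\eta^2+\ldots$, is also correct.

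There is, however, a genuine gap in the entropy step for small $N$. Because $e_2$ has degree $2$, your $w=|p_\varepsilon|^2-1$ contains the term $\eta^2(c_N^2|z|^4-1)/(1+\eta^2)$, so $w$ grows like $|z|^4$ at infinity. Consequently the quadratic integral
\[
\int_{\C} u_0\,w^2\,dm \;\supset\; \frac{\eta^4 c_N^4}{(1+\eta^2)^2}\int_0^\infty \frac{t^4}{(1+t)^{N+2}}\,dt
\]
diverges for $N=2,3$, and the cubic remainder $\int u_0|w|^3\,dm$ diverges for $N\le 5$. Thus your identity $\int u_0(1+w)\log(1+w)\,dm=\tfrac12\int u_0 w^2\,dm+O(\eta^3)$ is not valid as written for these $N$; for larger $N$ it is correct but the $O(\eta^3)$ bound still needs a separate argument on the region where $|w|\gtrsim 1$ (the Taylor remainder $|R(w)|\lesssim|w|^3$ holds only for bounded $w$). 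The paper avoids both issues: its degree-one perturbation makes the analogue of $w$ grow only like $|z|^2$, and instead of a global Taylor expansion it computes $\int u_\varepsilon\log|1+\varepsilon z|^2\,dm$ by splitting into $\{|\varepsilon z|<1/2\}$, where the power series of $\log|1+\varepsilon z|$ converges and the mean-value property kills the odd terms, and a tail shown explicitly to be $o(\varepsilon^5)$ for every $N\ge 2$. Your argument can be repaired by an analogous splitting of the $z$-domain, or by computing $\partial_\eta^2 S_N(p_\eta)\big|_{\eta=0}$ directly (which is well defined and equals $2N/(N-1)$), thereby bypassing the divergent intermediate integrals.
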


\begin{proof}
 Consider the function 
$$p_\varepsilon(z) = \frac{1+\varepsilon z}{\sqrt{1+\varepsilon^2/ N}},$$ which
belongs to the space $\mathcal P_N$ and  has
norm 
$\|p_\varepsilon\|_{N} = 1$. We denote by 
$$u_\varepsilon(z) = \frac{|p_\varepsilon(z)|^2}{(1+|z|^2)^N}=\frac{1}{c_\varepsilon}\frac{|1+\varepsilon z|^2}{(1+|z|^2)^N}.$$

\noindent\emph{Step 1: Computation of $D_N(p_\varepsilon)$.} By Lemma~\ref{lem:valuemin}, $$D_N(p_\varepsilon)^2=2(1-\sqrt{T})=2(1-\sqrt{\sup_{z\in\C} u_\varepsilon(z)}).$$
It can be seen that the  maximum of $u_\varepsilon$ is attained at 
  \begin{align*}
     z_0=\frac{\sqrt{N^2+4\varepsilon^2(N-1)}-N}{2\varepsilon (N-1)}=\frac{\varepsilon}{N}+o(\varepsilon^2).
 \end{align*} 
Hence
 \begin{align*}
     T &= u_\varepsilon(z_0)=\frac{1}{c_\varepsilon}\frac{(1+\varepsilon z_0)^2}{(1+z_0^2)^N} 
     =1-\frac{N-1}{2N^3}\varepsilon^4+O(\varepsilon^6).
 \end{align*} 
 Therefore
 \begin{align}\label{eq:DPep}
     D_N(p_\varepsilon)^2=\frac{N-1}{2N^3}\varepsilon^4+O(\varepsilon^6).
 \end{align}

\noindent\emph{Step 2: Estimation of $S_N(p_\varepsilon)$.} 
We split the entropy in three terms as follows
\begin{align*}
   S_N(p_\varepsilon)&
   =-(N+1)\int_{\mathbb C} u_\varepsilon(z) \log (u_\varepsilon(z))  dm(z)\\
   &=(N+1)\int_{\mathbb C} u_\varepsilon(z)\left(N\log(1+|z|^2)+\log c_\varepsilon-\log |1+\varepsilon z|^2\right)dm(z)
   \\&=(N+1)(A_\varepsilon+B_\varepsilon-C_\varepsilon).
\end{align*}
The first integral is:
\[
\begin{split}
A_\varepsilon = & \frac{N}{c_\varepsilon} \int_{\C} |1+\varepsilon z|^2\log(1+|z|^2)\frac{dm(z)}{(1+|z|^2)^{N}} = 
      \frac{N}{c_\varepsilon} \int_{\C} (1+\varepsilon^2 |z|^2)\log(1+|z|^2)\frac{dm(z)}{(1+|z|^2)^{N}}  \\
=&\frac{N}{c_\varepsilon}\left(\frac{1}{(N+1)^2}+\varepsilon^2\left(\frac{1}{N^2}-\frac{1}{(N+1)^2}\right)\right)
\end{split}
\]
The second term is:
\[
 B_\varepsilon = \log c_\varepsilon\int_{\C} u_\varepsilon(z)dm(z)=\frac{\log(c_\varepsilon)}{N+1}.
\]
The third term is:
\begin{align*}
C_\varepsilon  
&=\int_{\C} u_\varepsilon(z)\log |1+\varepsilon z|^2
dm(z)\\
&=\int_{\varepsilon|z| < 1/2} u_\varepsilon(z)\log |1+\varepsilon z|^2
dm(z)+\int_{\varepsilon|z| > 1/2} u_\varepsilon(z)\log |1+\varepsilon z|^2
dm(z)
=C_{1,\varepsilon}+C_{2,\varepsilon}
\end{align*}
For the first part we have
\begin{align*}
C_{1,\varepsilon}=  &\frac{1}{c_\varepsilon}\int_{\varepsilon|z| < 1/2} (1+\varepsilon^2|z|^{2} + 2\varepsilon
\Re 
z)\log |1+\varepsilon z|^2 \frac{dm(z)}{(1+|z|^2)^{N}}
\\
=  &\frac{4\varepsilon}{c_\varepsilon}\int_{\varepsilon|z| < 1/2} \Re z\log |1+\varepsilon
z| \frac{dm(z)}{(1+|z|^2)^{N}}.
\end{align*}
The last equality holds because $v(z) = \log|1+\varepsilon z|^2$ is a harmonic
function in the disc $\varepsilon |z| < 1/2$, with $v(0) = 0$ and we use the mean
value property in each circle.

Since $\log |1+\varepsilon z| = \sum_{n= 1}^\infty \frac{(-1)^{n+1}\varepsilon^n}{n}\Re
(z^{n})$
uniformly in the disc $|\varepsilon z|< 1/2$, we integrate again in circles and we
get
\[
\begin{split}
 C_{1,\varepsilon} = &  \frac{4\varepsilon^2}{c_\varepsilon} \int_{\varepsilon|z| < 1/2}(\Re z)^2
\frac{dm(z)}{(1+|z|^2)^{N}}  \\
 = & \frac{2\varepsilon^2}{c_\varepsilon} \int_{\C}|z|^2 \frac{dm(z)}{(1+|z|^2)^{N}} -\frac{2\varepsilon^2}{c_\varepsilon} \int_{\varepsilon|z| > 1/2}|z|^2 \frac{dm(z)}{(1+|z|^2)^{N}} =
\frac{2\varepsilon^2}{N(N+1)c_\varepsilon}-C_{3,\varepsilon}
\end{split}
\]
Finally, we can see that both $C_{2,\varepsilon}$ and $C_{3,\varepsilon}$ decrease faster than $\varepsilon^4$ as  $\varepsilon\to 0$ for $N \geq 2$.
This is based on 
\begin{align*}
    \iota_M(\varepsilon)=
    \int_{\varepsilon|z|>1/2}\frac{1}{(1+|z|^2)^M}dz=c_M \varepsilon^{2(M-1)}\big(1+o(1)\big), \quad M\geq 1.
\end{align*}
Indeed, 
\begin{align*}
    C_{2,\varepsilon}&\leq \frac{C}{c_\varepsilon}\int_{\varepsilon|z|>1/2}\frac{|1+\varepsilon z|^4}{(1+|z|^2)^{N+2}}dz
    \leq  \frac{C}{c_\varepsilon}\varepsilon^4  \int_{\varepsilon|z|>1/2}\frac{|z|^4}{(1+|z|^2)^{N+2}}dz \leq  \frac{C}{c_\varepsilon}\varepsilon^4 \iota_{N}(\varepsilon) =o(\varepsilon^5),
    \\
    C_{3,\varepsilon}&\leq \frac{C}{c_\varepsilon} \varepsilon^2\int_{\varepsilon|z|>1/2}\frac{| z|^2}{(1+|z|^2)^{N+2}}dz = \frac{C}{c_\varepsilon} \varepsilon^2\iota_{N+1}(\varepsilon) =o(\varepsilon^5).
\end{align*}

Putting all together 
\begin{align*}
    S_N(p_\varepsilon)&=
 (N+1)(A_\varepsilon + B_\varepsilon - C_\varepsilon)\\
 &=\log(c_\varepsilon)+\frac{1}{c_\varepsilon}\left(\frac{N}{N+1}-\frac{\varepsilon^2}{N(N+1)}\right)+o(\varepsilon^4)\\
 &=\frac{N}{N+1}+\frac{1}{2 N^2}\varepsilon^4+o(\varepsilon^4)
\end{align*}
Therefore,
\begin{align}\label{eq:Spe}
    S_N(p_\varepsilon)-S_N(1)=\frac{1}{2 N^2}\varepsilon^4+o(\varepsilon^4).
\end{align}

The conclusion follows from \eqref{eq:DPep} and \eqref{eq:Spe}.

\end{proof}

\section{Stability of general operators}
\label{sec:generalop}

Theorem~\ref{thm:concentration}, Proposition~\ref{prop:domains} and Theorem~\ref{thm:Wehrl} remain valid when we consider general operators instead of simple projections to the space generated by some $P\in \mathcal P_N$.
Namely, let $\rho: \mathcal P_N\to \mathcal P_N$ be a positive-semidefinite operator such that $\Tr \rho=1$. 
Both the concentration operator in $\Omega$ and the (generalized) Wehrl entropy for $\rho$ can be defined as 
\begin{align*}
    C_{N, \Omega}[\rho]&=\frac{\int_\Omega u(z) dm(z)}{\int_\C u(z) dm(z)},
    \\
    S_{N,\Phi}[\rho]&=-(N+1)\int_\C \Phi\left(u(z)\right) dm(z),
\end{align*}
where 
\begin{align*}
    u(z)=\langle \kappa_{N,z}, \rho (\kappa_{N,z})\rangle_N
\end{align*} 
and $\Phi$ is as in Theorem~\ref{thm:Wehrl}.

Notice that if $\rho$ is the projection on $P\in\mathcal P_N$, with $\|P\|_N=1$, i.e.
$\rho(q)=\langle q,P\rangle_N P$, then both quantities agree with the ones defined in Section~\ref{sec:intro}, since
\begin{align*}
    u(z)=\big\langle \kappa_{N,z}, \langle \kappa_{N,z}, P\rangle_N P \rangle_N=|\langle P,\kappa_{N,z}\big\rangle_N |^2=\left|\frac{P(z)}{(1+|z|^2)^{N/2}}\right|^2
\end{align*}

The concentration $C_{N,\Omega}[\rho]$ achieves its maximum among all sets of measure $\ell$ and all operators as described above when $\Omega$ is a disc with $m(\Omega)=\ell$ and $\rho$ is the projection on the normalized reproducing kernel at the (chordal) center of the disc, i.e.
\[\rho(q)= \Pi_{\kappa_{N,a}}(q)= \langle q,\kappa_{N,a}\rangle_N\ \kappa_{N,a}=\frac{q(a)}{(1+|a|^2)^{N/2}}\kappa_{N,a}, \quad q\in\mathcal P_N.\]
In particular, this happens if $\Omega$ is the disc centered at the origin and $\rho=1$, understood as the identity operator.
In turn, the (generalized) Wehrl entropy attains its minimum  for the same kind of operators.

From Sections~\ref{sec:superlevelsets} and~\ref{sec:proofs}, with mild adaptations, 
we can estimate how close an operator is to the projections on the normalized reproducing kernels in terms of the distance of  its concentration or entropy to their critical values. 
In order to do so, we define the following distance for any positive-semidefinite operator $\rho$ with $\Tr \rho=1$:
\begin{align*}
    D_N[\rho]=\min\{\|\rho-\Pi_{\kappa_{N,a}}\|_1: a\in\C\},
\end{align*}
where $\|\rho\|_1=\Tr |\rho|$.

\begin{thm}\label{thm:concentration_generalop}
    There exists a constant $C>0$ such that for any measurable set $\Omega\subset\C$ with positive  measure and any  positive-semidefinite operator operator $\rho:\mathcal P_N\to \mathcal P_N$  with $\Tr \rho=1$, there holds
   \begin{align*}
       D_N[\rho]\leq \left(C \left(1-m(\Omega)\right)^{N+1}\delta_N[\rho,\Omega]\right)^{1/2},
    \end{align*}
    where
    \begin{align*}
    \delta_N [\rho,\Omega] &=1-\frac{C_\Omega[\rho]}{C_{\Omega^*}[1]}.
    \end{align*}
 Moreover, 
    \begin{align*}
        \mathcal A_m(\Omega)\leq C\frac{\left(1-m(\Omega)\right)^{-3(N+1)/2}}{m(\Omega)}\delta_N[\rho,\Omega]^{1/2},
    \end{align*}
\end{thm}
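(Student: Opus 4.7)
The plan is to adapt the pure-state proofs of Theorem~\ref{thm:concentration} and Proposition~\ref{prop:domains} to mixed states by working directly with the Husimi function $u(z) = \langle \kappa_{N,z}, \rho(\kappa_{N,z})\rangle_N$ in place of $|P(z)|^2/(1+|z|^2)^N$. Diagonalizing $\rho = \sum_{k=0}^N \lambda_k \Pi_{P_k}$ with $\{P_k\}$ orthonormal in $\mathcal P_N$, $\lambda_k\ge 0$, $\sum_k \lambda_k = 1$, one obtains
\begin{align*}
u(z) = \sum_k \lambda_k \frac{|P_k(z)|^2}{(1+|z|^2)^N} = \frac{F(z)}{(1+|z|^2)^N},\qquad F(z) = \sum_k \lambda_k |P_k(z)|^2,
\end{align*}
together with $u\le 1$, $(N+1)\int_\C u\,dm = 1$, and the linearity $\delta_N[\rho,\Omega] = \sum_k \lambda_k \delta_N(P_k,\Omega)$. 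A short Cauchy--Schwarz computation shows that $\log F$ is plurisubharmonic, so $v = \tfrac12\log u$ satisfies $\Delta_M v \ge -2\pi N$, which is the only input of Lemma~\ref{lem:on_mu}; that lemma therefore extends verbatim. The rigidity step in Lemma~\ref{lem:unique_t*} also extends: equality in Cauchy--Schwarz would force $\sqrt{\lambda_k}P_k' = c\sqrt{\lambda_k}P_k$ for a common constant $c$, which for polynomials forces $c=0$ and all $P_k$ with $\lambda_k>0$ to be constant, making $\rho=\Pi_1$ and contradicting $T<1$. Lemma~\ref{lem:upper} uses only the differential inequality from Lemma~\ref{lem:on_mu} and carries over unchanged.

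The critical step is extending Lemma~\ref{lemma:super-level}, whose pure-state proof is built on the decomposition $P=\sqrt T + \varepsilon Q$. Choosing coordinates so that $u(0)=T$, I would decompose each eigenvector as $P_k = \alpha_k\cdot 1 + \beta_k R_k$, with $R_k\perp 1$ in $\mathcal P_N$, $\|R_k\|_N = 1$, and $|\alpha_k|^2+|\beta_k|^2 = 1$. Then $T = \sum_k\lambda_k|\alpha_k|^2$ and $\sum_k\lambda_k|\beta_k|^2 = 1-T$, and
\begin{align*}
F(z) = T + 2\Re\Big(\sum_k\lambda_k \bar\alpha_k \beta_k R_k(z)\Big) + \sum_k\lambda_k|\beta_k|^2|R_k(z)|^2.
\end{align*}
The first sum plays the role of $\Re Q$: it is a harmonic polynomial of degree $\le N$ whose radial derivatives obey the same bounds as in Step~2 of the pure-state proof thanks to the binomial inequality~\eqref{eq:ineqbinomial}. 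The second sum plays the role of $\varepsilon^2|Q|^2$, its prefactor being exactly $1-T$, and is bounded termwise using~\eqref{eq:estimateQ^2_s}. The first-order vanishing $\partial_z u(0) = 0$ produces the analog of $M'(0)=0$ via the mean value property applied to each $R_k$. The star-shaped-domain analysis of Steps~3--5 of that proof then runs verbatim, recovering~\eqref{newestmu} with $N$-independent constants. Once this is in place, Lemma~\ref{lem:lower} follows without change.

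The operator analog of Lemma~\ref{lem:valuemin} is then immediate: letting $z^*$ be a maximizer of $u$ and using the standard formula $\|\Pi_{u_1}-\Pi_{u_2}\|_1 = 2\sqrt{1-|\langle u_1,u_2\rangle|^2}$ for rank-one projections, together with the triangle inequality for the trace norm and Cauchy--Schwarz,
\begin{align*}
D_N[\rho] \le \Big\|\sum_k \lambda_k\bigl(\Pi_{P_k}-\Pi_{\kappa_{N,z^*}}\bigr)\Big\|_1 \le 2\sum_k\lambda_k|\beta_k| \le 2\sqrt{1-T}.
\end{align*}
Combining the operator versions of Lemmas~\ref{lem:valuemin},~\ref{lem:lower}, and~\ref{lem:upper} exactly as in the proof of Theorem~\ref{thm:concentration} gives the bound on $D_N[\rho]^2$ in Theorem~\ref{thm:concentration_generalop}. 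The proof of Proposition~\ref{prop:domains} adapts line by line, the only additional ingredient being an analog of the pointwise bound~\eqref{eq:u-Tu0}, which in the mixed-state setting takes the form $|Tu_0(z)-u(z)|\le C\sqrt{1-T}$ and follows from the same decomposition by Cauchy--Schwarz and the extremal property of the reproducing kernel. The main obstacle throughout is the extension of Lemma~\ref{lemma:super-level}, where keeping the constants $N$-uniform for the convex combination $F$ in place of $|P|^2$ requires careful bookkeeping of the contributions of the $R_k$'s.
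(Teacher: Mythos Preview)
Your approach matches the paper's: same spectral decomposition of $\rho$, same lemma-by-lemma adaptation, same trace-norm triangle inequality yielding $D_N[\rho]\le 2\sqrt{1-T}$, and the same pointwise bound $|Tu_0-u|\le 2\sqrt{1-T}$ for the domain part. The paper proves subharmonicity of $\log F$ via the identity $\log\sum_j w_j|P_j|^2=\sup_{\theta_j}\log\bigl|\sum_j e^{i\theta_j}w_jP_j^2\bigr|$ (a supremum of subharmonic functions) rather than your Cauchy--Schwarz computation, but both are standard and equivalent.

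One point in your write-up is muddled and, as stated, leaves a gap in the adaptation of Lemma~\ref{lemma:super-level}. You say $\partial_z u(0)=0$ ``produces the analog of $M'(0)=0$''; but $M'(0)=0$ follows automatically from the mean value property, since the cross term $G(z)=\sum_k\lambda_k\bar\alpha_k\beta_k R_k(z)$ is analytic with $G(0)=0$. Where the first-order condition is actually needed is \emph{earlier}, to justify your claim that $\Re G$ ``obeys the same bounds as in Step~2'': those bounds---the factor $|z|^2$ in~\eqref{eq:estimateh} and the factor $r$ in~\eqref{eq:estimatehr}---require $G$ to have no linear term. This is precisely what $\partial_z u(0)=0$ gives, since $G'(0)=\sum_k\lambda_k\overline{P_k(0)}P_k'(0)=0$; the paper makes this explicit by writing the mixed-state analog $\tilde h$ as a sum over $n\ge 2$ using $\sum_j w_j p_{j,0}p_{j,1}=0$. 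Without killing the linear part, the positivity of $\partial_r g_\theta$ in~\eqref{eq:gr} would acquire a term of the wrong order in $r$ and Step~3 would not close. A smaller issue: in your rigidity argument for Lemma~\ref{lem:unique_t*}, equality in Cauchy--Schwarz on an open set does not force $c=0$; it forces all $P_k$ with $\lambda_k>0$ to share the same logarithmic derivative and hence be mutually proportional, whence orthonormality makes $\rho$ pure and the original lemma applies.
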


\begin{thm}\label{thm:entropy_generalop}
    Let $\Phi:[0,1]\to\mathbb R$ be a convex, non-linear, continuous function with $\Phi(0)=0$. Then there exists a constant $C>0$ such that for any  positive-semidefinite operator $\rho:\mathcal P_N\to \mathcal P_N$  with $\Tr \rho=1$ it holds
    \begin{align*}
        D_N[\rho]^2\leq C\left(S_{N,\Phi}[\rho]-S_{N,\Phi}[1]\right).
    \end{align*}
\end{thm}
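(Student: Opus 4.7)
The strategy is to imitate the proof of Theorem~\ref{thm:Wehrl}, working with the Berezin symbol $u(z) = \langle\kappa_{N,z}, \rho\,\kappa_{N,z}\rangle_N$ as the central object in place of the pure-state density $|\hat P(z)|^2/(1+|z|^2)^N$. The theorem will follow from the chain
\begin{align*}
D_N[\rho]^2 \leq 4(1-T) \leq C\bigl(S_{N,\Phi}[\rho] - S_{N,\Phi}[1]\bigr),
\end{align*}
where $T = \sup_{z\in\C} u(z) \in [0,1]$, and $T=1$ characterizes the projections onto normalized reproducing kernels.

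The first inequality, which replaces Lemma~\ref{lem:valuemin}, is the Fuchs--van de Graaf inequality between trace distance and fidelity: for any $a \in \C$,
\begin{align*}
\|\rho - \Pi_{\kappa_{N,a}}\|_1 \leq 2\sqrt{1 - \langle\kappa_{N,a}, \rho\,\kappa_{N,a}\rangle_N} = 2\sqrt{1-u(a)},
\end{align*}
and minimizing over $a$ gives $D_N[\rho] \leq 2\sqrt{1-T}$.

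For the second inequality, the plan is to extend the lemmas of Section~\ref{sec:superlevelsets} from pure-state densities to the Berezin symbol of an arbitrary density operator. Spectrally decomposing $\rho = \sum_k \lambda_k \Pi_{P_k}$ with $\{P_k\}$ orthonormal and $\sum_k \lambda_k = 1$ gives $u = \sum_k \lambda_k u_k$ with $u_k = |P_k|^2/(1+|z|^2)^N$; a direct Cauchy--Schwarz computation on $v = \tfrac{1}{2}\log u$ then yields $\Delta_M v \geq -2\pi N$ on $\{u > 0\}$, with equality in the rank-one case. This one-sided Laplacian bound still fits the framework of \cite{KNOCT}*{Theorem~1.1} and produces the differential inequality of Lemma~\ref{lem:on_mu}. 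Lemma~\ref{lem:unique_t*} then follows from the uniqueness of minimizers in Lieb--Solovej for arbitrary density operators (\cites{KNOCT, Frank23}).

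The hard part will be the analog of Lemma~\ref{lemma:super-level}, whose pure-state proof hinges on the additive decomposition $P = \sqrt{T} + \varepsilon Q$ with $\|Q\|_N = 1$, a structure unavailable for mixed states. The plan is to work instead at the matrix-element level: translating so that $u$ attains its supremum at $z=0$ forces $\rho_{00} = T$ and $\rho_{01} = 0$ in the orthonormal basis $\{e_n\}$, and the role of $\Re Q$ is played by the Berezin symbol $r(z) = \langle\kappa_{N,z}, (\rho - T\Pi_{\kappa_{N,0}})\kappa_{N,z}\rangle_N$, which is real-valued and vanishes to second order at $0$. Cauchy--Schwarz applied to the spectral decomposition of $\rho - T\Pi_{\kappa_{N,0}}$ provides analogues of the key estimates \eqref{eq:estimatehr}--\eqref{eq:estimatehrr}, with $\varepsilon^2$ replaced by a quantity of order $1-T$. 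Granted these, Steps 3--5 of the proof of Lemma~\ref{lemma:super-level} go through, Lemma~\ref{lem:lower} extends to the Berezin symbol, and the proof of Theorem~\ref{thm:Wehrl} transcribes essentially verbatim to give $1-T \leq C(S_{N,\Phi}[\rho] - S_{N,\Phi}[1])$, completing the argument.
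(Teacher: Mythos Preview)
Your overall architecture matches the paper's: extend the lemmas of Section~\ref{sec:superlevelsets} to the Berezin symbol $u(z)=\langle\kappa_{N,z},\rho\,\kappa_{N,z}\rangle_N$ and then rerun the argument of Theorem~\ref{thm:Wehrl}. The Fuchs--van de Graaf bound for $D_N[\rho]\le 2\sqrt{1-T}$ and the Cauchy--Schwarz verification of $\Delta_M v\ge -2\pi N$ are both valid and give clean alternatives to what the paper does (an explicit $2\times 2$ diagonalization followed by the triangle inequality, and a sup-of-subharmonic argument, respectively).

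The gap is in your proposed extension of Lemma~\ref{lemma:super-level}. The function $r(z)=u(z)-Tu_0(z)$ is \emph{not} harmonic, and Step~5 of that proof uses precisely the mean-value property of $h$ to obtain $M'(0)=0$; without it the Taylor expansion of $M(s)$ acquires an $O(\varepsilon)$ first-order term and the conclusion~\eqref{newestmu} is lost. Concretely, writing $(1+|z|^2)^N u(z)=\sum_{m,n}\rho_{mn}\,e_m(z)\overline{e_n(z)}$, the diagonal block $\sum_{n\ge1}\rho_{nn}|e_n(z)|^2$ contributes a nonzero radial term to $r$ whose angular average on $|z|=r_0$ equals $\sum_{n\ge1}\rho_{nn}\binom{N}{n}r_0^{2n}\ne0$. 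Derivative bounds on $r$ are not the issue; what is missing is a splitting of $r$ into a radial piece of size $\varepsilon^2=1-T$ and a genuinely harmonic oscillatory piece. The paper does exactly this at the coefficient level: from $\rho_{00}=T$ and $\rho_{01}=0$ one isolates the harmonic function $\tilde h(z)=\Re\sum_{n\ge2}\overline{\rho_{0n}}\,e_n(z)$ (the real part of a holomorphic polynomial built from the first row of the matrix of $\rho$), while the remaining block $\sum_{m,n\ge1}\rho_{mn}\,e_m\overline{e_n}=\sum_j w_j\bigl|P_j-p_{j,0}\bigr|^2$ is bounded pointwise by $(1-T)\bigl((1+|z|^2)^N-1\bigr)$ via Cauchy--Schwarz on each eigenfunction and absorbed into the $\varepsilon^2$ term, reproducing the pure-state inequality verbatim.
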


Once more, these two results give us the analog ones in the Fock space, obtained in \cite{FNT23}, when $N\to\infty$.

There is an interpretation of our results on concentration of 
operators and estimates of the Wehrl entropy in the formalism of quantum mechanics:

Given a collection of $P_j\in \mathcal P_N$ with $\|P_j\|_N=1$ (not 
necessarily pairwise orthogonal) and a sequence of weights $w_j$ with the 
property that $0\le w_j\le 1$ and $\sum_j w_j = 1$, we define the density 
operator $\rho: \mathcal P_N \to \mathcal P_N$ of a mixed state as
\[
 \rho(q) = \sum_j w_j \langle q, P_j\rangle_N P_j.  
\]
This representation is not unique and it defines a positive-semidefinite 
operator with $\Tr \rho = 1$. When the rank of $\rho$ is equal to one, i.e., 
$\rho(q) = \langle q, P\rangle_N P$ with $\|P\|= 1$, then $\rho$ is the density 
operator defining a pure state. If $P$ is a normalized reproducing kernel, we 
have a Bloch coherent state. Thus, our results provide a quantification of the fact 
that coherent states minimize the Wehrl entropy and maximize the concentration 
among all mixed states. 
For further details, the interested reader may consult for instance \cite{Sch22} and references therein.

\begin{proof}[Proofs of Theorem~\ref{thm:concentration_generalop} and~\ref{thm:entropy_generalop}]
    As we have announced, these proofs work as the corresponding ones in the case of polynomials instead of operators. Nevertheless, some adaptations must be done, which we collect below. 

First of all, we notice that since $\rho$ is a positive-semidefinite operator with $\Tr\rho=1$, then there exists an  orthonormal basis $\{P_j\}_{j=0}^N$ of $\mathcal P_N$  and $w_j\geq 0$ with $\sum_{j=0}^N w_j=1$ such that $\rho$ can be written as
\begin{align}\label{eq:rhoq}
    \rho(q)=\sum_{j=0}^N w_{j} \langle q, P_j\rangle_N P_j.
\end{align}
Let $J=\{j\in\{0,\dots, N\}: w_j\neq 0\}$.
Therefore,
\begin{align}\label{ref:u_generalop}
    u(z)&=\sum_{j\in J} w_j\langle P_j,\kappa_{N,z}\rangle_N \langle\kappa_{N,z}, P_j\rangle_N
    = \frac{\sum_{j\in J} w_j|P_j(z)|^2}{\left(1+|z|^2\right)^N}.
\end{align}

We now observe that most of Sections~\ref{sec:superlevelsets} and~\ref{sec:proofs} deal directly with  the function $\mu(t)$ and not with the particular expression for $u(z)$. 
Consequently, we only need to review those parts where the expression for $u(z)$ has a major role and verify whether the conclusions there do not differ from the ones in the simple case where $J$ has one single element.

\noindent\textit{Lemma~\ref{lemma:super-level}.}
The first point  where we have to work directly with $u(z)$ is in Lemma~\ref{lemma:super-level}, and more precisely, in the first two steps. 

\noindent\textit{Step 1:}
Let $P_j(z)=\sum_{n=0}^N p_{j,n} e_n(z)$, with $\sum_{n=0}^N |p_{j,n}|^2=1$. Without loss of generality, we assume $p_{j,0}$ is a non-negative, real number.
Arguing as above, we can estimate 
\begin{align}\label{eq:estimatePi}
    \begin{split}|P_j(z)|^2
    &\leq p_{j,0}^2+\left(\sum_{n=1}^N |p_{j,n}|^2\right)\left(\sum_{n=1}^N |e_n(z)|^2\right)+2 p_{j,0}\Re \left(\sum_{n=1}^N p_{j,n}e_n(z)\right)\\
    &\leq p_{j,0}^2+(1-p_{j,0}^2)\left((1+|z|^2)^N-1\right)+2 p_{j,0}\Re \left(\sum_{n=1}^N p_{j,n}e_n(z)\right).
    \end{split}
\end{align}

If we assume $u(z)$ attains its supremum at $z=0$ as in the simple case, we have
\begin{align*}
    \sum_{j\in J} w_j p_{j,0}^2&=T,\\
    \sum_{j\in J} w_j p_{j,0} p_{j,1}&=0.
\end{align*}
The second claim follows from the fact that $\partial_z u(0)=0$ implies $\sum_{j\in J} P_j(0) P_j'(0)=0$.

Combining the previous identities with \eqref{eq:estimatePi}, one gets
\begin{align*}
    \sum_{j\in J} w_j |P_j(z)|^2 &
    \leq T+ (1-T)\left((1+|z|^2)^N-1\right)
    + 2 \sum_{j\in J} w_j p_{j,0}\Re \left(\sum_{n=2}^N p_{j,n}e_n(z)\right).
\end{align*}

\noindent\textit{Step 2:}
Let $\tilde h(z)=\sum_{j\in J}  w_j p_{j,0}\Re \left(\sum_{n=2}^N p_{j,n}e_n(z)\right)$ and $\tilde Q_j(z)=\sum_{n=2}^N p_{j,n}e_n(z)$. 
By the Cauchy-Schwarz inequality, 
\begin{align*}
    |\tilde h(z)|^2 \leq \left(\sum_{j\in J}  w_j p_{j,0}^2\right)\left(\sum_{j\in J}  w_j \left|\sum_{n=2}^N p_{j,n}e_n(z)\right|^2\right)=T \sum_{j\in J} w_j |Q_j(z)|^2.
\end{align*}
We now note that  $|\tilde Q_j(z)|^2$ and their derivatives satisfy the previous estimates for $Q$ with an extra multiplicative term $1-p_{j,0}^2-|p_{j,1}|^2\leq 1-p_{j,0}^2$. Therefore, 
\begin{align*}
    |\tilde h(z)|^2 &\leq \sum_{j\in J} w_j (1-p_{j,0}^2) \frac{N^2}{2}|z|^4\left(1+|z|^2\right)^N\leq (1-T)\frac{N^2}{2}|z|^4\left(1+|z|^2\right)^N.
\end{align*}
Arguing similarly, we infer 
\begin{align*}
    |\partial_r \tilde h(r e^{i\theta})|&\leq \sqrt{1-T} Nr (1+r^2)^{N/2},\\
    |\partial_{rr} \tilde h(r e^{i\theta})|&\leq \sqrt{1-T} \sqrt{2}N^2r (1+r^2)^{N/2}.
\end{align*}

Defining $\varepsilon=\sqrt{1-T}$ and putting all together, we achieve:
\begin{align*}
    u(r e^{i\theta})\leq \frac{T-\varepsilon^2+2\sqrt{T}\varepsilon h(r e^{i\theta})}{(1+r^2)^N}+\varepsilon^2
\end{align*}
with $h$ a harmonic function satisfying \eqref{eq:estimateh},\eqref{eq:estimatehr} and \eqref{eq:estimatehrr}.
Therefore, from now on,  the proof of Lemma~\ref{lemma:super-level} can be continued as in Section~\ref{sec:superlevelsets}.

\noindent\textit{Lemma~\ref{lem:on_mu}:}
Here, the bound for $\Delta v$, where $v=\frac{1}{2} \log u$, plays a key role. We can see that the same bound is satisfied for $u$ as in \eqref{ref:u_generalop}. 

Notice that 
    $$v = \frac 12 \log u = \frac 12\log\left(\sum_{j\in J} w_j |P_j(z)|^2\right) - \frac N2 \log(1+|z|^2).$$
Then, $\Delta_M\log v \ge -2\pi N$ if and only if
    $\Delta \log\left(\sum_{j\in J} w_j |P_j(z)|^2\right) \ge 0$.
    But 
    $$\log \left(\sum_{j\in J} w_j |P_j(z)|^2\right) = \sup_{\theta_j\in [0, 2\pi] }
    \log  \left|\sum_{j\in J} e^{i\theta_j}w_j P_j(z)^2\right|,$$
    and the supremum of subharmonic functions is subharmonic, provided it is upper semi-continuous as in our situation. Thus, it has positive Laplacian.

\noindent\textit{Lemma~\ref{lem:valuemin}:} 
Slightly different from what the lemma states, now we have  $D_N[\rho]\leq  2\sqrt{1-T}$. Notice that despite the small difference in the right-hand side  and that the equality may not hold, the subsequent combination of lemmas and inequalities leads to the same conclusions.

The estimate can be proved  as \cite{Frank23}*{Lemma 6}.
Let $P\in \mathcal P_N$ with $\|P\|_N=1$ and consider $\rho=\langle \cdot, P\rangle_N P$.
Notice that for any $q\in\mathcal P_N$, $\rho(q)-\Pi_{\kappa_{N,a}}(q)\in \mathrm{span}\{P,\kappa_{N,a}\} $.  Therefore, we can restrict our attention to the subspace spanned by  $P$ and $\kappa_{N,a}$.  Working in the orthonormal basis $\{P, P^\perp\}$, the operator $\rho-\Pi_{\kappa_{N,a}}$ takes the form of the matrix
\begin{align*}
    \begin{pmatrix}
        1-|\langle P, \kappa_{N,a}\rangle_N|^2 
        & -\langle P, \kappa_{N,a}\rangle_N \langle \kappa_{N,a}, P^\perp\rangle_N
        \\ -\langle P^\perp, \kappa_{N,a}\rangle_N \langle \kappa_{N,a}, P^\perp\rangle_N
        & -|\langle P^\perp, \kappa_{N,a}\rangle_N|^2 
    \end{pmatrix}
\end{align*}
 This can be diagonalized so in the suitable basis $\rho-\Pi_{\kappa_{N,a}}$ can be expressed as a diagonal  matrix with diagonal elements $\pm |\langle P^\perp, \kappa_{N,a}\rangle_N|=\pm \sqrt{1-|\langle P, \kappa_{N,a}\rangle_N|^2}$.
Hence, 
\begin{align*}
    \|\rho-\Pi_{\kappa_{N,a}}\|_1=2\sqrt{1-\frac{|P(a)|^2}{\left(1+|a|^2\right)^N}}.
\end{align*}
Now, let us consider a general $\rho$ as in \eqref{eq:rhoq}. Then
\begin{align*}
    \|\rho-\Pi_{\kappa_{N,a}}\|_1
    &\leq \sum_{j\in J} w_j \|\langle \cdot, P_j \rangle_N P_j-\Pi_{\kappa_{N,a}}\|_1
    \leq  2 \sum_{j\in J} w_j \sqrt{1-\frac{|P_j(a)|^2}{\left(1+|a|^2\right)^N}}
    \\&\leq 2\sqrt{1-\frac{\sum_{j\in J}w_j|P_j(a)|^2}{\left(1+|a|^2\right)^N}} =2\sqrt{1-u(a)}.
\end{align*}
Therefore, 
\begin{align*}
    D_N[\rho]\leq \min\left\{2\sqrt{1-u(a)}, \ a\in\C\right\}=2\sqrt{1-T}.
\end{align*}

With the observations above and arguing as in Section~\ref{sec:proofs}, the first part of Theorem~\ref{thm:concentration_generalop} and Theorem~\ref{thm:entropy_generalop} follow. It remains to review the second part of Theorem~\ref{thm:concentration_generalop}, which seeks to reproduce Proposition~\ref{prop:domains}.

\noindent{\textit{Proposition~\ref{prop:domains}:}}
We only need to obtain an estimate like \eqref{eq:u-Tu0} with $\varepsilon=\sqrt{1-T}$.
Let $u$ and $P_j$ be as in the step about Lemma~\ref{lemma:super-level} at the beginning of this proof. Recall in particular that $\sum_{j\in J}w_j p_{j,0}^2=T$. Then
\begin{align*}
    Tu_0(z)-u(z)
    &=\sum_{j\in J}w_j\frac{p_{j,0}^2-|P_j(z)|^2}{\left(1+|z|^2\right)^N}
    \leq \sum_{j\in J}w_j\frac{|p_{j,0}-P_j(z)|}{\left(1+|z|^2\right)^{N/2}}
    \frac{p_{j,0}+|P_j(z)|}{\left(1+|z|^2\right)^{N/2}}
    \\&\leq 2\sqrt{T}\sum_{j\in J}\sqrt{w_j}
    \frac{|p_{j,0}-P_j(z)|}{\left(1+|z|^2\right)^{N/2}}
    \leq 2 \left(\sum_{j\in J}{w_j}
    \frac{|p_{j,0}-P_j(z)|^2}{\left(1+|z|^2\right)^{N}}\right)^{1/2}.
\end{align*}
We note  that
\begin{align*}
    \frac{|p_{j,0}-P_j(z)|^2}{\left(1+|z|^2\right)^{N}}\leq \|p_{j,0}-P_j\|_N^2=\sum_{n=1}^N|p_{j,n}|^2=1-p_{j,0}^2,
\end{align*}
and therefore,
\begin{align*}
    Tu_0(z)-u(z)
    &
    \leq 2 \left(\sum_{j\in J}{w_j}
    (1-p_{j,0}^2)\right)^{1/2}=2\sqrt{1-T}=2\varepsilon.
\end{align*}

With this, all the necessary modifications are done and the results follows.
\end{proof}

\section{The Schatten \texorpdfstring{$p$}{p}-norms of the localization operator} 
\label{sec:Schatten}

Given a set $\Omega\subset \C$ the localization operator 
$L_\Omega: \mathcal P_N \to \mathcal P_N$ is defined as
\[
L_\Omega[p](z) = (N+1)\int_{\Omega} \frac{(1+z\bar w)^N}{(1+|w|^2)^N} p(w)\, dm(w).
\]
Equivalently $L_\Omega[p] = \Pi(\chi_\Omega p)$, where $\Pi$ is the orthogonal projection of $L^2\Bigl(\frac{(N+1)dm(z)}{(1+|z|^2)^N}\Bigr)$ to its subspace $\mathcal P_N$. The integral kernel of the projection is given by $(1+z\bar w)^N$. It is clearly a positive self-adjoint operator with ordered eigenvalues $0 < \lambda_0\le \cdots\le \lambda_N$ and corresponding normalized eigenfunctions $\phi_0, \ldots, \phi_N$. Since the reproducing kernel can be obtained from any orthonormal basis, we have that $\sum_{i= 0}^n \phi_i(z)\overline{\phi_i(w)} = (1+z\bar w)^N$.

It is easily checked that the operator norm $\|L_\Omega\|= \lambda_N$ is given by
\[\|L_\Omega\| = \sup_{p\in \mathcal P_N\setminus\{0\}} C_{N, \Omega}(p).\]
We have already seen that among all sets $\Omega$ with a fixed measure $m(\Omega)$, the disc $\Omega^*$ is the one that gives rise to 
the biggest norm.

One could also consider other norms on the operator. For instance, the Schatten norm $\|L_\Omega\|_p = \left(\lambda_0^p+\cdots + \lambda_N^p\right)^{1/p}$, for $1\le p < \infty$ and $\|L_\Omega\|_\infty = \lambda_N$. When $p=1$, this is known as the trace class norm, for $p=2$ it is the Hilbert-Schmidt norm and for $p=\infty$, it is the operator norm.

We can ask what is the domain that maximizes the Schatten $p$-norm of the concentration operator among all sets with a fixed measure. The case $p=1$, the trace class, is particularly simple because
\begin{align*} 
\sum_{i=0}^N \lambda_i &= \sum_{i=0}^N \langle L_\Omega[\phi_i], \phi_i\rangle_N
\\&=(N+1)^2 \int_{\mathbb C}\sum_{i=0}^N \int_{\Omega} \frac{(1+z\bar w)^N}{(1+|w|^2)^N} \phi_i(w)\, dm(w)\overline{ \phi_i(z)} \frac{dm(z)}{(1+|z|^2)^N}\\
&= (N+1)^2\int_{\Omega} \int_{\mathbb C} \frac{|(1+z\bar w)|^{2N}}{(1+|w|^2)^N(1+|z|^2)^N}\, dm(z) dm(w)
\\&=(N+1) \int_{\Omega} dm(w) = (N+1)m(\Omega).\end{align*}
Thus, $\|L_\Omega\|_1 = (N+1) m(\Omega)$ and hence the norm does not depend on the shape of $\Omega$, only on its mass.

The case $p=2$, the Hilbert-Schmidt norm,  was considered in the context of the Fock space and the Bergman space in \cite{NicRic}. We see now that the same type of result holds in~$\mathcal P_N$. 
The Hilbert-Schmidt norm is given by 
\begin{align*}
\|L_\Omega\|_{HS}^2&=\|L_\Omega\|_2^2= \lambda_0^2+\cdots+\lambda_N^2 = \sum_{i= 0}^N \langle L_\Omega[\phi_i],L_\Omega[\phi_i]\rangle\\
&= (N+1)^3\int_{\mathbb C} \sum_{i=0}^N\int_{\Omega} \frac{(1+z\bar w)^N\phi_i(w)}{(1+|w|^2)^N} \, dm(w) \int_{\Omega} \frac{(1+\bar z \zeta)^N\overline{\phi_i(\zeta)}}{(1+|\zeta|^2)^N} \, dm(\zeta)\, \frac{dm(z)}{(1+|z|^2)^N}\\
&=(N+1)^2 \iint_{\Omega\times \Omega} |(1+w\bar \zeta)|^{2N}\, \frac{dm(\zeta)}{(1+|\zeta|^2)^N}\,\frac{dm(w)}{(1+|w|^2)^N}. 
\end{align*}

Let $d(w,\zeta) = \frac {|w-\zeta|}{\sqrt{\pi(1+|\zeta|^2)(1+|w|^2)}}$ be the chordal distance in the complex plane inherited from the stereographic projection from the North Pole in a sphere of radius $\frac{1}{2\sqrt{\pi}}$.
Then
\[\frac{|(1+\bar\zeta w)|^{2N}}{(1+|\zeta|^2)^N(1+|w|^2)^N} = \left(1-\pi d^2(\zeta,w)\right)^N = \varphi\left(d(\zeta,w)\right),\]
where $\varphi(t) = (1-\pi t^2)^N$ is a decreasing function in $\left[0,\frac{1}{\sqrt{\pi}}\right]$.

We use now the following spherical version of the Riesz rearrangement inequality, a proof of which can be found in \cite{Baernstein}*{Corollary 7.1}:
\begin{thm}\label{Riesz} Let $f$ and $g$ be nonnegative measurable functions on $\mathbb S^n$ and let $\varphi:\mathbb R^+\to \mathbb R^+$ be decreasing. Then
\[
\iint_{\mathbb S^n \times\mathbb S^n } f(x)g(y)\varphi(d(x,y)) d\sigma(x) d\sigma(y) \le \iint_{\mathbb S^n \times\mathbb S^n }  f^{\#}(x)g^{\#}(y)\varphi(d(x,y)) d\sigma(x) d\sigma(y),
\]
where $\sigma$ is the normalized Lebesgue measure in $\mathbb S^n$ and $f^{\#}$ is the symmetric decreasing rearrangement of $f$ in the sphere.
\end{thm}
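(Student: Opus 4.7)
The plan is to prove Theorem \ref{Riesz} by reducing to characteristic functions of sets via layer-cake decompositions and then establishing the reduced claim by two-point symmetrization on the sphere.

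By the layer-cake identity $f=\int_0^\infty \chi_{\{f>s\}}\,ds$ applied to both $f$ and $g$, together with Fubini, it suffices to prove the inequality when $f=\chi_A$ and $g=\chi_B$, noting that $(\chi_A)^{\#}=\chi_{A^*}$, where $A^*$ is the spherical cap about the north pole with $\sigma(A^*)=\sigma(A)$. Applying the layer-cake formula once more to the nonnegative decreasing function $\varphi$, writing $\varphi(t)=\int_0^\infty \chi_{[0,r(s)]}(t)\,ds$, further reduces the problem to proving the three-ball inequality
\begin{equation}\label{eq:threeball}
\iint_{\mathbb S^n\times\mathbb S^n}\chi_A(x)\chi_B(y)\chi_{\{d(x,y)\le r\}}\,d\sigma(x)\,d\sigma(y)\le \iint_{\mathbb S^n\times\mathbb S^n}\chi_{A^*}(x)\chi_{B^*}(y)\chi_{\{d(x,y)\le r\}}\,d\sigma(x)\,d\sigma(y)
\end{equation}
for every $r>0$ and every pair of measurable sets $A,B\subset\mathbb S^n$.

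I would then establish \eqref{eq:threeball} by two-point symmetrization. Given a hyperplane $H$ through the origin of $\mathbb R^{n+1}$ with a chosen half-space $H^+$ and reflection $\tau_H$, define the polarization $A^H$ so that, for each orbit $\{x,\tau_H x\}$ with $x\in H^+$, the point $x$ belongs to $A^H$ if either $x$ or $\tau_H x$ is in $A$, and $\tau_H x$ belongs to $A^H$ only if both are. This preserves $\sigma$-measure. The key geometric observation is that for $x,y\in H^+$ the chordal distances satisfy $d(x,y)=d(\tau_H x,\tau_H y)\le d(x,\tau_H y)=d(\tau_H x,y)$, so $\varphi(d(x,y))\ge \varphi(d(x,\tau_H y))$. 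Summing over each $\tau_H$-orbit of the pair $(x,y)$ and invoking the elementary rearrangement identity $(a_1-a_2)(b_1-b_2)(\varphi_1-\varphi_2)\ge 0$ for suitably ordered values, one obtains that simultaneous polarization of $A$ and $B$ by $H$ does not decrease the integral in \eqref{eq:threeball}.

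Finally, I would iterate polarizations along a countable family of hyperplanes containing the north--south axis that is dense in the relevant Grassmannian. By a standard compactness argument one extracts an $L^1$-limit $\chi_{A_\infty}$, and invariance of $A_\infty$ under every polarization of the family forces $A_\infty=A^*$; the analogous statement holds for $B$. Passing to the limit in \eqref{eq:threeball} then yields the claim. The main obstacle is this convergence step: choosing the sequence of hyperplanes appropriately and characterizing sets simultaneously invariant under a dense family of polarizations as spherical caps centered at the north pole. This is classical but technically delicate, and is the heart of \cite{Baernstein}*{Corollary 7.1}.
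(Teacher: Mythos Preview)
The paper does not supply its own proof of this theorem; it simply quotes the result and cites \cite{Baernstein}*{Corollary 7.1}. Your outline---layer-cake reduction to characteristic functions, a further reduction to the three-set inequality, and then two-point symmetrization (polarization) iterated to convergence---is exactly the classical argument behind that reference, and you correctly identify the convergence step as the delicate point. So your proposal is correct and matches the source the paper invokes.
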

\begin{rmk} The statement in \cite{Baernstein}*{Corollary 7.1} is written with  the geodesic distance in the sphere $d_{\mathbb S^2}$ instead of the chordal-arc distance $d$, but since $d = \frac{1}{\sqrt\pi}\sin \frac{d_{\mathbb S^2}}{2}$, then $d$ is an increasing function of $d_{\mathbb S^2}$, and both statements are equivalent. 
\end{rmk}

Since $dm$ is the push-forward measure of $d\sigma$ by the stereographic projection, we can write the Hilbert--Schmidt norm as
$$
\|L_\Omega\|_{HS}^2 = K \iint_{\mathbb S^2 \times\mathbb S^2 } \chi_{A}(x) \chi_{A}(y) \varphi(d(x,y))  d\sigma(x) d\sigma(y),
$$
where $A\subset \mathbb S^2$ is the preimage of $\Omega \subset \C$ by the stereographic projection, $\varphi(t) =  (1-t^2)^N$, and $d(x,y)$ is the chordal  distance from $x$ to $y$. 
An immediate consequence of Theorem~\ref{Riesz} is that 
$\|L_\Omega\|_{HS}^2 \le  \|L_{\Omega^*}\|_{HS}^2$, where $\Omega^*$ is a disc such that
$m(\Omega) = m (\Omega^*)$. 
Thus, the behavior of the $\|L_\Omega\|_p$ norm is the same when $p=2$ and $p=\infty$, while for $p=1$ the problem becomes trivial. Other $p$-Schatten norms are more difficult to analyze due to the lack of integral expressions.

It is also possible to study a quantitative version of such inequality, as in \cite{NicRic}, but this may be the content of future work.

\DefineSimpleKey{bib}{archiveprefix}{}

\BibSpec{arXiv}{
  +{}{\PrintAuthors}{author}
  +{,}{ \textit}{title}
  +{}{ \parenthesize}{date}
  +{,}{ arXiv }{eprint}
}

\begin{bibdiv}
\begin{biblist}

\bib{Baernstein}{book}{
   author={Baernstein, Albert, II},
   title={Symmetrization in analysis},
   series={New Mathematical Monographs},
   volume={36},
   note={With David Drasin and Richard S. Laugesen;
   With a foreword by Walter Hayman},
   publisher={Cambridge University Press, Cambridge},
   date={2019},
   pages={xviii+473},
   isbn={978-0-521-83047-8},
}

\bib{Bod05}{article}{
Author = {Bodmann, Bernhard G.},
 Title = {A lower bound for the {Wehrl} entropy of quantum spin with sharp high-spin asymptotics},
 Journal = {Commun. Math. Phys.},
 ISSN = {0010-3616},
 Volume = {250},
 Number = {2},
 Pages = {287--300},
 Year = {2004},
 }

\bib{Car91}{article}{
  title={Some integral identities and inequalities for entire functions and their application to the coherent state transform},
  author={Carlen, Eric A.},
  journal={J. Funct. Anal.},
  volume={97},
  number={1},
  pages={231--249},
  year={1991},
}

\bib{DonSta89}{article}{
  title={Uncertainty principles and signal recovery},
  author={Donoho, David L.},
  author={Stark,Philip B.},
  journal={SIAM J. Appl. Math.},
  volume={49},
  number={3},
  pages={906--931},
  year={1989},
  publisher={SIAM}
}

\bib{DonSta93}{article}{
  title={A note on rearrangements, spectral concentration, and the zero-order prolate spheroidal wavefunction},
  author={Donoho, David L.},
  author={Stark,Philip B.},
  journal={IEEE Trans. Inf. Theory},
  volume={39},
  number={1},
  pages={257--260},
  year={1993},
  publisher={}
}

\bib{Frank23}{article}{
url = {https://doi.org/10.1515/ans-2022-0050},
title = {Sharp inequalities for coherent states and their optimizers},
author = {Frank, Rupert L.},
pages = {20220050},
volume = {23},
number = {1},
journal = {Adv. Nonlinear Stud.},
year = {2023},
}

\bib{FNT23}{arXiv}{
  author={Frank, Rupert L.},
  author={Nicola, Fabio},
  author={Tilli, Paolo},
  title={The generalized Wehrl entropy bound in quantitative form},
  date={2023},
  eprint={2307.14089v3},
  archiveprefix={arXiv},
}

\bib{GGRT}{article}{
  author={G\'omez, Jaime},
  author={Guerra, Andr\'e},
  author={Ramos, Jo\~ao P.G.},
  author={Tilli, Paolo},
  title={Stability of the Faber-Krahn inequality for the short-time Fourier transform},
  journal={Invent. Math.},
  volume={236},
  pages={779-836},
  date={2024},
}

\bib{KNOCT}{arXiv}{
  author={Kulikov, Aleksei},
  author={Nicola, Fabio},
  author={Ortega-Cerd\`a, Joaquim},
  author={Tilli, Paolo},
  title={A monotonicity theorem for subharmonic functions on manifolds},
  date={2023},
  eprint={2212.14008v2},
  archiveprefix={arXiv},
}

\bib{Lieb78}{article}{
  author={Lieb, Elliott H.},
  title={Proof of an entropy conjecture of Wehrl},
   journal={Comm. Math. Phys},
   volume={62},
   date={1978},
   number={1},
   pages={35--41},
}

\bib{LiebSo14}{article}{
  author={Lieb, Elliott H.},
  author={Solovej,Jan Philip},
  title={Proof of an entropy conjecture for Bloch coherent spin states and its generalizations},
   journal={Acta Math},
   volume={212},
   date={2014},
   number={2},
   pages={379--398},
}

\bib{LiebSo16}{article}{
  Author = {Lieb, Elliott H.},
  Author={Solovej, Jan Philip},
 Title = {Proof of the {Wehrl}-type entropy conjecture for symmetric {{\({SU(N)}\)}} coherent states},
 Journal = {Commun. Math. Phys.},
 ISSN = {0010-3616},
 Volume = {348},
 Number = {2},
 Pages = {567--578},
 Year = {2016},
}

\bib{Luo00}{article}{
  author =  {Luo, Shunlong},
  title = {A simple proof of Wehrl’s conjecture on entropy},
  journal = {J. Phys. A},
  volume = {33},
  date ={2000},
 pages={3093--3096},
}

\bib{NicRic}{arXiv}{
      title={The quantitative isoperimetric inequality for the Hilbert--Schmidt norm of localization operators}, 
      author={Nicola, Fabio},
      author={Riccardi, Federico},
      year={2024},
      eprint={2401.04659},
      archivePrefix={arXiv},
}

\bib{NT22}{article}{
  author={Nicola, Fabio},
  author={Tilli, Paolo},
  title={The Faber–Krahn inequality for the short-time Fourier transform},
   journal={Invent. Math.},
   volume={230},
   date={2022},
   number={1},
   pages={1--30},
}

\bib{Oss78}{article}{
   author={Osserman, Robert},
   title={The isoperimetric inequality},
   journal={Bull. Amer. Math. Soc.},
   volume={84},
   date={1978},
   number={6},
   pages={1182--1238},
   issn={0002-9904},
}

\bib{Sch99}{article}{
Author = {Schupp, Peter},
 Title = {On {Lieb}'s conjecture for the {Wehrl} entropy of {Bloch} coherent states},
 Journal = {Commun. Math. Phys.},
 ISSN = {0010-3616},
 Volume = {207},
 Number = {2},
 Pages = {481--493},
 Year = {1999}
}

\bib{Sch22}{article}{
Author = {Schupp, Peter},
 Title = {Wehrl entropy, coherent states and quantum channels},
 BookTitle = {The physics and mathematics of Elliott Lieb. The 90th anniversary. Volume II. Berlin: European Mathematical Society (EMS)},
 ISBN = {978-3-98547-022-8; 978-3-98547-522-3; 978-3-98547-019-8; 978-3-98547-519-3},
 Pages = {329--344},
 Year = {2022},
 Publisher = {Berlin: European Mathematical Society (EMS)}
}

\bib{Wehrl}{article}{
title = {On the relation between classical and quantum-mechanical entropy},
journal = {Rep. Math. Phys.},
volume = {16},
number = {3},
pages = {353-358},
year = {1979},
issn = {0034-4877},
author = {Wehrl, Alfred},
}

\end{biblist}
\end{bibdiv}

\end{document}